\newtheorem{theorem}{Theorem}
\newtheorem{lemma}[theorem]{Lemma}
\newtheorem{prop}[theorem]{Proposition}
\newtheorem{remark}{Remark}
\newtheorem{corollary}[theorem]{Corollary}
\newtheorem{claim}{Claim}
\newenvironment{proof-sketch}{\noindent{\bf Sketch of Proof}\hspace*{1em}}{\qed\bigskip}
\newcommand{\RR}{\mathbb R}
\newcommand{\NN}{\mathbb N}
\newcommand{\di}{\displaystyle}
\renewcommand{\leq}{\leqslant}
\renewcommand{\geq}{\geqslant}
\begin{document}

\title[Perturbations of nonlinear eigenvalue problems]{Perturbations of nonlinear eigenvalue problems}

\author[N.S. Papageorgiou]{Nikolaos S. Papageorgiou}
\address[N.S. Papageorgiou]{National Technical University, Department of Mathematics,
				Zografou Campus, Athens 15780, Greece \& Institute of Mathematics, Physics and Mechanics, Jadranska 19, 1000 Ljubljana, Slovenia}
\email{\tt npapg@math.ntua.gr}

\author[V.D. R\u{a}dulescu]{Vicen\c{t}iu D. R\u{a}dulescu}
\address[V.D. R\u{a}dulescu]{Institute of Mathematics, Physics and Mechanics, Jadranska 19, 1000 Ljubljana, Slovenia \& Faculty of Applied Mathematics, AGH University of Science and Technology, al. Mickiewicza 30, 30-059 Krak\'ow, Poland \& Institute of Mathematics ``Simion Stoilow" of the Romanian Academy, P.O. Box 1-764,
          014700 Bucharest, Romania}
\email{\tt vicentiu.radulescu@imar.ro}

\author[D.D. Repov\v{s}]{Du\v{s}an D. Repov\v{s}}
\address[D.D. Repov\v{s}]{Institute of Mathematics, Physics and Mechanics, Jadranska 19, 1000 Ljubljana, Slovenia \& Faculty of Education and Faculty of Mathematics and Physics, University of Ljubljana, 1000 Ljubljana, Slovenia}
\email{\tt dusan.repovs@guest.arnes.si}

\keywords{Nonhomogeneous differential operator, sublinear and superlinear perturbation, nonlinear regularity, nonlinear maximum principle, comparison principle, minimal positive solution.\\
\phantom{aa} 2010 AMS Subject Classification: 35J20, 35J60}

\begin{abstract}
We consider perturbations of nonlinear eigenvalue problems driven by a nonhomogeneous differential operator plus an indefinite potential. We consider both sublinear and superlinear perturbations and we determine how the set of positive solutions changes as the real parameter $\lambda$ varies. We also show that there exists a minimal positive solution $\overline{u}_\lambda$ and determine the monotonicity and continuity properties of the map $\lambda\mapsto\overline{u}_\lambda$. Special attention is given to the special case of the $p$-Laplacian.
\end{abstract}

\maketitle

\section{Introduction}

The aim of this paper is to study the following nonlinear nonhomogeneous parametric Robin problem
\begin{equation}
	\left\{
		\begin{array}{ll}
			-{\rm div}\,a(Du(z)) + \xi(z)u(z)^{p-1} = \lambda u(z)^{p-1} + f(z,u(z))\ \mbox{in}\ \Omega, \\
			\frac{\partial u}{\partial n_a} + \beta(z)u^{p-1}=0\ \mbox{on}\ \partial\Omega,\ u>0, \ \lambda\in\RR,\ 1<p<\infty.
		\end{array}
	\right\}
	\tag{$P_\lambda$}\label{eqp}
\end{equation}

In this problem, $\Omega\subseteq\RR^N$ is a bounded domain with a $C^2$-boundary $\partial\Omega$. The map $a:\RR^N\rightarrow\RR^N$ involved in the differential operator, is continuous, strictly monotone (hence maximal monotone, too) and satisfies some other regularity and growth conditions listed in hypotheses $H(a)$ below (see Section 2). These extra-conditions on $a(\cdot)$ are not restrictive and so our framework incorporates many differential operators of interest such as the $p$-Laplacian and the $(p,q)$-Laplacian (that is, the sum of a $p$-Laplacian and a $q$-Laplacian). The potential function $\xi\in L^\infty(\Omega)$ is indefinite (that is, sign changing). In the reaction (the right-hand side of the equation), we have a parametric term $u\mapsto\lambda u^{p-1}$ and a perturbation $f(z,x)$ which is a Carath\'eodory function (that is, for all $x\in\RR$ the mapping $z\mapsto f(z,x)$ is measurable and for almost all $z\in\Omega$ the mapping $x\mapsto f(z,x)$ is continuous).

We consider two distinct cases. In the first one, $f(z,\cdot)$ is $(p-1)$-sublinear near $+\infty$, while in the second one we assume that $f(z,\cdot)$ is $(p-1)$-superlinear. In the boundary condition, $\frac{\partial u}{\partial n_a}$ denotes the conormal derivative of $u$, defined by extension of the map
$$
L^1(\overline{\Omega})\ni u\mapsto (a(Du),n)_{\RR^N},
$$
with $n(\cdot)$ being the outward unit normal on $\partial\Omega$. The boundary coefficient $\beta(\cdot)$ is non-negative and the case $\beta\equiv0$ is also included and corresponds to the Neumann problem.

We look for positive solutions of \eqref{eqp} and we want to determine how the set of positive solutions changes as the parameter $\lambda$ moves along
 the real line $\RR$. More precisely, we show that there is a critical parameter value $\lambda^*\in\RR$ such that for $\lambda<\lambda^*$ problem \eqref{eqp} has
\begin{itemize}
	\item at least one positive smooth solution, when $f(z,\cdot)$ is $(p-1)$-sublinear;
	\item at least two positive smooth solutions, when $f(z,\cdot)$ is $(p-1)$-superlinear.
\end{itemize}

For $\lambda\geq\lambda^*$, problem \eqref{eqp} has no positive solutions.

In the special case of the $p$-Laplace differential operator (that is, $a(y)=|y|^{p-2}y$ for all $y\in\RR^N$),  problem \eqref{eqp} can be viewed as a perturbation of the classical eigenvalue problem for the $p$-Laplacian. In this particular case, we can identify $\lambda^*$ as the principal eigenvalue $\hat\lambda_1$ of the differential operator $u\mapsto -\Delta_pu+\xi(z)|u|^{p-2}u$ with the Robin boundary condition. This was already observed by these authors for the semilinear problem (that is, $p=2$ hence $a(y)=y$ for all $y\in\RR^N$), see Papageorgiou, R\u{a}dulescu \& Repov\v{s} \cite{21}. Also, for both cases (sublinear and superlinear), we establish the existence of a smallest positive solution $\overline{u}_\lambda$ and determine the monotonicity and continuity properties of the map $\lambda\mapsto\overline{u}_\lambda$. Finally, in the sublinear case we address the question of uniqueness of the  solution.

Nonlinear nonhomogeneous parametric Robin problems were also studied by Autuori \& Pucci \cite{2}, Colasuonno, Pucci \& Varga \cite{5}, Fragnelli, Mugnai \& Papageorgiou \cite{7}, Papageorgiou, R\u{a}dulescu \& Repov\v{s} \cite{22, 22bis}, and Perera, Pucci \& Varga \cite{23}.

Our approach is variational, using results from the critical point theory and also truncation, perturbation and comparison techniques.

\section{Mathematical background and hypotheses}

Let $X$ be a Banach space and $X^*$ its topological dual. We denote by $\langle\cdot,\cdot\rangle$  the duality brackets for the pair $(X^*,X)$. Given $\varphi\in C^1(X,\RR)$, we say that $\varphi$ satisfies the ``Cerami condition" (the ``C-condition" for short), if the following property holds:
\begin{center}
``Every sequence $\{u_n\}_{n\geq 1}\subseteq X$ such that
$$\{\varphi(u_n)\}_{n\geq 1}\subseteq\RR\ \mbox{is bounded and}\
\mbox{$(1+||u_n||)\varphi'(u_n)\rightarrow 0$ in $X^*$ as $n\rightarrow\infty$,}$$
admits a strongly convergent subsequence."
\end{center}

In what follows, we denote  by $K_\varphi$ the critical set of $\varphi$, that is,
$$
K_\varphi=\{u\in X:\varphi'(u)=0\}.
$$

Also, if $c\in\RR$, then
$$
K^c_\varphi=\{u\in K_\varphi:\varphi(u)=c\}.
$$

Using the notion of the C-condition, we  have the following minimax theorem, known in the literature as the ``mountain pass theorem".

\begin{theorem}\label{th1}
If $\varphi\in C^1(X,\RR)$ satisfies the C-condition, $u_0,u_1\in X$, $||u_1-u_0||>\rho>0$,
	$$\max\{\varphi(u_0),\varphi(u_1)\}<\inf\{\varphi(u):||u-u_0||=\rho\}=m_{\rho}$$
	and $c=\inf\limits_{\gamma\in\Gamma}\max\limits_{0\leq t\leq 1}\ \varphi(\gamma(t))$ with $\Gamma=\{\gamma\in C([0,1],X):\gamma(0)=u_0,\gamma(1)=u_1\}$, then $c\geq m_{\rho}$ and $c$ is a critical value of $\varphi$ (that is, we can find $\hat{u}\in X$ such that $\varphi'(\hat{u})=0$ and $\varphi(\hat{u})=c$).
\end{theorem}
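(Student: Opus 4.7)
The proof naturally splits into two parts: the geometric inequality $c \ge m_\rho$ and the fact that $c$ is a critical value.

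For the first part, the plan is to use a topological separation argument. Fix any admissible path $\gamma \in \Gamma$ and consider the continuous function $t \mapsto \|\gamma(t) - u_0\|$. At $t=0$ this value is $0$ and at $t=1$ it equals $\|u_1 - u_0\| > \rho$, so by the intermediate value theorem there exists $t_\gamma \in (0,1)$ with $\|\gamma(t_\gamma) - u_0\| = \rho$. Therefore $\varphi(\gamma(t_\gamma)) \ge m_\rho$, which gives $\max_{t \in [0,1]} \varphi(\gamma(t)) \ge m_\rho$; taking infimum over $\gamma$ yields $c \ge m_\rho > \max\{\varphi(u_0),\varphi(u_1)\}$.

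For the second part, I would argue by contradiction: suppose $K_\varphi^c = \emptyset$. The core tool is a deformation lemma tailored to the Cerami condition, which asserts that under the C-condition and the assumption that $c$ is a regular value, for every $\bar{\varepsilon} > 0$ there exist $\varepsilon \in (0,\bar{\varepsilon})$ and a continuous deformation $\eta:[0,1]\times X \to X$ such that (i) $\eta(0,\cdot) = \mathrm{id}$, (ii) $\eta(s,u) = u$ whenever $|\varphi(u) - c| \ge \bar{\varepsilon}$, and (iii) $\eta(1, \{\varphi \le c+\varepsilon\}) \subseteq \{\varphi \le c - \varepsilon\}$. Choose $\bar{\varepsilon} < c - \max\{\varphi(u_0),\varphi(u_1)\}$ so that $u_0$ and $u_1$ sit outside the deformed zone and are therefore fixed by $\eta(s,\cdot)$. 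Pick $\gamma \in \Gamma$ with $\max_t \varphi(\gamma(t)) \le c + \varepsilon$, by the definition of $c$ as an infimum. Then $\tilde{\gamma}(t) := \eta(1,\gamma(t))$ is again in $\Gamma$ (its endpoints are preserved), yet $\max_t \varphi(\tilde{\gamma}(t)) \le c - \varepsilon$, contradicting the definition of $c$.

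The genuine technical work sits in the deformation lemma itself, and that is the step I expect to be the main obstacle. The standard construction uses a locally Lipschitz pseudo-gradient vector field $V: \{\varphi' \ne 0\}\to X$ with $\|V(u)\| \le 2\|\varphi'(u)\|_*$ and $\langle \varphi'(u), V(u)\rangle \ge \|\varphi'(u)\|_*^2$. Because we only have the weighted Cerami lower bound rather than the classical Palais--Smale one, the flow must be driven by the rescaled field $-\dfrac{1+\|u\|}{\|V(u)\|}V(u)$ (or an analogous rescaling), cut off by a Urysohn function that localizes the deformation to a shell $\{c - \bar{\varepsilon} \le \varphi \le c + \bar{\varepsilon}\}$. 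The Cerami condition, combined with the assumption $K_\varphi^c = \emptyset$, is exactly what guarantees that $\|\varphi'\|_*$ stays bounded away from zero (after the weighting) on an appropriate sublevel shell, so that the Cauchy problem for this vector field is globally well-posed on $[0,1]$ and moves points across the shell in a controlled way. Once the deformation $\eta$ is available with properties (i)--(iii), the contradiction above closes the argument.
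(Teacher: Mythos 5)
Your argument is correct: the intermediate value theorem step gives $c\geq m_{\rho}$, and the contradiction via a Cerami-adapted deformation lemma (with the pseudo-gradient field rescaled by the factor $1+\|u\|$ so that the weighted bound $(1+\|u\|)\|\varphi'(u)\|_{*}\geq\delta>0$ on a band around level $c$, which follows from the C-condition together with $K^{c}_{\varphi}=\emptyset$, keeps the flow well-posed) is the standard proof. Note that the paper does not prove Theorem \ref{th1} at all; it is quoted as a known background result from the critical point theory literature, so there is no in-paper argument to compare against --- your proposal is simply the classical proof that the cited result rests on.
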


In the analysis of problem \eqref{eqp}, we will use the Sobolev space $W^{1,p}(\Omega)$, the Banach space $C^1(\overline{\Omega})$ and the ``boundary" Lebesgue spaces $L^r(\partial\Omega), 1\leq r\leq\infty$. We denote by $||\cdot||$ the norm of $W^{1,p}(\Omega)$ defined by
$$
||u||=\left(||u||^p_p + ||Du||^p_p\right)^{1/p}\ \mbox{for all}\ u\in W^{1,p}(\Omega).
$$

The Banach space $C^1(\overline{\Omega})$ is an ordered Banach space with positive (order) cone $C_+=\{u\in C^1(\overline{\Omega}):u(z)\geq0\ \mbox{for all}\ z\in\overline{\Omega}\}$. This cone has a nonempty interior given by
$$
{\rm int}\,C_+=\left\{u\in C_+:u(z)>0\ \mbox{for all}\ z\in\Omega, \left.\frac{\partial u}{\partial n}\right|_{\partial\Omega\cap u^{-1}(0)}<0\right\}.
$$

Evidently, ${\rm int}\,C_+$ contains the open set $D_+$ defined by
$$
D_+=\{u\in C_+:u(z)>0\ \mbox{for all}\ z\in\overline{\Omega}\}.
$$

In fact, $D_+$ is the interior of $C_+$ when $C^1(\overline{\Omega})$ is furnished with the $C(\overline{\Omega})$-norm topology.

On $\partial\Omega$ we consider the $(N-1)$-dimensional Hausdorff (surface) measure $\sigma(\cdot)$. Using this measure on $\partial\Omega$, we can define in the usual way the ``boundary" Lebesgue spaces $L^r(\partial\Omega)$. From the theory of Sobolev spaces, we know that there exists a unique continuous linear map $\gamma_0:W^{1,p}(\Omega)\rightarrow L^p(\partial\Omega)$, known as the ``trace map", such that
$$
\gamma_0(u)=u|_{\partial\Omega}\ \mbox{for all}\ u\in W^{1,p}(\Omega)\cap C(\overline{\Omega}).
$$

So, the trace map extends the notion of boundary value to all Sobolev functions. The map $\gamma_0(\cdot)$ is compact into $L^r(\partial\Omega)$ for all $r\in\left[1,(N-1)p/(N-p)\right)$ when $p<N$, and into $L^r(\partial\Omega)$ for all $1\leq r<\infty$ when $p\geq\NN$. Also, we have
$$
{\rm im}\,\gamma_0=W^{\frac{1}{p},p}(\partial\Omega)\left(\frac{1}{p}+\frac{1}{p'}=1\right)\ \mbox{and}\ {\rm ker}\,\gamma_0=W^{1,p}_0(\Omega).
$$

In what follows, for the sake of notational simplicity, we drop the use of the trace map $\gamma_0(\cdot)$. All restrictions of Sobolev functions on $\partial\Omega$, are understood in the sense of traces.

Let $\vartheta\in C^1(0,\infty)$ and assume that it satisfies the following growth conditions:
\begin{equation}\label{eq1}
	0<\hat{c}\leq\frac{\vartheta'(t)t}{\vartheta(t)}\leq c_0\ \mbox{and}\ c_1t^{p-1}\leq\vartheta(t)\leq c_2(t^{\tau-1}+t^{p-1})\ \mbox{for all}\ t>0,
\end{equation}
with $0<c_1,c_2$ and $1\leq\tau<p$.

The hypotheses on the map $a(\cdot)$ involved in the differential operator of \eqref{eqp}, are as follows:

\smallskip
$H(a):$ $a(y)=a_0(|y|)y$ for all $y\in\RR^N$ with $a_0(t)>0$ for all $t>0$ and
\begin{itemize}
	\item [(i)] $a_0\in C^1(0,\infty)$, $t\mapsto a_0(t)t$ is strictly increasing on $(0,\infty), a_0(t)t\rightarrow0^+$ as $t\rightarrow0^+$ and $\lim_{t\rightarrow0^+}\frac{a'_0(t)t}{a_0(t)}>-1$;
	\item [(ii)] $|\nabla a(y)|\leq c_3\frac{\vartheta(|y|)}{|y|}$ for all $y\in\RR^N\backslash\{0\}$, and for some $c_3>0$;
	\item [(iii)] $(\nabla a(y)\xi,\xi)_{\RR^N}\geq\frac{\vartheta(|y|)}{|y|}|\xi|^2$ for all $y\in\RR^N\backslash\{0\}$, $\xi\in\RR^N$;
	\item [(iv)] if $G_0(t)=\int^t_0a_0(s)sds$ then there exists $q\in(1,p]$ such that $t\mapsto G_0(t^{{1}/{q}})$ is convex, $\lim_{t\rightarrow0^+}\frac{qG_0(t)}{t^q}=\tilde{c}>0$ and $0\leq pG_0(t)-a_0(t)t^2$ for all $t\geq0$.
\end{itemize}

\begin{remark}
	Hypotheses $H(a)(i)(ii)(iii)$ permit the use of the nonlinear regularity theory of Lieberman \cite{12} and of the nonlinear maximum principle of Pucci \& Serrin \cite{24}. Hypothesis $H(a)(iv)$ serves the needs of our problem. It is a mild condition which is satisfied in all cases of interest (see the examples below). These hypotheses imply that $G_0(\cdot)$ is strictly increasing and strictly convex. We set $G(y)=G_0(|y|)$ for all $y\in\RR^N$. We have
	$$
	\nabla G(y)=G_0'(|y|)\frac{y}{|y|}=a_0(|y|)y=a(y)\ \mbox{for all}\ y\in\RR^N, \nabla G(0)=0.
	$$

So, $G_0(\cdot)$ is the primitive of $a(\cdot)$ and $G(\cdot)$ is convex, $G(0)=0$. Therefore
\begin{equation}\label{eq2}
	G(y)\leq(a(y),y)_{\RR^N}\ \mbox{for all}\ y\in\RR^N.
\end{equation}
\end{remark}

The next lemma summarizes the main properties of the map $a(\cdot)$. It is an easy consequence of hypotheses $H(a)(i),\,(ii),\,(iii)$.

\begin{lemma}\label{lem2}
	If hypotheses $H(a)(i)(ii)(iii)$ hold, then
	\begin{itemize}
		\item [(a)] $y\mapsto a(y)$ is continuous and strictly monotone (thus
		 maximal monotone, too);
		\item [(b)] $|a(y)|\leq c_4\left(1+|y|^{p-1}\right)$ for all $y\in\RR^N$, and for some $c_4>0$;
		\item [(c)] $(a(y),y)_{\RR^N}\geq\frac{c_1}{p-1}|y|^p$ for all $y\in\RR^N$.
	\end{itemize}
\end{lemma}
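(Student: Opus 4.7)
The plan is to handle the three assertions in order, using the fundamental theorem of calculus applied to $t\mapsto a(ty)$ and $t\mapsto a(y_2+t(y_1-y_2))$ as the unifying tool.

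For (a), I would first observe that $a$ is $C^1$ on $\RR^N\setminus\{0\}$ since $a_0\in C^1(0,\infty)$ by $H(a)(i)$, while continuity at the origin follows from $a_0(t)t\to 0^+$ as $t\to 0^+$, which forces $|a(y)|=a_0(|y|)|y|\to 0$ and hence $a(0)=0$. For strict monotonicity, I would write
$$(a(y_1)-a(y_2),y_1-y_2)_{\RR^N}=\int_0^1(\nabla a(y_2+t(y_1-y_2))(y_1-y_2),y_1-y_2)_{\RR^N}\,dt$$
and apply $H(a)(iii)$: outside the at most single $t\in[0,1]$ at which $y_2+t(y_1-y_2)=0$, the integrand is bounded below by $(\vartheta(|\cdot|)/|\cdot|)|y_1-y_2|^2>0$. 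Maximal monotonicity is then automatic, since any everywhere-defined continuous monotone operator on the finite-dimensional space $\RR^N$ is automatically maximal monotone.

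For (b) and (c), the unifying representation is $a(y)=\int_0^1\nabla a(ty)y\,dt$, valid because $a(0)=0$. For (b), I would bound $|a(y)|\le c_3\int_0^1\vartheta(t|y|)/t\,dt$ using $H(a)(ii)$, substitute the upper estimate $\vartheta(s)\le c_2(s^{\tau-1}+s^{p-1})$ from \eqref{eq1}, compute the elementary integrals, and then absorb the $|y|^{\tau-1}$ contribution into $c_4(1+|y|^{p-1})$ by splitting on $|y|\le 1$ versus $|y|\ge 1$ (using $1\le\tau<p$). For (c), taking the inner product of the representation with $y$ and applying $H(a)(iii)$ with $\xi=y$, together with the lower bound $\vartheta(s)\ge c_1 s^{p-1}$ from \eqref{eq1}, yields
$$(a(y),y)_{\RR^N}\ge|y|\int_0^1\frac{\vartheta(t|y|)}{t}\,dt\ge c_1|y|\int_0^1\frac{(t|y|)^{p-1}}{t}\,dt=\frac{c_1}{p-1}|y|^p,$$
which is precisely where the factor $1/(p-1)$ arises.

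The argument is essentially routine integration; the only mildly delicate point is the borderline case $\tau=1$ in (b), for which the integral $\int_0^1 t^{\tau-2}\,dt$ formally diverges at $0$. This is handled by using, in place of $t^{\tau-1}$ on a neighbourhood of $0$, the sharper estimate $\vartheta(t)\le\vartheta(1)t^{\hat c}$ for $t\in(0,1]$, which follows by integrating the left inequality $\vartheta'(t)t/\vartheta(t)\ge\hat c$ in \eqref{eq1}; since $\hat c>0$, the resulting integrand $t^{\hat c-1}$ is integrable near $0$ and the conclusion is preserved.
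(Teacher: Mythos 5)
Your proof is correct, and it fills in exactly the standard argument that the paper itself omits (the paper merely asserts that Lemma \ref{lem2} ``is an easy consequence of hypotheses $H(a)(i),(ii),(iii)$'' and gives no proof). The representation $a(y)=\int_0^1\nabla a(ty)y\,dt$ together with $H(a)(ii)$, $H(a)(iii)$ and the two-sided bounds \eqref{eq1} on $\vartheta$ is precisely the route taken in the cited sources, and your treatment of the borderline case $\tau=1$ via $\vartheta(t)\leq\vartheta(1)t^{\hat c}$ is the right fix; note only that the same integrability remark is also what justifies the fundamental theorem of calculus along a segment through the origin in your monotonicity argument for (a), so it is worth invoking there as well.
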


Using this lemma and relation (\ref{eq2}), we obtain the following growth properties for the primitive $G(\cdot)$

\begin{corollary}\label{cor3}
	If hypotheses $H(a)(i),\,(ii),\,(iii)$ hold, then $\frac{c_1}{p(p-1)}|y|^p\leq G(y)\leq c_5(1+|y|^p)$ for all $y\in\RR^N$, and for some $c_5>0$.
\end{corollary}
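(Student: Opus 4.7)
The plan is to obtain the two bounds separately, using Lemma \ref{lem2} and inequality (\ref{eq2}) as the only inputs.

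For the lower bound, I will unpack Lemma \ref{lem2}(c) into a pointwise inequality on $a_0$. Since $(a(y),y)_{\RR^N}=a_0(|y|)|y|^2$, Lemma \ref{lem2}(c) reads $a_0(t)t^2\geq\frac{c_1}{p-1}t^p$ for all $t>0$, i.e.\ $a_0(t)t\geq\frac{c_1}{p-1}t^{p-1}$. Then by $H(a)(iv)$, $G_0(t)=\int_0^t a_0(s)s\,ds$, so integrating this pointwise inequality over $[0,|y|]$ yields
$$G(y)=G_0(|y|)\geq\frac{c_1}{p-1}\int_0^{|y|}s^{p-1}\,ds=\frac{c_1}{p(p-1)}|y|^p.$$

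For the upper bound, I will combine inequality (\ref{eq2}) with Lemma \ref{lem2}(b). From (\ref{eq2}) we have $G(y)\leq(a(y),y)_{\RR^N}=a_0(|y|)|y|^2$. Using $|a(y)|=a_0(|y|)|y|\leq c_4(1+|y|^{p-1})$ from Lemma \ref{lem2}(b), multiplication by $|y|$ gives
$$G(y)\leq c_4\bigl(|y|+|y|^p\bigr).$$
Finally, using the elementary inequality $|y|\leq 1+|y|^p$ (which holds for $|y|\leq 1$ trivially, and for $|y|\geq 1$ since $p>1$), one arrives at $G(y)\leq 2c_4(1+|y|^p)$, so the choice $c_5=2c_4$ does the job.

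No real obstacle is expected here: both bounds are immediate consequences of previously established facts, and the only mild care needed is the routine estimate $|y|\leq 1+|y|^p$ used to collapse the linear term in the upper bound into the stated form $c_5(1+|y|^p)$. The result is essentially a bookkeeping corollary packaging what Lemma \ref{lem2} and (\ref{eq2}) already give.
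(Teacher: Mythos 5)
Your proof is correct and follows exactly the route the paper intends: the paper gives no written proof, simply asserting the corollary as a consequence of Lemma \ref{lem2} and relation (\ref{eq2}), and your argument (integrating the coercivity bound of Lemma \ref{lem2}(c) along the ray for the lower estimate, and combining (\ref{eq2}) with Lemma \ref{lem2}(b) plus the elementary inequality $|y|\leq 1+|y|^p$ for the upper one) is precisely that derivation made explicit. No gaps.
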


{\it Examples}. The following maps $a(\cdot)$ satisfy hypotheses $H(a)$ (see also Papageorgiou \& R\u{a}dulescu \cite{18, 19}):
\begin{itemize}
	\item [(a)] $a(y)=|y|^{p-2}y,\ 1<p<\infty$. \\
		This map corresponds to the $p$-Laplace differential operator defined by
		$$
		\Delta_p u={\rm div}\,\left[|Du|^{p-2}Du\right]\ \mbox{for all}\ u\in W^{1,p}(\Omega).
		$$
	\item [(b)] $a(y)=|y|^{p-2}y+|y|^{q-2}y,\ 1<q<p<\infty$.\\
		This map corresponds to the $(p,q)$-Laplace differential operator defined by
		$$
		\Delta_p u+\Delta_q u\ \mbox{for all}\ u\in W^{1,p}(\Omega).
		$$
		Such operators arise in problems of mathematical physics (see Cherfils \& Ilyasov \cite{4}). A survey of some recent results on such equations with several relevant references, can be found in Marano \& Mosconi \cite{14}.
	\item [(c)] $a(y)=[1+|y|^2]^{\frac{p-2}{2}}y,\ 1<p<\infty$.\\
		This map corresponds to the generalized $p$-mean curvature differential operator defined by
		$$
		{\rm div}\,[(1+|Du|^2)^{\frac{p-2}{2}}Du]\ \mbox{for all}\ u\in W^{1,p}(\Omega).
		$$
	\item [(d)] $a(y):|y|^{p-2}y\left(1+\frac{1}{1+|y|^p}\right),\ 1<p<\infty$
\end{itemize}

Let $A:W^{1,p}(\Omega)\rightarrow W^{1,p}(\Omega)^*$ be the nonlinear map defined by
$$
\langle A(u),h\rangle=\int_\Omega(a(Du),Dh)_{\RR^N}dz\ \mbox{for all}\ u,h\in W^{1,p}(\Omega).
$$

The next proposition establishes the properties of $A(\cdot)$ and is a special case of a more general result of Gasinski \& Papageorgiou \cite{9} (see also Gasinski \& Papageorgiou \cite[Problem 2.192]{10}).

\begin{prop}\label{prop4}
	If hypotheses $H(a)$ hold, then $A(\cdot)$ is bounded (that is, maps bounded sets to bounded sets), continuous, monotone (thus maximal monotone, too) and of type $(S)_+$, that is, if $u_n\xrightarrow{w}u$ in $W^{1,p}(\Omega)$ and $\limsup_{n\rightarrow\infty}\langle A(u_n),u_n-u\rangle\leq0$, then $u_n\rightarrow u$ in $W^{1,p}(\Omega)$.
\end{prop}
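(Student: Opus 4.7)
The plan is to verify the four asserted properties---boundedness, continuity, (maximal) monotonicity, and the $(S)_+$-condition---sequentially, drawing on Lemma~\ref{lem2}, Corollary~\ref{cor3}, and the convexity of the primitive $G$. Boundedness follows immediately from Lemma~\ref{lem2}(b) combined with H\"older's inequality: $|\langle A(u), h\rangle|\le c_4(|\Omega|^{1/p'}+\|Du\|_p^{p-1})\|Dh\|_p$, so $\|A(u)\|_\ast\le C(1+\|u\|^{p-1})$. For continuity, if $u_n\to u$ strongly in $W^{1,p}(\Omega)$, I would pass to a subsequence along which $Du_n\to Du$ a.e.\ with $|Du_n|\le g$ for some $g\in L^p(\Omega)$; continuity of $a$ (Lemma~\ref{lem2}(a)) and its growth bound then supply the hypotheses of dominated convergence in $L^{p'}(\Omega,\RR^N)$, giving $a(Du_n)\to a(Du)$ in $L^{p'}$ and thus $A(u_n)\to A(u)$ in $W^{1,p}(\Omega)^\ast$. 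Urysohn's subsequence criterion promotes this to full sequential convergence.

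Monotonicity is obtained by integrating the pointwise strict monotonicity of $a$ from Lemma~\ref{lem2}(a); since $A$ is continuous and defined everywhere on the reflexive space $W^{1,p}(\Omega)$, a standard theorem of Browder on monotone operators upgrades it to maximal monotonicity.

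The real content lies in the $(S)_+$ property. Suppose $u_n\rightharpoonup u$ in $W^{1,p}(\Omega)$ with $\limsup_n\langle A(u_n),u_n-u\rangle\le 0$. By Rellich--Kondrachov, $u_n\to u$ in $L^p(\Omega)$, so only the gradients require work. Monotonicity together with $\langle A(u),u_n-u\rangle\to 0$ forces
$$\int_\Omega (a(Du_n)-a(Du),Du_n-Du)_{\RR^N}\,dz\to 0,$$
and since the integrand is nonnegative, a subsequence converges to zero a.e. Using the strict monotonicity of $a$ together with the quantitative local lower bound supplied by $H(a)(iii)$, this forces $Du_n(z)\to Du(z)$ a.e. To upgrade a.e.\ convergence to $L^p$-convergence, I would exploit the convex subdifferential inequalities
$$G(Du_n)-G(Du)\ge (a(Du),Du_n-Du)_{\RR^N},\quad G(Du)-G(Du_n)\ge (a(Du_n),Du-Du_n)_{\RR^N};$$
integrated, these sandwich $\int_\Omega G(Du_n)\,dz$ between quantities tending to $\int_\Omega G(Du)\,dz$, so $\int_\Omega G(Du_n)\,dz\to \int_\Omega G(Du)\,dz$. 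Combined with weak $L^p$-convergence of $Du_n$, a.e.\ convergence, and the $p$-coercivity of $G$ from Corollary~\ref{cor3}, a Vitali/Brezis--Lieb-type argument yields $Du_n\to Du$ in $L^p(\Omega,\RR^N)$. Urysohn's criterion again promotes this to full convergence. The principal obstacle is precisely this last step: extracting strong $L^p$-convergence of the gradients from the vanishing of the integrated monotonicity functional, which is what requires the convexity-and-coercivity package associated with $H(a)(iv)$ and not merely the pointwise strict monotonicity of $a$.
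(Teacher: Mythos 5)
Your argument is correct and complete; note, however, that the paper does not prove Proposition~\ref{prop4} at all --- it simply quotes it as a special case of a result of Gasinski \& Papageorgiou \cite{9} (see also \cite[Problem 2.192]{10}) --- so your write-up supplies a proof where the paper offers only a citation. The route you take (H\"older plus Lemma~\ref{lem2}(b) for boundedness; a.e.\ convergence and dominated convergence in $L^{p'}$ for continuity; pointwise monotonicity of $a$ integrated over $\Omega$, upgraded to maximal monotonicity by Browder--Minty; and, for $(S)_+$, the vanishing of $\int_\Omega(a(Du_n)-a(Du),Du_n-Du)_{\RR^N}dz$, a.e.\ gradient convergence, the two convexity inequalities for $G$ to get $\int_\Omega G(Du_n)dz\to\int_\Omega G(Du)dz$, and then Vitali via the coercive lower bound of Corollary~\ref{cor3}) is exactly the standard proof given in the cited reference. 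Two small points. First, the step from $(a(Du_n)-a(Du),Du_n-Du)_{\RR^N}\to0$ a.e.\ to $Du_n\to Du$ a.e.\ is most cleanly justified not by $H(a)(iii)$ but by first observing that Lemma~\ref{lem2}(b),(c) force $\{Du_n(z)\}_{n\ge1}$ to be bounded at a.e.\ fixed $z$ (otherwise the pairing blows up), and then invoking continuity and strict monotonicity of $a$ on convergent sub-subsequences; you should spell this out. Second, your closing claim that the final step ``requires the convexity-and-coercivity package associated with $H(a)(iv)$'' is a mis-attribution: the convexity of $G$ comes from the monotonicity of $a=\nabla G$ (hypothesis $H(a)(i)$, as recorded in the Remark after $H(a)$), and the lower bound $\frac{c_1}{p(p-1)}|y|^p\le G(y)$ is Corollary~\ref{cor3}, which uses only $H(a)(i)$--$(iii)$; hypothesis $H(a)(iv)$ plays no role in Proposition~\ref{prop4}. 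Neither point affects the validity of the proof.
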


We will also need the following strong comparison principle due to Papageorgiou, R\u{a}dulescu \& Repov\v{s} \cite{21}.

\begin{prop}\label{prop5}
	If hypotheses $H(a)$ hold, $k\in L^\infty(\Omega)$ with $k(z)\geq0$ for almost all $z\in\Omega$, $h_1, h_2\in L^\infty(\Omega)$
	$$
	0<\tilde{\gamma}\leq h_2(z)-h_1(z)\ \mbox{for almost all}\ z\in\Omega
	$$
	and $u,v\in C^{1,\alpha}(\overline\Omega)$ with $\alpha\in(0,1], u\leq v$ and
	\begin{align*}
		-{\rm div}\,a(Du) + k(z)|u|^{p-2}u=h_1(z)\ \mbox{for almost all}\ z\in\Omega, \\
		-{\rm div}\,a(Dv) + k(z)|v|^{p-2}v=h_2(z)\ \mbox{for almost all}\ z\in\Omega,
	\end{align*}
	then $v-u\in {\rm int}\,C_+$.
\end{prop}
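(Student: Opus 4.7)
Set $w := v - u$, which by hypothesis satisfies $w \geq 0$ in $\overline{\Omega}$; the goal is to upgrade this to $w \in \mathrm{int}\,C_+$, i.e., $w(z) > 0$ for all $z \in \Omega$ together with $\partial w/\partial n(z) < 0$ at every boundary zero $z \in \partial\Omega \cap w^{-1}(0)$. Subtracting the two equations yields, almost everywhere in $\Omega$,
$$
-\mathrm{div}\bigl[a(Dv) - a(Du)\bigr] + k(z)\bigl[|v|^{p-2}v - |u|^{p-2}u\bigr] = h_2(z) - h_1(z) \geq \tilde{\gamma} > 0.
$$

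The main step is to linearise along the segment joining $Du(z)$ to $Dv(z)$ via the fundamental theorem of calculus:
$$
a(Dv) - a(Du) = M(z)\,Dw, \qquad M(z) := \int_0^1 \nabla a\bigl(Du(z) + t\,Dw(z)\bigr)\,dt,
$$
and analogously $|v|^{p-2}v - |u|^{p-2}u = C(z)\,w$ for some $C \in L^\infty(\Omega)$ with $C \geq 0$. Since $u, v \in C^{1,\alpha}(\overline{\Omega})$, hypothesis $H(a)(\mathrm{iii})$ together with the positivity of $\vartheta$ ensures that the symmetric part of $M(z)$ is positive definite wherever $Du + t\,Dw$ does not vanish identically in $t \in [0,1]$, with ellipticity constants controlled by the $C^1$-norms of $u$ and $v$. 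Hence $w \geq 0$ is a weak supersolution of the linear elliptic inequality
$$
-\mathrm{div}\bigl(M(z)\,Dw\bigr) + C(z)\,w \geq \tilde{\gamma} > 0 \quad \text{in } \Omega,
$$
with bounded measurable coefficients that are potentially degenerate at common critical points of $u$ and $v$.

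The conclusion now follows from the nonlinear strong maximum principle and Hopf boundary point lemma of Pucci--Serrin \cite{24}, whose framework is tailored precisely to operators of the form $-\mathrm{div}\,a(Du)$ under assumption $H(a)$. Arguing by contradiction, suppose $w(z_0) = 0$ at some $z_0 \in \overline{\Omega}$. If $z_0 \in \Omega$, then $z_0$ is an interior minimum of $w$, so $Dw(z_0) = 0$ and consequently $Du(z_0) = Dv(z_0)$; comparison with the classical radial Hopf barrier on a small ball $B \subset \Omega$ with $z_0 \in \partial B$, exploiting the strict positivity $\tilde{\gamma} > 0$ of the right-hand side, contradicts $w \geq 0$ in $B$. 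If $z_0 \in \partial\Omega$, an analogous interior-ball barrier tangent to $\partial\Omega$ at $z_0$ yields the strict inequality $\partial w/\partial n(z_0) < 0$, whence $v - u \in \mathrm{int}\,C_+$.

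The main obstacle is the potential degeneracy of $M(z)$ at points where the gradients of $u$ and $v$ are simultaneously small, which obstructs a direct application of the linear Hopf lemma. This is precisely the situation handled by the Pucci--Serrin framework: the one-sided bound $\lim_{t\to 0^+} a'_0(t)t/a_0(t) > -1$ in $H(a)(\mathrm{i})$ supplies the structural condition needed for their nonlinear maximum principle, while the strictly positive gap $h_2 - h_1 \geq \tilde{\gamma} > 0$ excludes the tangency $w \equiv 0$ in any neighbourhood of a putative zero, delivering both the interior positivity and the strict Hopf inequality.
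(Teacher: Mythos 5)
You should first note that the paper does not prove Proposition \ref{prop5} at all: it is quoted from Papageorgiou, R\u{a}dulescu \& Repov\v{s} \cite{21}, so your argument has to stand on its own. Its skeleton --- subtract the equations, linearise $a(Dv)-a(Du)=M(z)Dw$ with $M(z)=\int_0^1\nabla a(Du+tDw)\,dt$, and run a strong maximum principle plus a Hopf barrier for $w=v-u\ge 0$ --- is the standard and correct starting point, and you correctly identify the degeneracy of $M(z)$ on the set where $Du$ and $Dv$ vanish simultaneously as the main obstacle. The gap is that you then dismiss this obstacle by an appeal that does not apply. The Pucci--Serrin results made available by $H(a)$ are a strong maximum principle and a boundary point lemma for a \emph{single} quasilinear inequality of the form ${\rm div}\,a(Du)\le \beta(u)$; they say nothing about the \emph{difference} of two solutions, which is governed by the linear operator $-{\rm div}(M(z)D\,\cdot\,)$ whose ellipticity constant, by $H(a)(iii)$ and \eqref{eq1}, is only bounded below by $c_1\int_0^1|Du+tDw|^{p-2}dt$ and hence vanishes on $\{Du=Dv=0\}$ when $p>2$. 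This is not a removable technicality: strong comparison principles for the $p$-Laplacian are known to fail without extra hypotheses, so the entire content of the proposition is the exclusion of a zero of $w$ at such a degenerate point, and that exclusion must use the gap $\tilde\gamma>0$ quantitatively. You assert that the gap ``excludes the tangency'' but never perform this step; for the model case $a(y)=|y|^{p-2}y$ with $p>2$, testing the inequality $-{\rm div}\,(a(Dv)-a(Du))\ge\tilde\gamma/2$ on a small ball $B_r(z_0)$ around a degenerate zero $z_0$ against a standard bump function gives only $\tilde\gamma\, r\le C r^{\alpha(p-1)}$, which is a contradiction for small $r$ only if $\alpha(p-1)>1$. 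Something genuinely sharper is needed exactly where your proof stops.

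Two smaller defects. In the interior step the contradiction is misstated: the Hopf barrier on a ball $B\subseteq\{w>0\}$ touching $\{w=0\}$ at $z_0$ yields $\frac{\partial w}{\partial\nu}(z_0)<0$, which contradicts $Dw(z_0)=0$ at the interior minimum $z_0$, not ``$w\ge0$ in $B$''; and even this requires $M$ to be uniformly elliptic on $\overline{B}$, i.e.\ $|Du|+|Dv|$ bounded away from zero there, which is again precisely the unresolved case. Also, the factorisation $|v|^{p-2}v-|u|^{p-2}u=C(z)w$ with $C\in L^\infty(\Omega)$, $C\ge0$, holds only for $p\ge2$; for $1<p<2$ the quotient is unbounded where $u=v=0$. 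This last point is repairable (the term has a favourable sign and is small near any zero of $w$), but it should be addressed.
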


We introduce the following hypotheses on the potential function $\xi(\cdot)$ and the boundary coefficient $\beta(\cdot)$

\smallskip
${H(\xi)}:$ $\xi\in L^\infty(\Omega)$;

\smallskip
${H(\beta)}:$ $\beta\in C^{0,\alpha}(\partial\Omega)$ with $\alpha\in(0,1)$ and $\beta(z)\geq0$ for all $z\in\partial\Omega$.

\begin{remark}
	The case $\beta\equiv0$ corresponds to the Neumann problem.
\end{remark}

Let $\mu: W^{1,p}(\Omega)\rightarrow\RR$ be the $C^1$-functional defined by
$$
\mu(u)=\int_\Omega pG(Du)dz + \int_\Omega\xi(z)|u|^pdz + \int_{\partial\Omega}\beta(z)|u|^pd\sigma\ \mbox{for all}\ u\in W^{1,p}(\Omega).
$$

Consider a Carath\'eodory function $f_0:\Omega\times\RR\rightarrow\RR$ satisfying
$$
|f_0(z,x)|\leq a_0(z)(1+|x|^{r-1})\ \mbox{for almost all}\ z\in\Omega,\ \mbox{and for all}\ x\in\RR,
$$
with $a_0\in L^{\infty}(\Omega)$ and $1\leq r\leq p^*=\left\{\begin{array}{ll}\frac{Np}{N-p} & \mbox{if}\ p<N\\ +\infty & \mbox{if}\ N\leq p\end{array}\right.$ (the critical Sobolev exponent).

We set $F_0(z,x)=\int_0^xf_0(z,s)ds$ and consider the $C^1$-functional $\varphi_0:W^{1,p}(\Omega)\rightarrow\RR$ defined by
$$
\varphi_0(v)=\frac{1}{p}\mu(u) - \int_\Omega F_0(z,u)dz\ \mbox{for all}\ u\in W^{1,p}(\Omega).
$$

From Papageorgiou \& R\u{a}dulescu \cite{20} we have the following proposition. The result is essentially an outgrowth of the nonlinear regularity theory of Lieberman \cite{12}.

\begin{prop}\label{prop6}
	Assume that hypotheses $H(a)$ hold and $u_0\in W^{1,p}(\Omega)$ is a local $C^1(\overline\Omega)$-minimizer of $\varphi_0(\cdot)$, that is, there exists $\rho_1>0$ such that
	$$
	\varphi_0(u_0)\leq\varphi_0(u_0+h)\ \mbox{for all}\ h\in C^1(\overline\Omega)\ \mbox{with}\ ||h||_{C^1(\overline\Omega)}\leq\rho_1.
	$$
	Then $u_0\in C^{1,\alpha}(\overline\Omega)$ for some $\alpha\in(0,1)$ and $u_0$ is a local $W^{1,p}(\Omega)$-minimizer of $\varphi_0$, that is, there exists $\rho_2>0$ such that
	$$
	\varphi_0(u_0)\leq\varphi_0(u_0+h)\ \mbox{for all}\ h\in W^{1,p}(\Omega)\ \mbox{with}\ ||h||\leq\rho_2.
	$$
\end{prop}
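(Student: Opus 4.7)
The plan is to argue by contradiction. Suppose $u_0$ is a local $C^1(\overline\Omega)$-minimizer but not a local $W^{1,p}(\Omega)$-minimizer. Then there exist sequences $\epsilon_n \downarrow 0$ and $u_n \in W^{1,p}(\Omega)$ with $\|u_n - u_0\| \leq \epsilon_n$ yet $\varphi_0(u_n) < \varphi_0(u_0)$. For each $n$, I would solve the auxiliary constrained variational problem
$$
m_n = \inf\bigl\{\varphi_0(v) : v \in W^{1,p}(\Omega),\ \|v - u_0\| \leq \epsilon_n\bigr\}.
$$
Corollary~\ref{cor3} gives the lower bound $G(Du) \geq \frac{c_1}{p(p-1)}|Du|^p$, while the subcritical growth of $f_0$, the compact Sobolev embedding, and the compactness of the trace $\gamma_0$ render $\int_\Omega F_0(z,u)\,dz$ weakly continuous on the bounded convex ball $\overline B_{\epsilon_n}(u_0)$. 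Hence $\varphi_0$ is sequentially weakly lower semicontinuous there, and $m_n$ is attained at some $v_n$. Comparing with $u_n$ yields $\varphi_0(v_n) \leq \varphi_0(u_n) < \varphi_0(u_0)$, so $v_n \neq u_0$ and $v_n \to u_0$ in $W^{1,p}(\Omega)$.

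Next, I would extract an Euler-type equation for $v_n$. Either $v_n$ lies in the open ball, in which case $\varphi_0'(v_n)=0$, or it is on the boundary sphere and the Kuhn--Tucker conditions for the convex constraint produce a Lagrange multiplier $\lambda_n \geq 0$ with $\varphi_0'(v_n) = \lambda_n J'(v_n-u_0)$, where $J(w)=\frac{1}{p}\|w\|^p$. In strong form, $v_n$ thus satisfies a Robin problem
$$
-{\rm div}\,a(Dv_n) + \xi(z)|v_n|^{p-2}v_n = f_0(z,v_n) + R_n \ \text{in}\ \Omega, \qquad \frac{\partial v_n}{\partial n_a} + \beta(z)|v_n|^{p-2}v_n = 0 \ \text{on}\ \partial\Omega,
$$
where $R_n$ is built from $v_n - u_0$; testing against $v_n - u_0$ and using $\|v_n - u_0\| \leq \epsilon_n \to 0$ shows $\lambda_n$ remains bounded and $R_n \to 0$ in $W^{-1,p'}(\Omega)$.

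The finish is a regularity bootstrap. A Moser iteration built on the strong monotonicity and the bounds of Lemma~\ref{lem2} first yields a uniform estimate $\|v_n\|_\infty \leq M$. With this $L^\infty$ bound, the nonlinear boundary regularity of Lieberman~\cite{12} provides $\alpha \in (0,1)$ and $C>0$, independent of $n$, such that $\|v_n\|_{C^{1,\alpha}(\overline\Omega)} \leq C$. By the compact embedding $C^{1,\alpha}(\overline\Omega) \hookrightarrow C^1(\overline\Omega)$ a subsequence converges in $C^1(\overline\Omega)$, and it must converge to $u_0$ since $v_n \to u_0$ in $W^{1,p}(\Omega)$. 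A standard subsequence argument then gives $\|v_n - u_0\|_{C^1(\overline\Omega)} \to 0$, so for $n$ large $\|v_n - u_0\|_{C^1(\overline\Omega)} \leq \rho_1$, whence the $C^1$-minimizer hypothesis forces $\varphi_0(u_0) \leq \varphi_0(v_n)$, contradicting $\varphi_0(v_n) < \varphi_0(u_0)$.

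I expect the main obstacle to be the uniform $C^{1,\alpha}$-estimate for $\{v_n\}$: one must simultaneously keep the Lagrange-multiplier contribution $R_n$ under control on the right-hand side of the Robin problem and run the $L^\infty$-bootstrap and Lieberman estimate in a form that is genuinely uniform in $n$ and compatible with the nonhomogeneous principal part $a(\cdot)$ governed by hypotheses $H(a)(i)$, $(ii)$, $(iii)$. Once this uniform regularity is in hand, the passage from a $W^{1,p}$-neighborhood to a $C^1$-neighborhood is a soft compactness argument.
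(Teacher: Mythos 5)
A preliminary remark: the paper offers no proof of Proposition \ref{prop6} at all; it is quoted from Papageorgiou and R\u{a}dulescu \cite{20} and is the nonhomogeneous Robin version of the classical Brezis--Nirenberg / Garc\'{\i}a Azorero--Peral--Manfredi result. Your outline follows exactly that standard route (contradiction, constrained minimization over shrinking $W^{1,p}$-balls, an Euler--Lagrange equation with a multiplier, uniform $C^{1,\alpha}$ bounds, compact embedding of $C^{1,\alpha}(\overline\Omega)$ into $C^1(\overline\Omega)$), so the architecture is right; the problems are in two specific steps.

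First, you never prove the assertion $u_0\in C^{1,\alpha}(\overline\Omega)$. This is not cosmetic: local $C^1$-minimality gives $\varphi_0'(u_0)=0$ (by density of $C^1(\overline\Omega)$ in $W^{1,p}(\Omega)$), hence $u_0$ solves the Robin problem and Lieberman's theory applies to it; and the resulting regularity of $Du_0$ is precisely what is needed later to handle the multiplier term. Second, and more seriously, your control of the multiplier is unjustified. Testing $\varphi_0'(v_n)=-\lambda_n J'(v_n-u_0)$ with $v_n-u_0$ yields only $\lambda_n\|v_n-u_0\|^{p}=|\langle \varphi_0'(v_n),v_n-u_0\rangle|\le C\epsilon_n$, i.e. $\lambda_n\le C\epsilon_n^{1-p}$, which may blow up for every $p>1$; so neither the boundedness of $\lambda_n$ nor $R_n\to 0$ in $W^{-1,p'}(\Omega)$ follows. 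Even if $R_n\to 0$ in $W^{-1,p'}(\Omega)$ did hold, that topology is far too weak to run Moser iteration or Lieberman's gradient estimates, which need the right-hand side controlled in $L^\infty$ (or at least in $L^s$ for large $s$) uniformly in $n$. The correct device is not to treat $\lambda_n J'(v_n-u_0)$ as a forcing term but to absorb it into the principal part: regard $v_n$ as a solution of a Robin problem for the operator $y\mapsto a(y)+\lambda_n|y-Du_0(z)|^{p-2}(y-Du_0(z))$ together with the corresponding zero-order perturbation, normalize by $1+\lambda_n$, check that this family satisfies the structure conditions of \cite{12} uniformly in $n$ (here $u_0\in C^{1,\alpha}(\overline\Omega)$ is used), and treat the cases $\sup_n\lambda_n<\infty$ and $\lambda_n\to\infty$ separately. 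A minor further point: the weak continuity of $u\mapsto\int_\Omega F_0(z,u)\,dz$ that you invoke to obtain the constrained minimizers $v_n$ requires $r<p^*$, while the stated growth condition on $f_0$ allows the critical exponent. Once these repairs are made, the concluding compactness argument is exactly as you describe.
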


We will also use some facts about the spectrum of the following nonlinear eigenvalue problem:
\begin{equation}\label{eq3}
	\left\{
		\begin{array}{ll}
			-\Delta_ru(z) + \xi(z)|u(z)|^{r-2}u(z) = \hat\lambda|u(z)|^{r-2}u(z)\ \mbox{in}\ \Omega, \\
			\frac{\partial u}{\partial n_r} + \beta(z)|u|^{r-2} u = 0\ \mbox{on}\ \partial\Omega, 1<r<\infty.
		\end{array}
	\right\}
\end{equation}

In this case, the conormal derivative $\frac{\partial u}{\partial n_r}$ is defined by
$$
\frac{\partial u}{\partial n_r} = |Du|^{r-2}\frac{\partial u}{\partial n}\ \mbox{for all}\ u\in W^{1,r}(\Omega).
$$

As before, $n(\cdot)$ denotes the outward unit normal on $\partial\Omega$. We say that $\hat\lambda\in\RR$ is an ``eigenvalue", if problem (\ref{eq3}) admits a nontrivial solution $\hat{u}\in W^{1,r}(\Omega)$, known as an ``eigenfunction" corresponding to the eigenvalue $\hat\lambda$. The nonlinear regularity theory of Lieberman \cite{12} (see also Gasinski \& Papageorgiou \cite[pp. 737-738]{8}), implies that $\hat{u}\in C^1(\overline\Omega)$. From Fragnelli, Mugnai \& Papageorgiou \cite{7} (see also Mugnai \& Papageorgiou \cite{16} and Papageorgiou \& R\u{a}dulescu \cite{17}, where special cases of (\ref{eq3}) are discussed), we have the following property.

\begin{prop}\label{prop7}
	If hypotheses $H(\xi), H(\beta)$ hold, then problem (\ref{eq3}) admits a smallest eigenvalue $\hat{\lambda_1}=\hat{\lambda_1}(r,\xi,\beta)\in\RR$ such that
	\begin{itemize}
		\item [(a)] $\hat{\lambda_1}$ is isolated (that is, if $\hat{\sigma}(r)$ denotes the spectrum of (\ref{eq3}), then we can find $\epsilon>0$ such that $(\hat\lambda_1,\hat\lambda_1+\epsilon)\cap\hat{\sigma}(r)=\emptyset$);
		\item [(b)] $\hat\lambda_1$ is simple (that is, if $\hat{u},\hat{v}\in C^1(\overline\Omega)$ are eigenfunctions corresponding to $\hat\lambda_1$, then $\hat{u}=\hat{\xi}\hat{v}$ for some $\hat{\xi}\in\RR\backslash\{0\}$);
		\item [(c)] we have
		\begin{equation}\label{eq4}
			\hat\lambda_1=\inf\left\{\frac{\mu_r(u)}{||u||^r_r}:u\in W^{1,r}(\Omega), u\neq0\right\},
		\end{equation}
with
$$\mu_r(u)=\|Du\|^r_r+\int_\Omega \xi(z)|u|^rdz+\int_{\partial\Omega}\beta(z)|u|^rd\sigma.$$
	\end{itemize}
\end{prop}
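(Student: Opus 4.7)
The strategy is to prove (c) first by the direct method, then derive (b) via Picone's identity, and finally (a) by a compactness-contradiction argument.

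For (c), I would define $\hat\lambda_1$ as the right-hand side of \eqref{eq4} and show the infimum is attained on the $L^r$-sphere $\{u\in W^{1,r}(\Omega):\|u\|_r=1\}$. The functional $\mu_r$ is sequentially weakly lower semicontinuous on $W^{1,r}(\Omega)$: the gradient term by convexity of $y\mapsto|y|^r$, the $\xi$-term by weak continuity via the compact embedding $W^{1,r}(\Omega)\hookrightarrow\hookrightarrow L^r(\Omega)$, and the boundary term by compactness of the trace into $L^r(\partial\Omega)$. Since $\xi,\beta\in L^\infty$, a minimizing sequence is bounded in $W^{1,r}(\Omega)$, a weak limit $\hat u_1$ is a minimizer, and a Lagrange multiplier calculation shows it solves \eqref{eq3} with eigenvalue $\hat\lambda_1$. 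Positivity of $\hat u_1$ follows from the identity $\mu_r(|u|)=\mu_r(u)$ (since $|D|u||=|Du|$ a.e.\ and the other two terms depend only on $|u|^r$), Lieberman's nonlinear regularity (giving $\hat u_1\in C^{1,\alpha}(\overline\Omega)$), and the Pucci--Serrin strong maximum principle applied after shifting $\xi$ by a constant to make the zero-order coefficient non-negative.

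Simplicity (b), which I expect to be the main technical step, is handled by Picone's identity for the $r$-Laplacian. Given two positive $\hat\lambda_1$-eigenfunctions $\hat u,\hat v$, I would test the weak form of \eqref{eq3} for $\hat v$ with the function $\hat u^r/\hat v^{r-1}$ (admissibility uses $\hat v\in\mathrm{int}\,C_+$, so the quotient is a bounded $W^{1,r}$-function), and combine with the pointwise Picone inequality
\begin{equation*}
|D\hat u|^r \;\ge\; \Bigl(|D\hat v|^{r-2}D\hat v,\;D\bigl(\hat u^r/\hat v^{r-1}\bigr)\Bigr)_{\RR^N}\quad\text{a.e.\ on }\Omega,
\end{equation*}
which holds with equality iff $\hat u=c\hat v$. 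The boundary contribution from $\hat u^r/\hat v^{r-1}$ yields precisely $\int_{\partial\Omega}\beta\,\hat u^r\,d\sigma$, and the potential and eigenvalue terms cancel analogously, leaving equality in Picone and hence $\hat u=c\hat v$. The main obstacle is the careful bookkeeping of the Robin boundary and indefinite-potential contributions, and in justifying that $\hat u^r/\hat v^{r-1}\in W^{1,r}(\Omega)$ and can be used as a test function.

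For isolation (a), I argue by contradiction: assume $\hat\lambda_n\searrow\hat\lambda_1$ with $\hat\lambda_n\ne\hat\lambda_1$ and eigenfunctions $\hat u_n$ normalized by $\|\hat u_n\|_r=1$. Since $\mu_r(\hat u_n)=\hat\lambda_n$ is bounded, $\{\hat u_n\}$ is bounded in $W^{1,r}(\Omega)$, and Lieberman's regularity gives a uniform $C^{1,\alpha}(\overline\Omega)$ bound. Along a subsequence $\hat u_n\to\hat u$ in $C^1(\overline\Omega)$, where $\hat u$ is a $\hat\lambda_1$-eigenfunction with $\|\hat u\|_r=1$, hence of constant sign by (b). Then $\hat u_n$ has constant sign on $\overline\Omega$ for large $n$, but the Picone-type argument of step~(b) applied to the two positive eigenfunctions $\hat u_n$ (eigenvalue $\hat\lambda_n$) and $\hat u_1$ (eigenvalue $\hat\lambda_1$) forces $\hat\lambda_n=\hat\lambda_1$, contradicting $\hat\lambda_n\ne\hat\lambda_1$.
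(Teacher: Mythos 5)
The first thing to say is that the paper does not prove this proposition at all: it is stated as a known result, imported from Fragnelli, Mugnai \& Papageorgiou \cite{7} (with \cite{16} and \cite{17} cited for special cases), so there is no in-paper proof to compare your argument against. Your outline is, however, a faithful reconstruction of the standard proof used in that literature: direct method plus Lagrange multipliers for (c), positivity of the minimizer via $\mu_r(|u|)=\mu_r(u)$, nonlinear regularity and the strong maximum principle, Picone's identity (Allegretto--Huang \cite{2}) for simplicity, and a compactness/convergence argument for isolation. The bookkeeping you describe for the Robin and potential terms in the Picone step is exactly right: testing the equation for $\hat v$ with $\hat u^r/\hat v^{r-1}$ and the equation for $\hat u$ with $\hat u$ makes the $\xi$-, $\beta$- and $\hat\lambda_1$-terms cancel, forcing equality in Picone.

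Two points need tightening. First, admissibility of $\hat u^r/\hat v^{r-1}$ requires $\hat v\in D_+$ (strictly positive on all of $\overline\Omega$), not merely $\hat v\in{\rm int}\,C_+$, which would allow boundary zeros and destroy the boundedness of the quotient; for the Robin problem this upgrade is available because a boundary zero of $\hat v$ would contradict the Hopf lemma combined with the boundary condition $|D\hat v|^{r-2}\frac{\partial\hat v}{\partial n}=-\beta\hat v^{r-1}=0$ (this is exactly why the paper asserts $\hat u_1\in D_+$). Second, statement (b) concerns arbitrary $\hat\lambda_1$-eigenfunctions, so before invoking Picone you must show every such eigenfunction has constant sign; this follows from your own part (c), since any $\hat\lambda_1$-eigenfunction realizes the infimum in \eqref{eq4}, hence so does its absolute value, which is then a nonnegative eigenfunction lying in $D_+$, so the original eigenfunction never vanishes. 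With these additions (and the one-line remark that testing any eigenpair $(\hat\lambda,\hat u)$ with $h=\hat u$ gives $\hat\lambda\geq\hat\lambda_1$, so $\hat\lambda_1$ is indeed the smallest eigenvalue), your argument is complete and consistent with the proof in the cited sources; your isolation argument via uniform $C^{1,\alpha}$ bounds and $C^1$-convergence is a legitimate alternative to the more common Anane-type estimate on the measure of nodal domains of eigenfunctions associated with eigenvalues $\hat\lambda_n\downarrow\hat\lambda_1$.
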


In (\ref{eq4}), the infimum is realized on the corresponding one-dimensional eigenspace. The above properties imply that the elements of this eigenspace have fixed sign. We denote by $\hat{u}_1=\hat{u}_1(r,\xi,\beta)$ the positive, $L^r$-normalized (that is, $||\hat{u}_1||_r=1$) eigenfunction corresponding to $\hat\lambda_1=\hat\lambda(r,\xi,\beta)$. The nonlinear Hopf lemma (see Pucci \& Serrin \cite[pp. 111, 120]{24} and Gasinski \& Papageorgiou \cite[p. 738]{8}) implies that $\hat{u}_1\in D_+$. Moreover, if $\hat{u}$ is an eigenfunction corresponding to an eigenvalue $\hat{\lambda}\neq\hat{\lambda}_1$, then $\hat{u}$ is nodal (that is, sign-changing).

For every $x\in\RR$, let $x^\pm=\max\{\pm x,0\}$. Then given $u\in W^{1,p}(\Omega)$, we set $u^\pm(\cdot)=u(\cdot)^\pm$. We know that
$$
u^\pm\in W^{1,p}(\Omega),\ |u|=u^++u^-,\ u=u^+-u^-.
$$

Given a measurable function $k:\Omega\times\RR\rightarrow\RR$ (for example, a Carath\'eodory function), we denote by $N_k(\cdot)$  the Nemytskii map corresponding to $k(\cdot,\cdot)$, that is,
$$
N_k(u)(\cdot) = k(\cdot,u(\cdot))\ \mbox{for all}\ u\in W^{1,p}(\Omega).
$$

If $v,u\in W^{1,p}(\Omega)$ and $v\leq u$, then we set
$$
	\begin{array}{ll}
		& [v,u] = \{y\in W^{1,p}(\Omega):v(z)\leq y(z)\leq u(z)\ \mbox{for almost all}\ z\in\Omega\} \\
	 & [u)=\{y\in W^{1,p}(\Omega): u(z)\leq y(z)\ \mbox{for almost all}\ z\in\Omega\}.
	\end{array}
$$

\section{($p-1$)-sublinear perturbation}

In this section, we examine the case where the perturbation $f(z,x)$ in problem \eqref{eqp} is $(p-1)$-sublinear near $+\infty$. More precisely, the hypotheses on $f(z,x)$ are the following:

\smallskip
${H(f)_1}$: $f:\Omega\times\RR\rightarrow\RR$ is a Carath\'eodory function such that $f(z,0)=0$ for almost all $z\in\Omega$ and
\begin{itemize}
	\item [(i)] for every $\rho>0$, there exists $a_\rho\in L^\infty(\Omega)$ such that
		$$
		|f(z,x)|\leq a_\rho(z)\ \mbox{for almost all}\ z\in\Omega,\ \mbox{and for all}\ 0\leq x\leq\rho;
		$$
	\item [(ii)] $\lim_{x\rightarrow+\infty}\frac{f(z,x)}{x^{p-1}}=0$ uniformly for almost all $z\in\Omega$;
	\item [(iii)] with $q\in(1,p]$ as in hypothesis $H(a)(iv)$ we have
		$$
		\lim_{x\rightarrow0^+}\frac{f(z,x)}{x^{q-1}} = +\infty\ \mbox{uniformly for almost all}\ z\in\Omega;
		$$
	\item [(iv)] for every $\rho>0$, there exists $\hat{\xi}_\rho>0$ such that for almost all $z\in\Omega$, the
	 function
		$$
		x\mapsto f(z,x) + \hat{\xi}_\rho x^{p-1}
		$$
		is nondecreasing on $[0,\rho]$.
\end{itemize}

\begin{remark}
	Since we are looking for positive solutions and all the above hypotheses concern the positive semi-axis $\RR_+=[0,+\infty)$, we may assume without any loss of generality that $f(z,x)=0$ for almost all $z\in\Omega$, and for all $x\leq0$. Hypothesis $H(f)_1(ii)$ implies that $f(z,\cdot)$ is $(p-1)$-superlinear near $+\infty$. Hypothesis $H(f)_1(iii)$ implies that $f(z,\cdot)$ is $(q-1)$-superlinear near $0^+$ (that is, $f(z,\cdot)$ exhibits a $q$-concave term near $0^+$). Hypothesis $H(f)_1(iv)$ is satisfied if for example $f(z,\cdot)$ is differentiable and for every $\rho>0$, there exists $\eta_\rho>0$ such that $f_x'(z,x)x\geq-\eta_\rho x^{p-1}$ for almost all $z\in\Omega$, and for all $0\leq x\leq\rho$. We stress that no global sign condition is imposed on $f(z,\cdot)$.
\end{remark}

{\it Example}. The following function satisfies hypotheses $H(f)_1$. For the sake of simplicity we drop the $z$-dependence:
$$
f(z)=\left\{
\begin{array}{ll}
	0 & \mbox{if}\ x<0 \\
	x^{\tau-1}-2x^{q-1} & \mbox{if}\ 0\leq x\leq1 \\
	x^{r-1}-2x^{s-1} & \mbox{if}\ 1<x
\end{array}
\right.
$$
with $\tau<q\leq p$ and $1<s<r<p$. Note that $f(\cdot)$ changes sign.

Let $$\mathcal{L}=\{\lambda\in\RR:\ \mbox{problem \eqref{eqp} has a positive solution}\},$$
$$
S_\lambda = \mbox{the set of all positive solutions of problem}\ \eqref{eqp}.
$$

\begin{prop}\label{prop8}
	If hypotheses $H(a), H(\xi), H(\beta), H(f)_1$ hold, then $\mathcal{L}\neq\emptyset$ and $S_\lambda\subseteq D_+$.
\end{prop}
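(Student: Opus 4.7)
The plan is to establish $\mathcal L\neq\emptyset$ by producing a positive solution for every sufficiently negative $\lambda$, and to deduce $S_\lambda\subseteq D_+$ from the nonlinear regularity theory combined with the Pucci--Serrin nonlinear maximum principle.

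For the existence part I would work with the truncated energy
$$\varphi_\lambda^+(u)=\frac{1}{p}\mu(u)-\frac{\lambda}{p}\|u^+\|_p^p-\int_\Omega F(z,u^+)\,dz,\qquad u\in W^{1,p}(\Omega),$$
whose non-negative critical points are exactly the positive weak solutions of \eqref{eqp}. The sublinear bound $F(z,x)\le\varepsilon x^p+c_\varepsilon$ extracted from $H(f)_1(ii)$, together with Corollary~\ref{cor3} and the boundedness of $\xi$, produces a lower estimate
$$\varphi_\lambda^+(u)\ge C_1\|Du\|_p^p+(-\lambda-\|\xi\|_\infty-p\varepsilon)\|u^+\|_p^p-\|\xi\|_\infty\|u^-\|_p^p-C_2$$
with $C_1>0$, so for $\varepsilon$ small and $\lambda$ sufficiently negative the functional becomes coercive on $W^{1,p}(\Omega)$ (after absorbing the $\|u^-\|_p^p$ contribution via the gradient term or via an auxiliary penalty that leaves the non-negative critical points unchanged). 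Sequential weak lower semicontinuity being standard, the direct method yields a global minimizer $\widetilde u\in W^{1,p}(\Omega)$.

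To exclude $\widetilde u\equiv 0$ I would evaluate $\varphi_\lambda^+$ at $t\hat u_1$, where $\hat u_1\in D_+$ is the principal eigenfunction provided by Proposition~\ref{prop7}. Hypothesis $H(f)_1(iii)$ gives $F(z,x)\ge Mx^q/q$ on some interval $[0,\delta]$ for arbitrary $M>0$, while the contributions from $\mu(\hat u_1)$ and $\|\hat u_1\|_p^p$ are of order $t^p$; since $q\le p$ by $H(a)(iv)$, the concave term wins as $t\to 0^+$, producing $\varphi_\lambda^+(t\hat u_1)<0=\varphi_\lambda^+(0)$ and hence $\widetilde u\ne 0$. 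Testing $(\varphi_\lambda^+)'(\widetilde u)=0$ against $-\widetilde u^{\,-}$, using Lemma~\ref{lem2}(c) and the sign of $\xi-\lambda$, forces $\widetilde u^{\,-}\equiv 0$, so $\widetilde u\ge 0$ is a positive weak solution of \eqref{eqp} and $\lambda\in\mathcal L$.

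For the inclusion $S_\lambda\subseteq D_+$, take any $u\in S_\lambda$. A Moser-type iteration based on the growth in $H(f)_1(i)$ first gives $u\in L^\infty(\Omega)$, and then the nonlinear regularity theory of Lieberman~\cite{12} delivers $u\in C^{1,\alpha}(\overline\Omega)$ for some $\alpha\in(0,1)$. Applying $H(f)_1(iv)$ with $\rho=\|u\|_\infty$, the equation of \eqref{eqp} rewrites as
$$-{\rm div}\,a(Du)+(\xi(z)-\lambda+\hat\xi_\rho)u^{p-1}=f(z,u)+\hat\xi_\rho u^{p-1}\ge 0\quad\text{in }\Omega,$$
so the nonlinear strong maximum principle together with the Hopf boundary lemma of Pucci--Serrin~\cite{24} yield $u\in D_+$. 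The main obstacle is the coercivity step: since $\xi$ is sign-indefinite and no Poincar\'e inequality holds on $W^{1,p}(\Omega)$, controlling the negative part $u^-$ uniformly requires exploiting the full gradient estimate of Corollary~\ref{cor3}, and it is this interplay that dictates the threshold on $\lambda$ below which $\mathcal L$ must contain a ray.
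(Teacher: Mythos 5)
Your overall strategy coincides with the paper's: truncate the reaction, minimize a coercive functional for $\lambda$ sufficiently negative, rule out the trivial minimizer by testing with $t\hat u_1$, and upgrade positivity to $D_+$ via Lieberman regularity and the Pucci--Serrin maximum principle. However, the non-triviality step contains a genuine gap as written. You assert that the contribution of $\mu(t\hat u_1)$ is of order $t^p$, so that the concave term $-Mt^q/q$ wins because $q\le p$. This is false for the gradient part of $\mu$: hypothesis $H(a)(iv)$ gives $G_0(s)\sim\frac{\tilde c}{q}s^q$ as $s\to0^+$, so $\int_\Omega pG(D(t\hat u_1))\,dz$ behaves like $\frac{p\tilde c}{q}t^q\|D\hat u_1\|_q^q$, which is of the \emph{same} order $t^q$ as the concave term (and if $q=p$ every term is of order $t^p$, so an order comparison proves nothing). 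The argument can only be closed by comparing the \emph{coefficients} of $t^q$ and invoking the fact that $H(f)_1(iii)$ lets the constant $M$ be arbitrarily large; this is precisely why the paper tests with the principal eigenfunction $\hat u_1(q,\xi/\tilde c_0,\beta/\tilde c_0)$ of the $q$-Laplacian eigenvalue problem and then chooses the concavity constant larger than $\tilde c_0\hat\lambda_1(q,\xi/\tilde c_0,\beta/\tilde c_0)$. Without this coefficient comparison your inequality chain does not yield $\varphi_\lambda^+(t\hat u_1)<0$.

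The coercivity difficulty you flag at the end is also real and cannot be resolved ``via the gradient term'': on $W^{1,p}(\Omega)$ constants are admissible, so $\|Du\|_p^p$ gives no control of $\|u^-\|_p$, and the same defect reappears when you test with $-\widetilde u^{\,-}$, where the resulting inequality $\frac{c_1}{p-1}\|D\widetilde u^{\,-}\|_p^p+\int_\Omega\xi(z)(\widetilde u^{\,-})^p\,dz+\int_{\partial\Omega}\beta(z)(\widetilde u^{\,-})^p\,d\sigma\le0$ does not force $\widetilde u^{\,-}=0$ for sign-changing $\xi$. The ``auxiliary penalty'' you mention in passing is the correct and necessary fix, and is exactly what the paper implements: add $\frac{\eta}{p}\|u\|_p^p$ with $\eta>\|\xi\|_\infty$ to the functional and simultaneously add $\eta x^{p-1}$ to the truncated reaction for $x>0$, so that the penalty cancels on nonnegative critical points but makes both the coercivity estimate and the $-\widetilde u^{\,-}$ test conclusive. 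With these two repairs made explicit, your proof becomes essentially the paper's.
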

\begin{proof}
	Let $\eta>||\xi||_\infty$ and consider the following Carath\'eodory function
	\begin{equation}\label{eq5}
		e_\lambda(z,x) = \left\{
		\begin{array}{ll}
			0 & \mbox{if}\ x\leq0 \\
			\left(\lambda+\eta\right)x^{p-1} + f(z,x) & \mbox{if}\ 0<x,
		\end{array}
		\right.
	\end{equation}
for all $\lambda\in\RR$.

	We set $E_\lambda(z,x)=\int^x_0e_\lambda(z,s)ds$ and consider the $C^1$-functional $\varphi_\lambda:W^{1,p}(\Omega)\rightarrow\RR$ defined by
$$
\varphi_\lambda(u)=\frac{1}{p}\mu(u) + \frac{\eta}{p}||u||^p_p - \int_\Omega E_\lambda(z,u)dz\ \mbox{for all}\ u\in W^{1,p}(\Omega).
$$

Let $F(z,x)=\int^x_0f(z,s)ds$. Hypotheses $H(f)_1(i),(ii)$ imply that given $\epsilon>0$, we can find $c_6=c_6(\epsilon)>0$ such that
\begin{equation}\label{eq6}
	F(z,x)\leq\frac{\epsilon}{p}x^p+c_6\ \mbox{for almost all}\ z\in\Omega,\ \mbox{and for all}\ x\geq0.
\end{equation}

Using (\ref{eq5}), (\ref{eq6}), Corollary \ref{cor3} and hypothesis $H(\beta)$, we have
$$
\varphi_\lambda(u)\geq\frac{c_1}{p(p-1)}||Du||^p_p + \frac{1}{p}\int_\Omega\left[\xi(z)+\eta-(\lambda+\xi)\right]|u|^pdz - c_7\ \mbox{for some}\ c_7>0.
$$

Choosing $\lambda\in\RR$ such that $\lambda+\epsilon<\eta-||\xi||_\infty$, we can write
$$
\begin{array}{ll}
	& \varphi_\lambda(u) \geq\frac{c_1}{p(p-1)}||Du||^p_p + c_8||u||^p_p - c_7\ \mbox{for some}\ c_8>0, \\
	\Rightarrow & \varphi_\lambda(\cdot)\ \mbox{is coercive}.
\end{array}
$$

By the Sobolev embedding theorem and the compactness of the trace map we deduce that $\varphi(\cdot)$ is sequentially weak lower semicontinuous. So, by the Weierstrass-Tonelli theorem, we can find $u_\lambda\in W^{1,p}(\Omega)$ such that
\begin{equation}\label{eq7}
	\varphi_\lambda(u_\lambda) = \inf\left\{\varphi_\lambda(u):u\in W^{1,p}(\Omega)\right\}.
\end{equation}

Hypothesis $H(a)(iv)$ implies that given $\tilde{c}_0>\tilde{c}$, we can find $\delta\in(0,1)$ such that
\begin{equation}\label{eq8}
	G(y)\leq\frac{\tilde{c_0}}{q}|y|^q\ \mbox{for all}\ |y|\leq\delta.
\end{equation}

Hypothesis $H(f)(iii)$ implies that given any $\vartheta>0$, by choosing $\delta>0$ even smaller if necessary, we can also have
\begin{equation}\label{eq9}
	F(z,x)\geq\frac{\vartheta}{q}x^q\ \mbox{for almost all}\ z\in\Omega,\ \mbox{and for all}\ 0\leq x\leq\delta.
\end{equation}

Let $\hat\lambda_1=\hat\lambda_1(q,\xi_0,\beta_0)$ and $\hat{u}_1=\hat{u}_1(q,\xi_0,\beta_0)\in D_+$ with $\xi_0=\frac{1}{\tilde{c}_0}\xi$, $\beta_0=\frac{1}{\tilde{c}_0}\beta$. We choose small $t\in(0,1)$ such that
\begin{equation}\label{eq10}
	0\leq t\hat{u}_1(z)\leq\delta\ \mbox{and}\ |D(t\hat{u}_1)(z)|\leq\delta\ \mbox{for all}\ z\in\overline\Omega.
\end{equation}

Using (\ref{eq5}), (\ref{eq8}), (\ref{eq9}), (\ref{eq10}), we have
$$\begin{array}{ll}
\varphi_\lambda(t\hat{u}_1) &\di \leq\frac{\tilde{c}_0t^q}{q}||D\hat{u}_1||^q_q + \frac{1}{p}\int_\Omega \xi(z)|t\hat{u}_1|^pdz + \frac{1}{p}\int_{\partial\Omega}\beta(z)|t\hat{u}_1|^pd\sigma \\
&\di -\frac{\lambda t^p}{p}||\hat{u}_1||^p_p - \frac{\vartheta t^q}{q}\ \mbox{(recall that $||\hat{u}_1||_q=1$)} \\
&\di \leq\frac{\tilde{c}_0 t^q}{q}\left(||D\hat{u}_1||^q_q + \int_\Omega\xi_0|\hat{u}_1|^qdz + \int_{\partial\Omega}\beta_0(z)|\hat{u}_1|^qd\sigma\right) - \frac{\vartheta}{q}t^q \\
&\di \mbox{(since $0<\delta<1$ and $q\leq p$)} \\
&\di \leq\frac{t^q}{q}\left(\tilde{c}_0\hat\lambda_1-\vartheta\right).
\end{array}$$

But $\vartheta>0$ is arbitrary. So, choosing $\vartheta>\tilde{c}_0\hat\lambda_1$, we see that
$$
\begin{array}{ll}
	& \varphi_\lambda(t\hat{u}_1)<0, \\
	\Rightarrow & \varphi_\lambda(u_\lambda)<0 = \varphi_\lambda(0)\ \mbox{(see (\ref{eq7}))},\\
	\Rightarrow & u_\lambda\neq0.
\end{array}
$$

From (\ref{eq7}) we have for all $h\in W^{1,p}(\Omega)$
\begin{eqnarray}
	&& \varphi'_\lambda(u_\lambda)=0 \nonumber \\
	& \Rightarrow &\langle A(u_\lambda),h\rangle + \int_\Omega(\xi + \eta)|u_\lambda|^{p-2}u_\lambda hdz + \int_{\partial\Omega}\beta_\lambda|u_\lambda|^{p-2}u_\lambda hd\sigma = \int_\Omega e_\lambda(z,u_\lambda)hdz. \label{eq11}
\end{eqnarray}

In (\ref{eq11}) we choose $h=-u^-_{\lambda}\in W^{1,p}(\Omega)$. Then
$$
\begin{array}{ll}
	& \frac{c_1}{p-1}||Du^-_\lambda||^p_p + \frac{1}{p}\int_\Omega[\xi(z)+\eta](u^-_\lambda)^pdz\leq0 \\
	& \mbox{(see Lemma \ref{lem2}, hypothesis $H(\beta)$, and \eqref{eq5})} \\
	\Rightarrow & u_\lambda\geq0,\ u_\lambda\neq0\ \mbox{(recall that $\eta>||\xi||_\infty$)}.
\end{array}
$$

It follows  from (\ref{eq5}) and (\ref{eq11}) that
\begin{equation}\label{eq12}\begin{array}{ll}
&\displaystyle	-{\rm div}\,a(Du_\lambda(z)) + \xi(z)u_\lambda(z)^{p-1} = \lambda u_\lambda(z)^{p-1} + f(z,u_\lambda(z))\ \mbox{for almost all}\ z\in\Omega  \\
&\displaystyle	\frac{\partial u}{\partial n_a} + \beta(z)u^{p-1}_\lambda = 0\ \mbox{on}\ \partial\Omega\ \mbox{(see Papageorgiou \& R\u{a}dulescu \cite{17})}.
\end{array}
\end{equation}

From (\ref{eq12}) and Papageorgiou \& R\u{a}dulescu \cite{20}, we have $u_\lambda\in L^\infty(\Omega)$. Then the nonlinear regularity theory of Lieberman \cite{12} implies that $u_\lambda\in C_+\backslash\{0\}$.

Let $\rho=||u_\lambda||_\infty$ and let $\hat{\xi_\rho}>0$ be as postulated by hypothesis $H(f)(iv)$. Then from (\ref{eq35}) we have
$$
\begin{array}{ll}
	& {\rm div}\,a(Du_\lambda(z))\leq \left(||\xi||_\infty + \hat\xi_\rho\right)u_\lambda(z)^{p-1}\ \mbox{for almost all}\ z\in\Omega, \\
	\Rightarrow & u_\lambda\in D_+\ \mbox{(see Pucci \& Serrin \cite[pp. 111, 120]{24})}.
\end{array}
$$

Therefore we conclude that $\lambda\in\mathcal{L}$ and so $\mathcal{L}\neq\emptyset$ and also $S_\lambda\subseteq D_+$.
\end{proof}

Next, we show that $\mathcal{L}$ is a half-line.

\begin{prop}\label{prop9}
	If hypotheses $H(a),H(\xi),H(\beta),H(f)_1$ hold, $\lambda\in\mathcal{L}$, and $\vartheta<\lambda$, then $\vartheta\in\mathcal{L}$.
\end{prop}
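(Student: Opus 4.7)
The plan is to exhibit any $u_\lambda \in S_\lambda \subseteq D_+$ as an upper solution for $(P_\vartheta)$ and to extract a positive solution below it by direct minimization of a truncated functional. Fix $u_\lambda \in S_\lambda$ and $\eta>||\xi||_\infty$, and introduce the Carath\'eodory truncation
$$
\hat{e}_\vartheta(z,x) = \left\{\begin{array}{ll} 0, & x \le 0,\\ (\vartheta+\eta)\, x^{p-1} + f(z,x), & 0 < x \le u_\lambda(z),\\ (\vartheta+\eta)\, u_\lambda(z)^{p-1} + f(z,u_\lambda(z)), & x > u_\lambda(z), \end{array}\right.
$$
with primitive $\hat{E}_\vartheta$, and the $C^1$-functional
$$
\hat{\varphi}_\vartheta(u) = \frac{1}{p}\mu(u) + \frac{\eta}{p}||u||_p^p - \int_\Omega \hat{E}_\vartheta(z,u)\,dz.
$$
Since $\hat{e}_\vartheta(z,\cdot)$ is bounded by an $L^\infty(\Omega)$-function and $\eta>||\xi||_\infty$, Corollary \ref{cor3} and $H(\beta)$ give coercivity of $\hat{\varphi}_\vartheta$; sequential weak lower semicontinuity follows from the Sobolev embedding and the compactness of the trace map. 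The Weierstrass--Tonelli theorem produces a global minimizer $u_\vartheta\in W^{1,p}(\Omega)$.

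To show $u_\vartheta \neq 0$, I would reuse the argument from Proposition \ref{prop8} almost verbatim: on the interval $[0, u_\lambda]$ the truncation is inactive, so $\hat{\varphi}_\vartheta$ coincides with the functional of Proposition \ref{prop8} corresponding to the parameter $\vartheta$. Because $u_\lambda, \hat{u}_1(q,\xi_0,\beta_0) \in D_+$, for all sufficiently small $t>0$ we have $t\hat{u}_1 \leq u_\lambda$ on $\overline{\Omega}$ together with $|D(t\hat{u}_1)|\le\delta$, so $\hat{\varphi}_\vartheta(t\hat{u}_1)=\varphi_\vartheta(t\hat{u}_1)$ and the estimate of Proposition \ref{prop8} (driven by $H(a)(iv)$ and $H(f)_1(iii)$) gives $\hat{\varphi}_\vartheta(t\hat{u}_1)<0$. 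Hence $\hat{\varphi}_\vartheta(u_\vartheta) < 0 = \hat{\varphi}_\vartheta(0)$ and $u_\vartheta \neq 0$.

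The heart of the argument is the two-sided bound $0 \le u_\vartheta \le u_\lambda$. Testing $\hat{\varphi}_\vartheta'(u_\vartheta)=0$ with $h=-u_\vartheta^-$ and using Lemma \ref{lem2}(c) together with $\eta>||\xi||_\infty$ and $\beta\geq0$ gives $u_\vartheta \geq 0$. Testing with $h=(u_\vartheta-u_\lambda)^+$ and subtracting the weak form of $(P_\lambda)$ evaluated on the same test function (after adding $\eta u_\lambda^{p-1}(u_\vartheta-u_\lambda)^+$ on both sides so the zeroth-order operator matches) yields
$$
\langle A(u_\vartheta)-A(u_\lambda),(u_\vartheta-u_\lambda)^+\rangle + \int_\Omega(\xi+\eta)(u_\vartheta^{p-1}-u_\lambda^{p-1})(u_\vartheta-u_\lambda)^+ dz + \int_{\partial\Omega}\beta(u_\vartheta^{p-1}-u_\lambda^{p-1})(u_\vartheta-u_\lambda)^+ d\sigma = (\vartheta-\lambda)\int_\Omega u_\lambda^{p-1}(u_\vartheta-u_\lambda)^+ dz.
$$
The right-hand side is $\leq 0$ since $\vartheta<\lambda$ and $u_\lambda>0$; by monotonicity of $A$ (Proposition \ref{prop4}), of $t\mapsto t^{p-1}$ on $\RR_+$, and because $\xi+\eta\geq0$ and $\beta\geq0$, the left-hand side is $\geq 0$. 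It follows that $(u_\vartheta-u_\lambda)^+=0$, i.e.\ $u_\vartheta\leq u_\lambda$.

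With $0\le u_\vartheta\le u_\lambda$ the truncation is inactive on $u_\vartheta$, so the Euler identity reduces to the weak form of $(P_\vartheta)$. The nonlinear regularity of Lieberman \cite{12} (via Papageorgiou--R\u{a}dulescu \cite{20}) and the nonlinear Hopf lemma, applied exactly as at the end of the proof of Proposition \ref{prop8} with $\rho=||u_\vartheta||_\infty$ and $\hat{\xi}_\rho$ from $H(f)_1(iv)$, place $u_\vartheta \in D_+$. Therefore $\vartheta\in\mathcal{L}$. The main technical obstacle is the upper comparison $u_\vartheta\leq u_\lambda$: this is precisely the step where the assumption $\vartheta<\lambda$ is consumed, and it relies in an essential way on the monotonicity properties of $A$ supplied by Proposition \ref{prop4}.
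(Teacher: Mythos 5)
Your proposal is correct and follows essentially the same route as the paper: the same truncation at the known solution $u_\lambda\in S_\lambda$, direct minimization of the truncated functional, the Proposition \ref{prop8} argument to rule out the zero minimizer, and the test functions $-u_\vartheta^-$ and $(u_\vartheta-u_\lambda)^+$ to locate $u_\vartheta\in[0,u_\lambda]\setminus\{0\}$. Your explicit subtraction step (using the monotonicity of $A$ and the strict positivity of $\xi+\eta$ to force $(u_\vartheta-u_\lambda)^+=0$) just spells out what the paper leaves implicit.
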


\begin{proof}
	By hypothesis, $\lambda\in\mathcal{L}$. So, we can find $u_\lambda\in S_\lambda\subseteq D_+$. With $\eta>||\xi||_\infty$ as before, we introduce the following truncation-perturbation of the reaction in problem \eqref{eqp}:
	\begin{equation}\label{eq13}
		k_\vartheta(z,x)=\left\{
			\begin{array}{ll}
				0 & \mbox{if}\ x<0 \\
				\left(\vartheta+\eta\right)x^{p-1} + f(z,x) & \mbox{if}\ 0\leq x\leq u_\lambda(z) \\
				\left(\vartheta+\eta\right)u_\lambda(z)^{p-1} + f(z,u_\lambda(z)) & \mbox{if}\ u_\lambda(z)<x.
			\end{array}
		\right.
	\end{equation}
	
	This is a Carath\'eodory function. We set $K_\vartheta(z,x) = \int_0^x k_\vartheta(z,s)ds$ and consider the $C^1$-functional $\hat\varphi_\vartheta:W^{1,p}(\Omega)\rightarrow\RR$ defined by
	$$
	\hat\varphi_\vartheta(u)=\frac{1}{p}\mu(u) + \frac{\eta}{p}||u||^p_p - \int_\Omega K_\vartheta(z,u)dz\ \mbox{for all}\ u\in W^{1,p}(\Omega).
	$$
	
	Clearly, $\hat\varphi_\vartheta(\cdot)$ is coercive (see (\ref{eq13})) and sequentially weakly lower semicontinuous. So, we can find $u_\vartheta\in W^{1,p}(\Omega)$ such that
	\begin{equation}\label{eq14}
		\hat\varphi_\vartheta(u_\vartheta) = \inf\left\{\hat{\varphi}_\vartheta (u):u\in W^{1,p}(\Omega)\right\}.
	\end{equation}
	
	As in the proof of Proposition \ref{prop8}, using hypotheses $H(a)(iv)$ and $H(f)(iii)$, we show that $\hat\varphi_\vartheta(u_\vartheta)<0 = \hat\varphi_\vartheta(0)$, hence $u_\vartheta\neq0$. From (\ref{eq14}) we have
	\begin{eqnarray}
		& \hat\varphi'_\vartheta(u_\varphi) = 0, \nonumber \\
		\Rightarrow & \langle A(u_\vartheta),h\rangle + \int_\Omega\left(\xi(z)+\eta\right)|u_\vartheta|^{p-2}u_\vartheta hdz + \int_{\partial\Omega}\beta(z)|u_\vartheta|^{p-2}u_\vartheta hd\sigma \label{eq15} \\
		& = \int_\Omega k_\vartheta(z,u_\vartheta)hdz\ \mbox{for all}\ h\in W^{1,p}(\Omega) \nonumber.
	\end{eqnarray}
	
	In (\ref{eq15})  we first choose $h=-u^-_\vartheta\in W^{1,p}(\Omega)$. Then using Lemma 2 and (\ref{eq13}) we obtain
	$$
	\begin{array}{ll}
		& \frac{c_1}{p-1}||Du^-_\vartheta||^p_p + \int_\Omega\left[\xi(z)+\eta\right](u^-_\vartheta)^pdz \leq0\ \mbox{(see hypothesis $H(\beta)$)}, \\
		\Rightarrow & u_\vartheta\geq0,\ u_\vartheta\neq0.
	\end{array}
	$$
	
	Next, in (\ref{eq15}) we choose $h=(u_\vartheta-u_\lambda)^+\in W^{1,p}(\Omega)$. Then
	$$
	\begin{array}{ll}
		& \langle A(u_\vartheta),(u_\vartheta-u_\lambda)^+\rangle + \int_\Omega\left(\xi(z)+\eta\right)u^{p-1}_\vartheta(u_\vartheta-u_\lambda)^+dz + \int_{\partial\Omega}\beta(z)u^{p-1}_\vartheta(u_\vartheta-u_\lambda)^+d\sigma \\
		= & \int_\Omega(\left(\vartheta+\eta\right)u^{p-1}_\lambda + f(z,u_\lambda))(u_\vartheta-u_\lambda)^+dz\ \mbox{(see (\ref{eq13}))} \\
		\leq & \int_\Omega(\left(\lambda+\eta\right)u^{p-1}_\lambda + f(z,u_\lambda)(u_\vartheta-u_\lambda)^+dz\ \mbox{(recall that $\vartheta<\lambda$)} \\
		= & \langle A(u_\lambda), (u_\vartheta-u_\lambda)^+)\rangle + \int_\Omega\left(\xi(z)+\eta\right)u^{p-1}_\lambda(u_\vartheta-u_\lambda)^+dz + \int_{\partial\Omega}\beta(z)u^{p-1}_\lambda(u_\vartheta-u_\lambda)^+d\sigma\ \\
		& \mbox{(recall that $u_\lambda\in S_\lambda$)} \\
		\Rightarrow & u_\vartheta\leq u_\lambda.
	\end{array}
	$$
	
	We have proved that
	\begin{equation}\label{eq16}
		u_\vartheta\in[0,u_\lambda],\ u_\vartheta\neq0.
	\end{equation}
	
It follows	from (\ref{eq13}), (\ref{eq15}), (\ref{eq16}) that $\vartheta\in\mathcal{L}$ and $u_\vartheta\in S_\vartheta\subseteq D_+$.
\end{proof}

Let $\lambda^{*}=\sup\mathcal{L}$.

\begin{prop}\label{prop10}
	If hypotheses $h(a),H(\xi), H(\beta), H(f)_1$ hold, then $\lambda^{*}<+\infty$.
\end{prop}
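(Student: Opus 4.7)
The claim asserts that the set $\mathcal{L}$ of admissible parameters is bounded above. Since Proposition \ref{prop9} already shows $\mathcal{L}$ is a downward half-line, it remains to exhibit some $\bar\lambda$ beyond which (\ref{eqp}) has no positive solution. My plan is to test the weak formulation of (\ref{eqp}) against the positive, $L^p$-normalized first eigenfunction $\hat u_1 = \hat u_1(p,\xi,\beta) \in D_+$ of problem (\ref{eq3}) with $r=p$, and use the $(p-1)$-sublinear growth of $f$ from $H(f)_1(ii)$ to extract an a priori upper bound on $\lambda$ in terms of $\hat\lambda_1 = \hat\lambda_1(p,\xi,\beta)$.

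Concretely: assume $u \in S_\lambda$; since $S_\lambda \subseteq D_+$ by Proposition \ref{prop8}, using $\hat u_1$ as test function yields
$$\int_\Omega (a(Du),D\hat u_1)_{\RR^N}\,dz + \int_\Omega (\xi-\lambda)u^{p-1}\hat u_1\,dz + \int_{\partial\Omega}\beta u^{p-1}\hat u_1\,d\sigma = \int_\Omega f(z,u)\hat u_1\,dz.$$
By $H(f)_1(i)(ii)$, for any $\epsilon>0$ there exists $c_\epsilon>0$ with $|f(z,x)|\le\epsilon x^{p-1}+c_\epsilon$ for all $x\ge 0$, so $\left|\int_\Omega f(z,u)\hat u_1\,dz\right|\le\epsilon\int_\Omega u^{p-1}\hat u_1\,dz+c_\epsilon\|\hat u_1\|_1$. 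The strategy is then to control the gradient cross term $\int(a(Du),D\hat u_1)\,dz$ and the remaining potential/boundary terms by means of the variational characterization (\ref{eq4}) of $\hat\lambda_1$, the growth/coercivity bounds on $a$ from Lemma \ref{lem2}, and an auxiliary a priori estimate on $\|Du\|_p$ obtained by testing (\ref{eqp}) with $u$ itself.

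Assembling these estimates, I expect to arrive at an inequality of the form
$$(\lambda-\hat\lambda_1-\epsilon)\int_\Omega u^{p-1}\hat u_1\,dz \le C,$$
where $C$ depends on $\hat u_1,\xi,\beta$ and the growth constants of $f$, but not on $\lambda$. Because $\hat u_1\in D_+$ is bounded below by a positive constant on $\overline\Omega$ and $u \in D_+$, the integral $\int_\Omega u^{p-1}\hat u_1\,dz$ is strictly positive; combined with a lower bound derived from the equation (or from the nonlinear maximum principle of Pucci--Serrin applied to $u$), this forces $\lambda\le\hat\lambda_1+\epsilon+C'$ for some explicit $C'$. Hence $\lambda^{*}\le\hat\lambda_1+\epsilon+C' < +\infty$.

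The main obstacle is the mismatch between $a(\cdot)$ and the pure $p$-Laplacian appearing in (\ref{eq3}). If $a(y)=|y|^{p-2}y$, the gradient cross term $\int(a(Du),D\hat u_1)\,dz$ is symmetric in $u$ and $\hat u_1$ and cancels neatly against the term obtained by testing the eigenfunction equation against $u$ (as in the semilinear treatment in \cite{21}); for the general nonhomogeneous $a$, no such clean cancellation is available. Overcoming this requires careful use of hypotheses $H(a)$, in particular $|a(y)|\le c_4(1+|y|^{p-1})$ from Lemma \ref{lem2}(b), $(a(y),y)_{\RR^N}\ge\frac{c_1}{p-1}|y|^p$ from Lemma \ref{lem2}(c), and the convexity structure in $H(a)(iv)$ to reduce the comparison to one controlled by the eigenvalue $\hat\lambda_1$.
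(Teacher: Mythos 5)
Your strategy---testing the weak formulation against the principal eigenfunction $\hat u_1=\hat u_1(p,\xi,\beta)$ and invoking the variational characterization (\ref{eq4}) of $\hat\lambda_1$---breaks down at exactly the point you flag as ``the main obstacle,'' and the tools you list do not overcome it. To reach $(\lambda-\hat\lambda_1-\epsilon)\int_\Omega u^{p-1}\hat u_1\,dz\le C$ you must bound the cross term $\int_\Omega(a(Du),D\hat u_1)_{\RR^N}\,dz$ from above by (essentially) $\hat\lambda_1\int_\Omega u^{p-1}\hat u_1\,dz$ plus controlled quantities. Even for the pure $p$-Laplacian this is achieved not by testing with $\hat u_1$ but with $\hat u_1^p/u^{p-1}$ via Picone's identity (this is precisely Proposition \ref{prop17} of the paper, and it additionally requires $f>0$, which $H(f)_1$ does not provide --- no global sign condition is imposed on $f$). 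For a general nonhomogeneous $a(\cdot)$ there is no Picone identity, and the estimates you cite give only $\int_\Omega(a(Du),D\hat u_1)\,dz\le c_4\|D\hat u_1\|_1+c_4\|Du\|_p^{p-1}\|D\hat u_1\|_p$; the auxiliary estimate from testing with $u$ yields $\|Du\|_p^p\le C_\lambda(\|u\|_p^p+1)$, so the cross term is controlled by $\|u\|_p^{p-1}$, which cannot be absorbed into $\int_\Omega u^{p-1}\hat u_1\,dz\sim\|u\|_{p-1}^{p-1}$ (H\"older runs in the wrong direction, and there is no a priori bound on $\|u\|_p$ over $S_\lambda$). The uniform positive lower bound on $\int_\Omega u^{p-1}\hat u_1\,dz$ needed to divide it out is also never produced, and it is not automatic since $u=u_\lambda$ varies with $\lambda$.

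The paper avoids the eigenfunction entirely. Since $f(z,x)/x^{p-1}\to0$ as $x\to+\infty$ and $f(z,x)/x^{q-1}\to+\infty$ as $x\to0^+$, one fixes $\tilde\lambda>0$ with $(\tilde\lambda-\xi(z))x^{p-1}+f(z,x)\ge x^{p-1}$ for almost all $z\in\Omega$ and all $x\ge0$. If some $\lambda>\tilde\lambda$ admitted a solution $u\in S_\lambda\subseteq D_+$ with $m=\min_{\overline\Omega}u>0$, then a pointwise comparison of the constant function $m_\delta=m+\delta$ with $u$ (using hypothesis $H(f)_1(iv)$ and the surplus $(\lambda-\tilde\lambda)m^{p-1}>0$ to beat the error $\gamma(\delta)$) places the pair in the setting of the strong comparison principle, Proposition \ref{prop5}, giving $u-m_\delta\in{\rm int}\,C_+$ and contradicting the definition of $m$. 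This argument is local and pointwise, hence insensitive to the nonhomogeneity of $a(\cdot)$. If you wish to pursue the eigenfunction route, it succeeds only in the $p$-Laplacian setting with $f>0$, where it yields the sharper identification $\lambda^*=\hat\lambda_1$.
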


\begin{proof}
	Hypotheses $H(\xi),H(f)_1$ imply that for large enough $\tilde\lambda>0$ we have
	\begin{equation}\label{eq17}
		(\tilde\lambda-\xi(z))x^{p-1} + f(z,x) \geq x^{p-1}\ \mbox{for almost all}\ z\in\Omega,\ \mbox{and for all}\ x\geq0.
	\end{equation}
	
	Let $\lambda>\tilde\lambda$ and suppose that $\lambda\in\mathcal{L}$. Then by Proposition \ref{prop8} we can find $u\in S_\lambda\subseteq D_+$. We set
	\begin{equation}\label{eq18}
		m=\min_{\overline\Omega}u>0\quad \mbox{(since $u\in D_+$)}.
	\end{equation}
	
	For $\delta>0$ we set $m_\delta=m+\delta>0$. Also, let $\rho=||u||_\infty$ and let $\hat\xi_\rho>0$ be as postulated by hypothesis $H(f)_1(iv)$. We can always take $\hat\xi_\rho>\max\{\lambda,||\xi||_\infty\}$. We have that for almost all $z\in\Omega$ the function $x\mapsto(\lambda+\hat\xi_\rho)x^{p-1} + f(z,x)$ is nondecreasing on $[0,\rho]$. We have
	$$
	\begin{array}{ll}
		& -{\rm div}\,a(Dm_\delta) + (\xi(z) + \hat\xi_\rho)m^{p-1}_\delta \\
		\leq & (\xi(z)+\hat\xi_\rho)m^{p-1} + \gamma(\delta)\ \mbox{with}\ \gamma(\delta)\rightarrow0^+\ \mbox{as}\ \delta\rightarrow0^+ \\
		\leq & (\tilde\lambda + \hat\xi_\rho)m^{p-1} + f(z,m) + \gamma(\delta)\ \mbox{(see (\ref{eq17}))} \\
		= & (\lambda+\hat\xi_\rho)m^{p-1} + f(z,m) - (\lambda-\tilde\lambda)m^{p-1} + \gamma(\delta) \\
		\leq & (\lambda + \hat\xi_\rho)m^{p-1} + f(z,m)\ \mbox{for small enough}\ \delta>0\ \mbox{(so that $\gamma(\delta)<(\lambda-\tilde\lambda)m^{p-1}$)} \\
		\leq & (\lambda+\hat\xi_\rho)u^{p-1} + f(z,u)\ \mbox{(see (\ref{eq18}))}\\
		= & -{\rm div}\,a(Du) + (\xi(z) + \hat\xi_\rho)u^{p-1}.
	\end{array}
	$$
	
	Let
	$$
	\begin{array}{ll}
		h_1(z)=(\xi(z)+\hat\xi_\rho)m^{p-1}+\gamma(\delta) \\
		h_2(z)=(\lambda+\hat\xi_\rho)u^{p-1} + f(z,u).
	\end{array}
	$$
	
	Evidently, $h_1,h_2\in L^\infty(\Omega)$ and for $\delta>0$ small we have
	$$
	0<\tilde\gamma\leq h_2(z)-h_1(z)\ \mbox{for almost all}\ z\in\Omega.
	$$
	
	So, by Proposition \ref{prop5} for small enough $\delta>0$ we have
	$$
	u-m_\delta\in {\rm int}\,C_+,
	$$
	a contradiction to (\ref{eq18}). Therefore $\lambda\not\in\mathcal{L}$ and so $\lambda^*\leq\tilde\lambda<+\infty$.
\end{proof}

Fix $\lambda<\lambda^*$. Then by Proposition \ref{prop9} we have $\lambda\in\mathcal{L}$. We will show that $S_\lambda\subseteq D_+$ admits a smallest element. Let $r\in(p,p^*)$. On account of hypotheses $H(f)_1$ we can find $c_9>0$ and $c_{10}=c_{10}(\lambda)>0$ both large enough such that
\begin{equation}\label{eq19}
	\lambda x^{p-1} + f(z,x)\geq c_9x^{q-1} - c_{10}x^{r-1}\ \mbox{for almost all}\ z\in\Omega,\ \mbox{and for all}\ x\geq0.
\end{equation}

Motivated by this one-sided growth condition on the reaction of problem \eqref{eqp}, we consider the following auxiliary nonlinear nonhomogeneous Robin problem
\begin{equation}\label{eq20}
	\left\{
	\begin{array}{ll}
		-{\rm div}\,a(Du(z)) + |\xi(z)|u^{p-1} = c_9u(z)^{q-1} - c_{10}u(z)^{r-1}\ \mbox{in}\ \Omega, \\
		\frac{\partial u}{\partial n_a} + \beta(z)u^{p-1}=0\ \mbox{on}\ \partial\Omega,\ u>0.
	\end{array}
	\right\}
\end{equation}

\begin{prop}\label{prop11}
	If hypotheses $H(a),H(\xi),H(\beta)$ hold and $c_9,c_{10}>0$ are both large enough, then problem (\ref{eq20}) admits a unique solution $u^\lambda_*\in D_+$
\end{prop}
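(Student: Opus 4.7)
The plan has three parts: existence of a minimizer of a suitable energy functional, promotion of the minimizer to $D_+$ via regularity and the maximum principle, and uniqueness via a Picone/Diaz--Saa inequality adapted to the operator $a(\cdot)$ under hypothesis $H(a)(iv)$.

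\textbf{Existence.} I would work with the $C^1$-energy
$$\psi_*(u)=\int_\Omega G(Du)\,dz+\frac{1}{p}\int_\Omega|\xi(z)||u|^p\,dz+\frac{1}{p}\int_{\partial\Omega}\beta(z)|u|^p\,d\sigma-\frac{c_9}{q}||u^+||_q^q+\frac{c_{10}}{r}||u^+||_r^r$$
on $W^{1,p}(\Omega)$, chosen so that on $\{u\ge 0\}$ its Euler equation is precisely \eqref{eq20}. Corollary \ref{cor3} combined with $r>p$ yields coercivity, and compactness of the Sobolev and trace embeddings gives sequential weak lower semicontinuity. Hence a minimizer $u^\lambda_*$ exists. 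To rule out $u^\lambda_*=0$, I would mimic the computation in Proposition \ref{prop8}: using $H(a)(iv)$ to bound $G(Dy)$ from above by $(\tilde c_0/q)|y|^q$ for $|y|\le\delta$ and testing with $t\hat u_1(q,|\xi|/\tilde c_0,\beta/\tilde c_0)\in D_+$ for small $t>0$, one obtains $\psi_*(t\hat u_1)\le(t^q/q)(\tilde c_0\hat\lambda_1-c_9)+O(t^p)+O(t^r)$, which is negative once $c_9>\tilde c_0\hat\lambda_1$ and $t$ is small. Nonnegativity of $u^\lambda_*$ follows by testing the Euler equation with $-(u^\lambda_*)^-$. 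The nonlinear $L^\infty$-estimate of Papageorgiou \& R\u{a}dulescu \cite{20} together with Lieberman \cite{12}'s regularity yields $u^\lambda_*\in C_+\setminus\{0\}$, after which the Pucci \& Serrin \cite{24} maximum principle, applied to a shifted operator $u\mapsto-{\rm div}\,a(Du)+(|\xi|+\hat\eta)u^{p-1}$ that absorbs the negative $-c_{10}u^{r-1}$ contribution, gives $u^\lambda_*\in D_+$.

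\textbf{Uniqueness.} Suppose $u,v\in D_+$ are two solutions. Since both are bounded below on $\overline\Omega$ by positive constants, the functions $h_1=(u^q-v^q)/u^{q-1}$ and $h_2=(v^q-u^q)/v^{q-1}$ lie in $W^{1,p}(\Omega)$. Testing the weak form of \eqref{eq20} for $u$ with $h_1$ and for $v$ with $h_2$ and adding, the $c_9$-contributions cancel pairwise and the rest of the reaction yields
\begin{align*}
&-c_{10}\int_\Omega\bigl[u^{r-q}(u^q-v^q)+v^{r-q}(v^q-u^q)\bigr]dz\\
&=-c_{10}\int_\Omega(u^{r-q}-v^{r-q})(u^q-v^q)\,dz\le 0,
\end{align*}
since $r>q$. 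The $|\xi|$ volume and $\beta$ boundary terms combine into $\int_\Omega|\xi|(u^{p-q}-v^{p-q})(u^q-v^q)\,dz+\int_{\partial\Omega}\beta(u^{p-q}-v^{p-q})(u^q-v^q)\,d\sigma\ge 0$, because $p\ge q$. Under the convexity of $t\mapsto G_0(t^{1/q})$ granted by $H(a)(iv)$, the divergence part obeys the generalized Picone inequality
$$\int_\Omega\bigl[a(Du)\cdot D((u^q-v^q)/u^{q-1})+a(Dv)\cdot D((v^q-u^q)/v^{q-1})\bigr]dz\ge 0.$$
Summing the three sign facts forces $\int_\Omega(u^{r-q}-v^{r-q})(u^q-v^q)\,dz=0$, whence $u=v$ almost everywhere by strict monotonicity of $s\mapsto s^{r-q}$.

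The chief technical obstacle is precisely this generalized Picone/Diaz--Saa inequality: its derivation is a pointwise convexity computation for the map $w\mapsto G(Dw^{1/q})$ hinging on the convexity of $t\mapsto G_0(t^{1/q})$ from $H(a)(iv)$ (generalizing the classical Allegretto--Huang identity for the $p$-Laplacian), together with an approximation argument to justify the test functions $h_1,h_2$ up to $\partial\Omega$. Everything else (coercivity, weak lower semicontinuity, regularity, the strong maximum principle, sign tests) reduces to tools already assembled in Section 2.
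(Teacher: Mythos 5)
Your proposal follows essentially the same route as the paper's proof: direct minimization of the same truncated energy (the paper additionally inserts a term $\frac{1}{p}\|u^-\|^p_p$ into $\Psi$, which you should add as well, since without it coercivity can fail in the admissible degenerate case $\xi\equiv 0$, $\beta\equiv 0$), then nonlinear regularity plus the Pucci--Serrin maximum principle to get $u^\lambda_*\in D_+$, and a Diaz--Saa argument for uniqueness. The ``generalized Picone inequality'' you single out as the chief obstacle is exactly the step the paper carries out: it is the monotonicity of the G\^ateaux derivative of the convex functional $l(w)=\int_\Omega G(Dw^{1/q})\,dz+\frac{1}{p}\int_\Omega|\xi(z)|w^{p/q}dz+\frac{1}{p}\int_{\partial\Omega}\beta(z)w^{p/q}d\sigma$, whose convexity is derived from $H(a)(iv)$ via Lemma 1 of Diaz \& Saa, precisely as you anticipate.
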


\begin{proof}
	Let $\Psi:W^{1,p}(\Omega)\rightarrow\RR$ be the $C^1$-functional defined by
	$$
	\begin{array}{ll}
		\Psi(u)=\int_\Omega G(Du)dz + \frac{1}{p}\int_\Omega|\xi(z)||u|^pdz + \frac{1}{p}\int_{\partial\Omega}\beta(z)|u|^pd\sigma + \frac{1}{p}||u^-||^p_p \\
		+\frac{c_{10}}{r}||u^+||^r_r - \frac{c_9}{q}||u^+||^q_q\ \mbox{for all}\ u\in W^{1,p}(\Omega).
	\end{array}
	$$
	
	By Corollary \ref{cor3} and the fact that $q\leq p<r$, by taking $c_{10}>0$ large enough (see (\ref{eq19})), we see that $\Psi_\lambda(\cdot)$ is coercive. Also, it is sequentially weakly lower semicontinuous. So, we can find $u^\lambda_*\in W^{1,p}(\Omega)$ such that
	\begin{equation}\label{eq21}
		\Psi(u^\lambda_*)=\inf\left\{\Psi(u):u\in W^{1,p}(\Omega)\right\}.
	\end{equation}
	
	On account of hypothesis $H(f)(iii)$, we can choose $c_9>0$ large enough so that
	$$
	\begin{array}{ll}
		& \Psi(u^\lambda_*)<0 = \Psi(0)\ \mbox{(recall that $q\leq p<r$)} \\
		\Rightarrow & u^\lambda_*\neq0.
	\end{array}
	$$
	
	From (\ref{eq21}) we have
	\begin{equation}\label{eq22}
	\begin{array}{ll}
		& \Psi'(u^\lambda_*)=0, \\
		\Rightarrow & \langle A(u^\lambda_*),h\rangle + \int_\Omega|\xi(z)||u^\lambda_*|^{p-2}u^\lambda_*hdz + \int_{\partial\Omega}\beta(z)|u^\lambda_*|^{p-2}u^\lambda_*hd\sigma \\
		& -\int_\Omega((u^\lambda_*)^-)^{p-1}hdz \\
		= & c_9\int_\Omega((u^\lambda_*)^+)^{q-1}hdz - c_{10}\int_\Omega((u^\lambda_*)^+)^{r-1}hdz\ \mbox{for all}\ h\in W^{1,p}(\Omega).
	\end{array}
	\end{equation}
	
	In (\ref{eq22}) we choose $h=-(u^\lambda_*)^{-}\in W^{1,p}(\Omega)$. Then
	$$
	\begin{array}{ll}
		& \frac{c_1}{p-1}||D(u^\lambda_*)^-||^p_p + \int_\Omega(|\xi(z)|+1)((u^\lambda_*)^-)^{p}dz\leq0\ \mbox{(see Lemma \ref{lem2} and hypothesis $H(\beta)$)} \\
		\Rightarrow & u^\lambda_*\geq0,\ u^\lambda_*\neq0.
	\end{array}
	$$
	
	It follows  from (\ref{eq22}) that $u^\lambda_*$ is a positive solution of (\ref{eq20}). The nonlinear regularity theory implies that $u^\lambda_*\in C_+\backslash\{0\}$. Moreover, we have
	$$
	{\rm div}\,a(Du^\lambda_*(z))\leq\left(||\xi||_\infty + c_{10}||u^\lambda_*||^{r-p}_\infty\right)u^\lambda_*(z)\ \mbox{for almost all}\ z\in\Omega.
	$$
	
	Next, we show that this positive solution of (\ref{eq20}) is unique. For this purpose we introduce the functional $l:L^1(\Omega)\rightarrow\overline\RR=\RR\cup\{+\infty\}$ defined by
	$$
	l(u)=\left\{
	\begin{array}{ll}
	\int_\Omega G(Du^{\frac{1}{q}})dz + \frac{1}{p}\int_\Omega|\xi(z)|u^\frac{p}{q}dz + \frac{1}{p}\int_{\partial\Omega}\beta(z)u^\frac{p}{q}d\sigma & \mbox{if}\ u\geq0,\ u^\frac{1}{q}\in W^{1,p}(\Omega) \\
	+\infty & \mbox{otherwise}.
	\end{array}
	\right.
	$$
	
	Here, $q\leq p$ is as in hypothesis $H(a)(iv)$. Let ${\rm dom}\,l=\{u\in L^1(\Omega):l(u)<+\infty\}$ (the effective domain of $l(\cdot)$). Let $u_1,u_2\in {\rm dom}\,l$ and consider $u=[(1-t)u_1+tu_2]^\frac{1}{q}$ with $t\in[0,1]$. From Lemma 1 of Diaz \& Saa \cite{6}, we have
	$$
	\begin{array}{ll}
	& |Du(z)|\leq[(1-t)|Du_1(z)^\frac{1}{q}|^q + t|Du_2(z)^\frac{1}{q}|^q]^\frac{1}{q}\ \mbox{for almost all}\ z\in\Omega \\
	\Rightarrow & G_0(|Du(z)|)\leq G_0([(1-t)|Du_1(z)^\frac{1}{q}|^p + t|Du_2(z)^\frac{1}{q}|^q]^\frac{1}{q})\ \mbox{(since $G_0(\cdot)$ is increasing)} \\
	& \leq (1-t)G_0(|Du_1(z)^\frac{1}{q}|) + tG_0(|Du_2(z)^\frac{1}{q}|)\ \mbox{for almost all}\ z\in\Omega\ \\
	& \mbox{(see hypotheses $H(a)(iv)$)} \\
	\Rightarrow & G(Du(z))\leq(1-t)G(Du_1(z)^\frac{1}{q}) + tG(Du_2(z)^\frac{1}{q})\ \mbox{for almost all}\ z\in\Omega, \\
	\Rightarrow & {\rm dom}\,l\ni u\mapsto\int_\Omega G(Du^\frac{1}{q})dz\ \mbox{is convex}.
	\end{array}
	$$
	
	Since $q\leq p$ and $\beta\geq0$ (see hypotheses $H(\beta)$), we deduce that the mapping
	$$
	{\rm dom}\,l\ni u\mapsto\int_\Omega|\xi(z)|u^\frac{p}{q}dz + \int_{\partial\Omega}\beta(z)u^\frac{p}{q}d\sigma\ \mbox{is convex}.
	$$
	
	Therefore, we conclude that $l(\cdot)$ is convex and by Fatou's lemma it is also lower semicontinuous.
	
	Assume that $v^\lambda_*$ is another positive solution for problem (\ref{eq20}). Again, we can show that $v^\lambda_*\in D_+$. Let $h\in C^1(\overline\Omega)$. For $|t|<1$ small enough we have
	$$
	(u^\lambda_*)^q + th\in {\rm dom}\,l\ \mbox{and}\ (v^\lambda_*)^q + th\in {\rm dom}\,l.
	$$
	
	It is easily seen that $l(\cdot)$ is G\^ateaux differentiable at $(u^\lambda_*)^q$ and at $(v^\lambda_*)^q$ in the direction $h$. Using the chain rule and the nonlinear Green identity (see Gasinski \& Papageorgiou \cite[p. 210]{8}), we obtain
	$$
	\begin{array}{ll}
		l'((u^\lambda_*)^q)(h) = \frac{1}{q}\int_\Omega\frac{-{\rm div}\,a(Du^\lambda_*) + |\xi(z)|(u^\lambda_*)^{p-1}}{(u^\lambda_*)^{q-1}}hdz \\
		l'((v^\lambda_*)^q)(h) = \frac{1}{q}\int_\Omega\frac{-{\rm div}\,a(Dv^\lambda_*) + |\xi(z)|(v^\lambda_*)^{p-1}}{(v^\lambda_*)^{q-1}}hdz.
	\end{array}
	$$
	
	The convexity of $l(\cdot)$ implies the monotonicity of $l'(\cdot)$. Therefore
	$$
	\begin{array}{ll}
		0 & \leq \int_\Omega\left[\frac{-{\rm div}\,a(Du^\lambda_*) + |\xi(z)|(u^\lambda_*)^{p-1}|}{(u^\lambda_*)^{q-1}} - \frac{-{\rm div}\,a(Dv^\lambda_*) + |\xi(z)|(v^\lambda_*)^{p-1}|}{(v^\lambda_*)^{q-1}}\right]((u^\lambda_*)^q-(v^\lambda_*)^q)dz\\
		& = \int_\Omega c_{10}[(u^\lambda_*)^{r-q}-(v^\lambda_*)^{r-q}](u^\lambda_*)^q-(v^\lambda_*)^q)dz\ \mbox{(see (\ref{eq20}))} \\
		\Rightarrow & u^\lambda_*=v^\lambda_*\ \mbox{(recall that $q\leq p<r$).}
	\end{array}
	$$
	
	So, the positive solution $u^\lambda_*\in D_+$ of problem (\ref{eq20}) is unique.
\end{proof}

\begin{prop}\label{prop12}
	If hypotheses $H(a), H(\xi), H(\beta), H(f)_1$ hold and $\lambda<\lambda^*$, then $u^\lambda_*\leq u$ for all $u\in S_\lambda$.
\end{prop}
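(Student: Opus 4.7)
The plan is to realize $u^\lambda_*$ as the minimizer of a truncated variant of the functional $\Psi$ from Proposition \ref{prop11} with truncation from above at $u$, and then invoke the uniqueness part of Proposition \ref{prop11}. Fix $\lambda<\lambda^*$ and let $u\in S_\lambda\subseteq D_+$. Define the Carath\'eodory function
$$
g_u(z,x)=\left\{
\begin{array}{ll}
0 & \mbox{if}\ x<0,\\
c_9 x^{q-1}-c_{10} x^{r-1} & \mbox{if}\ 0\le x\le u(z),\\
c_9 u(z)^{q-1}-c_{10} u(z)^{r-1} & \mbox{if}\ x>u(z),
\end{array}
\right.
$$
set $G_u(z,x)=\int_0^x g_u(z,s)\,ds$, and consider
$$
\hat\Psi(v)=\int_\Omega G(Dv)\,dz+\tfrac{1}{p}\int_\Omega|\xi(z)||v|^p\,dz+\tfrac{1}{p}\int_{\partial\Omega}\beta(z)|v|^p\,d\sigma+\tfrac{1}{p}\|v^-\|_p^p-\int_\Omega G_u(z,v)\,dz.
$$
Since $G_u(z,\cdot)$ is bounded by a function depending only on $u$, Corollary \ref{cor3} gives that $\hat\Psi$ is coercive; it is also sequentially weakly lower semicontinuous by the Sobolev embedding and the compactness of the trace map. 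Hence $\hat\Psi$ attains its infimum at some $\tilde u\in W^{1,p}(\Omega)$.

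Next I would establish $\tilde u\in[0,u]\setminus\{0\}$. The nontriviality $\tilde u\ne 0$ follows exactly as in the proofs of Propositions \ref{prop8} and \ref{prop11}: using $\hat u_1(q,\xi_0,\beta_0)\in D_+$ and hypotheses $H(a)(iv)$, $H(f)_1(iii)$, one checks $\hat\Psi(t\hat u_1)<0$ for small $t>0$ (here the $q$-concave term $-\tfrac{c_9}{q}\|v^+\|_q^q$ dominates for small $v$, exploiting $q\le p<r$). Testing $\hat\Psi'(\tilde u)=0$ with $-\tilde u^-\in W^{1,p}(\Omega)$ yields $\tilde u\ge 0$ via Lemma \ref{lem2} and $\beta\ge 0$. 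For the upper bound, test with $(\tilde u-u)^+\in W^{1,p}(\Omega)$. On $\{\tilde u>u\}$, the truncation in $g_u$ and inequality (\ref{eq19}) give
$$
g_u(z,\tilde u)=c_9 u^{q-1}-c_{10}u^{r-1}\le \lambda u^{p-1}+f(z,u).
$$
Combining the weak formulation for $\tilde u$ with the weak formulation of $u\in S_\lambda$ tested against the same function, and noting that $|\xi(z)|\tilde u^{p-1}-\xi(z)u^{p-1}\ge |\xi(z)|(\tilde u^{p-1}-u^{p-1})\ge 0$ and $\beta(z)(\tilde u^{p-1}-u^{p-1})\ge 0$ on $\{\tilde u>u\}$, I obtain
$$
\langle A(\tilde u)-A(u),(\tilde u-u)^+\rangle\le 0.
$$
By the monotonicity (in fact strict monotonicity) of $A$ (Proposition \ref{prop4}), this forces $(\tilde u-u)^+=0$, i.e.\ $\tilde u\le u$.

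Finally, since $0\le\tilde u\le u$, the truncation in $g_u$ is inactive and $\hat\Psi'(\tilde u)=0$ reduces to
$$
\langle A(\tilde u),h\rangle+\int_\Omega|\xi(z)|\tilde u^{p-1}h\,dz+\int_{\partial\Omega}\beta(z)\tilde u^{p-1}h\,d\sigma=\int_\Omega\bigl(c_9\tilde u^{q-1}-c_{10}\tilde u^{r-1}\bigr)h\,dz
$$
for all $h\in W^{1,p}(\Omega)$. Thus $\tilde u$ is a positive solution of the auxiliary problem (\ref{eq20}), and the uniqueness part of Proposition \ref{prop11} gives $\tilde u=u^\lambda_*$. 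Therefore $u^\lambda_*=\tilde u\le u$, and since $u\in S_\lambda$ was arbitrary, the conclusion follows.

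I expect the main technical point to be the upper-bound step, because it requires a careful reconciliation of the term $|\xi(z)|\tilde u^{p-1}$ appearing in the auxiliary problem with the term $\xi(z)u^{p-1}$ (of indefinite sign) appearing in $(P_\lambda)$; the pointwise inequality $|\xi|\tilde u^{p-1}-\xi u^{p-1}\ge |\xi|(\tilde u^{p-1}-u^{p-1})$ on $\{\tilde u>u\}$ is exactly what makes the comparison go through, and it is the reason for choosing $|\xi|$ in the auxiliary equation (\ref{eq20}).
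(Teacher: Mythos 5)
Your overall strategy is the same as the paper's: truncate the auxiliary reaction $c_9x^{q-1}-c_{10}x^{r-1}$ from above at $u$, minimize the associated functional, show the minimizer lies in $[0,u]\setminus\{0\}$, and then identify it with $u^\lambda_*$ through the uniqueness statement of Proposition \ref{prop11}. However, by dropping the $\eta$-perturbation ($\eta>||\xi||_\infty$) that the paper builds into both the functional and the truncation, you create two genuine gaps.

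First, the coercivity of $\hat\Psi$ does not follow from the boundedness of $g_u$. After truncation, $|G_u(z,x)|\leq C_u(1+|x|)$, so Corollary \ref{cor3} only yields
$$\hat\Psi(v)\geq\frac{c_1}{p(p-1)}||Dv||_p^p+\frac{1}{p}\int_\Omega|\xi(z)||v|^pdz+\frac{1}{p}\int_{\partial\Omega}\beta(z)|v|^pd\sigma+\frac{1}{p}||v^-||_p^p-C_u(1+||v||_1).$$
The right-hand side controls $||Dv||_p$ and $||v^-||_p$ but not $||v^+||_p$: in the admissible degenerate case $\xi\equiv0$, $\beta\equiv0$ (Neumann, no potential), taking $v\equiv n$ gives $\hat\Psi(n)=-\int_\Omega G_u(z,n)dz$, which tends to $-\infty$ whenever $c_9u(z)^{q-1}-c_{10}u(z)^{r-1}>0$ on a set of positive measure (for instance when $||u||_\infty$ is small). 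So $\hat\Psi$ need not be coercive. Second, in the upper-bound step, Proposition \ref{prop4} gives only monotonicity of $A$, and $A$ is in fact \emph{not} strictly monotone on $W^{1,p}(\Omega)$, since it annihilates constants ($A(v)=A(v+c)$). From $\langle A(\tilde u)-A(u),(\tilde u-u)^+\rangle\leq0$ you may conclude $D(\tilde u-u)^+=0$, hence $(\tilde u-u)^+\equiv c\geq0$, but ruling out $c>0$ requires a strictly positive zeroth-order (or boundary) contribution, which again disappears when $\xi\equiv0$ and $\beta\equiv0$. Both gaps are repaired by the paper's device: one works with $\zeta(v)=\frac{1}{p}\mu(v)+\frac{\eta}{p}||v||_p^p-\int_\Omega\Theta(z,v)dz$ and puts $\eta x^{p-1}$ inside the truncation (see (\ref{eq23})), so that the zeroth-order coefficient $\xi(z)+\eta\geq\eta-||\xi||_\infty>0$ simultaneously yields coercivity and makes the test with $(\tilde u-u)^+$ conclusive. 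Your pointwise inequality $|\xi|\tilde u^{p-1}-\xi u^{p-1}\geq|\xi|(\tilde u^{p-1}-u^{p-1})$ on $\{\tilde u>u\}$ is correct and does reconcile the $|\xi|$ of the auxiliary problem with the signed $\xi$ of \eqref{eqp}, but it cannot substitute for the missing uniformly positive term.
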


\begin{proof}
	From Proposition \ref{prop9} we know that $\lambda\in\mathcal{L}$. Let $u\in S_\lambda\subseteq D_+$ (see Proposition \ref{prop8}). Again we fix $\eta>||\xi||_\infty$ and consider the Carath\'eodory function $\vartheta:\Omega\times\RR\rightarrow\RR$ defined by
	\begin{equation}\label{eq23}
		\vartheta(z,x)=\left\{
		\begin{array}{ll}
			0 & \mbox{if}\ x<0 \\
			c_9x^{q-1} - c_{10}x^{r-1} + \eta x^{p-1} & \mbox{if}\ 0\leq x\leq u(z) \\
			k_9 u(z)^{q-1} - c_{10}u(z)^{r-1} +\eta u(z)^{p-1} & \mbox{if}\ u(z)<x.
		\end{array}
		\right.
	\end{equation}
	
	We set $\Theta(z,x)=\int^x_0\vartheta(z,s)ds$ and consider the $C^1$-functional $\zeta:W^{1,p}(\Omega)\rightarrow\RR$ defined by
	$$
	\zeta(u) = \frac{1}{p}\mu(u) + \frac{\eta}{p}||u||^p_p - \int_\Omega\Theta(z,u)dz\ \mbox{for all}\ u\in W^{1,p}(\Omega).
	$$
	
	As before, $\zeta(\cdot)$ is coercive and sequentially weakly lower semicontinuous. So, we can find $\tilde{u}^\lambda_*\in W^{1,p}(\Omega)$ such that
	\begin{equation}\label{eq24}
		\zeta(\tilde{u}^\lambda_*)=\inf\{\zeta(u):u\in W^{1,p}(\Omega)\}.
	\end{equation}
	
	Since $q\leq p<r$, for $c_9, c_{10}>0$ large enough as in Proposition \ref{prop10}, we have
	$$
	\begin{array}{ll}
		& \zeta(\tilde{u}^\lambda_*)<0=\zeta(0), \\
		\Rightarrow & \tilde{u}^\lambda_*\neq0.
	\end{array}
	$$
	
	From (\ref{eq24}) we have
	\begin{equation}\label{eq25}
		\begin{array}{ll}
		& \zeta'(\tilde{u}^\lambda_*)=0, \\
		\Rightarrow & \langle A(\tilde{u}^\lambda_*),h\rangle + \int_\Omega[\xi(z)+\eta]|\tilde{u}^\lambda_*|^{p-2}\tilde{u}^\lambda_*hdz + \int_{\partial\Omega}\beta(z)|\tilde{u}^\lambda_*|^{p-2}\tilde{u}^\lambda_*hd\sigma \\
		& = \int_\Omega\vartheta(z,\tilde{u}^\lambda_*)hdz\ \mbox{for all}\ h\in W^{1,p}(\Omega).
		\end{array}		
	\end{equation}
	
	Let $h=-(\tilde{u}^\lambda_*)^-\in W^{1,p}(\Omega)$ in (\ref{eq25}). Then
	$$
	\begin{array}{ll}
		& \frac{c_1}{p-1}||D(\tilde{u}^\lambda_*)^-||^p_p + \int_\Omega[\xi(z)+\eta]((\tilde{u}^\lambda_*)^-)^pdz\leq0\ \mbox{(see Lemma \ref{lem2}, hypothesis $H(\beta)$, and (\ref{eq23}))} \\
		\Rightarrow & \tilde{u}^\lambda_*\geq0,\ \tilde{u}^\lambda_*\neq0.
	\end{array}
	$$
	
	Also, let $h=(\tilde{u}^\lambda_*-u)^+\in W^{1,p}(\Omega)$ in (\ref{eq25}). Then
	\begin{eqnarray*}
		&& \langle A(\tilde{u}^\lambda_*),(\tilde{u}^\lambda_*-u)^+\rangle + \int_\Omega[\xi(z)+\eta](\tilde{u}^\lambda_*)^{p-1}(\tilde{u}^\lambda_*-u)^+dz + \int_{\partial\Omega}\beta(z)(\tilde{u}^\lambda_*)^{p-1}(\tilde{u}^\lambda_*-u)^+d\sigma \\
		&= & \int_\Omega(c_9u^{q-1}-c_{10}u^{r-1}+\eta u^{p-1})(\tilde{u}^\lambda_*-u)^+dz\ \mbox{(see (\ref{eq23}))} \\
		&\leq & \int_\Omega [(\lambda+\eta)u^{p-1} + f(z,u)](\tilde{u}^\lambda_*-u)^+dz\ \mbox{(see (\ref{eq19}))} \\
		&= & \langle A(u),(\tilde{u}^\lambda_*-u)^+\rangle + \int_\Omega[\xi(z)+\eta]u^{p-1}(\tilde{u}^\lambda_*-u)dz + \int_{\partial\Omega}\beta(z)u^{p-1}(\tilde{u}^\lambda_*-u)^+d\sigma\\
		&&\mbox{(since $u\in S_\lambda$)}\\
		\Rightarrow && \tilde{u}^\lambda_*\leq u.
	\end{eqnarray*}
	
	So, we have proved that
	\begin{equation}\label{eq26}
		\tilde{u}^\lambda_*\in[0,u],\ \tilde{u}^\lambda_*\neq0.
	\end{equation}
	
	Then from (\ref{eq23}), (\ref{eq25}), (\ref{eq26}) we infer that
	$$
	\begin{array}{ll}
		& \tilde{u}^\lambda_*\ \mbox{is a positive solution of (\ref{eq20})}, \\
		\Rightarrow & \tilde{u}^\lambda_*=u^\lambda_*\in D_+\ \mbox{(see Proposition \ref{prop11})}, \\
		\Rightarrow & u^\lambda_*\leq u\ \mbox{for all}\ u\in S_\lambda\ \mbox{(see (\ref{eq26}))}.
	\end{array}
	$$
The proof is complete.
\end{proof}

From Papageorgiou, R\u{a}dulescu \& Repov\v{s} \cite{21} (proof of Proposition \ref{prop7}), we know that the set $S_\lambda$ is downward directed (that is, if $u_1,u_2\in S_\lambda$, then we can find $u\in S_\lambda$ such that $u\leq u_1,u\leq u_2$).

\begin{prop}\label{prop13}
	If hypotheses $H(a), H(\xi), H(\beta), H(f)_1$ hold and $\lambda<\lambda^*$, then $S_\lambda$ admits a smallest element $\overline{u}_\lambda\in D_+$, that is,
	$$
	\overline{u}_\lambda\leq u\ \mbox{for all}\ u\in S_\lambda.
	$$
\end{prop}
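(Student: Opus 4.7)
The plan is to exploit that $S_\lambda$ is downward directed together with a standard selection result for downward directed families (see Hu--Papageorgiou or the argument used in Papageorgiou--R\u{a}dulescu--Repov\v{s} \cite{21}): this gives a decreasing sequence $\{u_n\}_{n\geq 1}\subseteq S_\lambda$ such that $\inf_{n\geq 1}u_n = \inf S_\lambda$ pointwise. The target $\overline{u}_\lambda$ will be the limit of this sequence, so the real work is to show that the limit still solves \eqref{eqp} and is strictly positive.

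First I would extract a priori bounds. Since $0\leq u_n\leq u_1\in D_+$, the sequence is uniformly bounded in $L^\infty(\Omega)$. Testing the weak formulation with $h=u_n$, using Lemma \ref{lem2}(c) to bound $\|Du_n\|_p^p$ from below by $\langle A(u_n),u_n\rangle$, together with the $L^\infty$-bound on $u_n$, the bound on $\xi$, the boundary term $\beta\geq 0$, and hypothesis $H(f)_1(i)$ applied on $[0,\|u_1\|_\infty]$, yields that $\{u_n\}$ is bounded in $W^{1,p}(\Omega)$. So along a subsequence $u_n\xrightarrow{w}\overline{u}_\lambda$ in $W^{1,p}(\Omega)$ and $u_n\to\overline{u}_\lambda$ in $L^p(\Omega)$ and in $L^p(\partial\Omega)$ (via compactness of the trace map).

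Next, I would pass to the limit in the weak formulation
$$\langle A(u_n),h\rangle + \int_\Omega \xi(z)u_n^{p-1}h\,dz + \int_{\partial\Omega}\beta(z)u_n^{p-1}h\,d\sigma = \int_\Omega[\lambda u_n^{p-1}+f(z,u_n)]h\,dz$$
for $h\in W^{1,p}(\Omega)$. Choosing $h=u_n-\overline{u}_\lambda$ and using the strong convergence of the lower order, boundary, and reaction terms (the latter by the dominated convergence theorem, since $u_n$ is $L^\infty$-bounded and $f$ is Carath\'eodory and locally bounded), I get $\limsup_n\langle A(u_n),u_n-\overline{u}_\lambda\rangle\leq 0$. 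Proposition \ref{prop4} ($(S)_+$-property of $A$) then promotes $u_n\to\overline{u}_\lambda$ strongly in $W^{1,p}(\Omega)$. Passing to the limit in the equation shows $\overline{u}_\lambda$ is a nonnegative weak solution of \eqref{eqp}.

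The key remaining step is to show $\overline{u}_\lambda\neq 0$, which is where Proposition \ref{prop12} does all the heavy lifting: since $u^\lambda_*\leq u_n$ for every $n$, we get $u^\lambda_*\leq \overline{u}_\lambda$, and because $u^\lambda_*\in D_+$ we conclude $\overline{u}_\lambda\neq 0$. The nonlinear regularity theory of Lieberman \cite{12} then gives $\overline{u}_\lambda\in C_+\setminus\{0\}$, and the nonlinear maximum principle of Pucci--Serrin \cite{24} (applied exactly as in the proof of Proposition \ref{prop8}, with $\rho=\|\overline{u}_\lambda\|_\infty$ and the corresponding $\hat\xi_\rho$ from $H(f)_1(iv)$) yields $\overline{u}_\lambda\in D_+$. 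Therefore $\overline{u}_\lambda\in S_\lambda$, and by construction $\overline{u}_\lambda\leq u$ for all $u\in S_\lambda$. The main obstacle I anticipate is the passage to the limit in the nonlinear principal term, but this is handled cleanly by the $(S)_+$-property; the positivity of the limit would be problematic without the auxiliary problem, but Proposition \ref{prop12} is precisely designed to circumvent that difficulty.
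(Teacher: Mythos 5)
Your proposal is correct and follows essentially the same route as the paper: selection of a decreasing minimizing sequence from the downward directed set $S_\lambda$, a priori bounds from $0\leq u_n\leq u_1$, strong convergence via the $(S)_+$-property of $A$, and positivity of the limit via the lower bound $u^\lambda_*\leq u_n$ from Proposition \ref{prop12}. The only differences are cosmetic (you spell out the $W^{1,p}$-bound and the dominated convergence step in slightly more detail).
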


\begin{proof}
	According to Lemma 3.10 of Hu \& Papageorgiou \cite[p. 178]{11}, we can find $\{u_n\}_{n\geq1}\subseteq S_\lambda$ such that
	$$
	\inf S_\lambda=\inf_{n\geq1}u_n.
	$$
	
	Moreover, since $S_\lambda$ is downward directed, we can choose $\{u_n\}_{n\geq1}\subseteq S_\lambda$ to be decreasing. We have
	\begin{equation}\label{eq27}
	\begin{array}{ll}
		\langle A(u_n),h\rangle + \int_\Omega\xi(z)u_n^{p-1}hdz + \int_{\partial\Omega}\beta(z)u^{p-1}_nhd\sigma = \lambda\int_\Omega u_n^{p-1}hdz +  \\
		\int_\Omega f(z,u_n)hdz\ \mbox{for all}\ h\in W^{1,p}(\Omega),\ \mbox{all}\ n\in\NN
	\end{array}
	\end{equation}
	\begin{equation}\label{eq28}
	0\leq u_n\leq u_1\ \mbox{for all}\ n\in\NN.
	\end{equation}
	
	From (\ref{eq27}), (\ref{eq28}), we infer that $\{u_n\}_{n\geq1}\subseteq W^{1,p}(\Omega)$ is bounded. So, we may assume that
	\begin{equation}\label{eq29}
		u_n\xrightarrow{w}\overline{u}_\lambda\ \mbox{in}\ W^{1,p}(\Omega)\ \mbox{and}\ u_n\rightarrow\overline{u}_\lambda\ \mbox{in}\ L^p(\Omega)\ \mbox{and in}\ L^p(\partial\Omega).
	\end{equation}
	
	In (\ref{eq27}) we choose $h=u_n-\overline{u}_\lambda\in W^{1,p}(\Omega)$, pass to the limit as $n\rightarrow\infty$ and use (\ref{eq29}). Then
	\begin{equation}\label{eq30}
		\begin{array}{ll}
			& \lim_{n\rightarrow\infty} \langle A(u_n), u_n-\overline{u}_\lambda\rangle=0, \\
			\Rightarrow & u_n\rightarrow\overline{u}_\lambda\ \mbox{in}\ W^{1,p}(\Omega)\ \mbox{(see Proposition \ref{prop4})}.
		\end{array}
	\end{equation}
	
	In (\ref{eq27}) we pass to the limit as $n\rightarrow\infty$ and use (\ref{eq30}). Then
	\begin{equation}\label{eq31}
	\begin{array}{ll}
		\langle A(\overline{u}_\lambda),h\rangle + \int_\Omega\xi(z)\overline{u}^{p-1}_\lambda hdz + \int_{\partial\Omega}\beta(z)\overline{u}^{p-1}_\lambda hd\sigma = \lambda\int_\Omega\overline{u}^{p-1}_\lambda hdz \\
		+ \int_\Omega f(z,\overline{u}_\lambda) hdz\ \mbox{for all}\ h\in\ W^{1,p}(\Omega).		
	\end{array}
	\end{equation}
	
	Moreover, from Proposition \ref{prop11} we have
	$$
	\begin{array}{ll}
		& u^\lambda_*\leq u_n\ \mbox{for all}\ n\in\NN, \\
		\Rightarrow & u^\lambda_*\leq \overline{u}_\lambda, \mbox{hence}\ \overline{u}_\lambda\neq0, \\
		\Rightarrow & \overline{u}_\lambda\in S_\lambda\ \mbox{(see (\ref{eq31})) and}\ \overline{u}_\lambda=\inf S_\lambda.
	\end{array}
	$$
The proof is now complete.
\end{proof}

In the next proposition we establish the monotonicity and continuity properties of the map $\mathcal{L}\ni\lambda\mapsto u_\lambda\in C^1(\overline\Omega)$.

\begin{prop}\label{prop14}
	If hypotheses $H(a), H(\xi), H(\beta), H(f)_1$ hold, then the map $\lambda\mapsto\overline{u}_\lambda$ from $\mathcal{L}$ into $C^1(\overline\Omega)$ is:
	\begin{itemize}
		\item [(a)] strictly increasing in the sense that
			$$
			\vartheta<\lambda\Rightarrow\overline{u}_\lambda-\overline{u}_\vartheta\in {\rm int}\, C_+;
			$$
		\item[(b)] left continuous, that is, if $\lambda_n\rightarrow\lambda^-$ with $\lambda\in\mathcal{L}$, then $\bar{u}_{\lambda_n}\rightarrow\bar{u}_{\lambda}$ in $C^1(\overline{\Omega})$.
	\end{itemize}
\end{prop}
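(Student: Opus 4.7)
The plan is to attack the two parts separately, using the strong comparison principle (Proposition~\ref{prop5}) for part (a) and standard nonlinear regularity together with the monotonicity from (a) for part (b).

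\textbf{Part (a).} Fix $\vartheta<\lambda$ with both in $\mathcal{L}$. First I would establish the weak inequality $\overline{u}_\vartheta\le\overline{u}_\lambda$. This follows from the construction in the proof of Proposition~\ref{prop9}: starting from $u_\lambda=\overline{u}_\lambda\in S_\lambda$, the truncation-minimization there produces a $u_\vartheta\in S_\vartheta$ with $u_\vartheta\le\overline{u}_\lambda$, and then $\overline{u}_\vartheta\le u_\vartheta\le\overline{u}_\lambda$ by minimality of $\overline{u}_\vartheta$. To upgrade this to $\overline{u}_\lambda-\overline{u}_\vartheta\in\operatorname{int}C_+$, I would invoke Proposition~\ref{prop5}. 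Put $\rho=\|\overline{u}_\lambda\|_\infty$, let $\hat{\xi}_\rho$ be as in $H(f)_1(iv)$, and choose a constant $M\geq\max\{\|\xi\|_\infty,\hat{\xi}_\rho-\vartheta\}$. Writing each equation in the form
$$-\operatorname{div}a(Du)+(\xi(z)+M)u^{p-1}=h_i(z),$$
with $h_1(z)=(\vartheta+M)\overline{u}_\vartheta^{p-1}+f(z,\overline{u}_\vartheta)$ and $h_2(z)=(\lambda+M)\overline{u}_\lambda^{p-1}+f(z,\overline{u}_\lambda)$, I compute
$$h_2(z)-h_1(z)=(\lambda-\vartheta)\overline{u}_\lambda^{p-1}+\bigl[(\vartheta+M)\overline{u}_\lambda^{p-1}+f(z,\overline{u}_\lambda)\bigr]-\bigl[(\vartheta+M)\overline{u}_\vartheta^{p-1}+f(z,\overline{u}_\vartheta)\bigr].$$
The bracketed difference is nonnegative by $H(f)_1(iv)$ applied on $[0,\rho]$ (since $\vartheta+M\ge\hat{\xi}_\rho$ and $\overline{u}_\vartheta\le\overline{u}_\lambda\le\rho$), so $h_2-h_1\ge(\lambda-\vartheta)(\min_{\overline\Omega}\overline{u}_\lambda)^{p-1}=:\tilde{\gamma}>0$ because $\overline{u}_\lambda\in D_+$. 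Since the $C^{1,\alpha}$-regularity from Lieberman's theory places both solutions in the class needed by Proposition~\ref{prop5}, the conclusion $\overline{u}_\lambda-\overline{u}_\vartheta\in\operatorname{int}C_+$ follows.

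\textbf{Part (b).} Let $\lambda_n\to\lambda^-$ with $\lambda\in\mathcal{L}$. Part (a) gives $\overline{u}_{\lambda_n}\le\overline{u}_\lambda$, so $\{\overline{u}_{\lambda_n}\}$ is uniformly bounded in $L^\infty(\Omega)$. Because the reaction $\lambda_n x^{p-1}+f(z,x)$ is then uniformly bounded on $[0,\|\overline{u}_\lambda\|_\infty]$, the global $C^{1,\alpha}$ estimate of Lieberman~\cite{12} yields a uniform bound $\|\overline{u}_{\lambda_n}\|_{C^{1,\alpha}(\overline\Omega)}\le C$ for some $\alpha\in(0,1)$. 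The compact embedding $C^{1,\alpha}(\overline\Omega)\hookrightarrow C^1(\overline\Omega)$ then allows me to extract a subsequence with $\overline{u}_{\lambda_n}\to\tilde u$ in $C^1(\overline\Omega)$. Passing to the limit in the weak formulation (using the continuity of $A$ and of the Nemytskii operator $N_f$ and the trace) gives $\tilde u\in S_\lambda\cup\{0\}$. To rule out $\tilde u=0$, fix any $\lambda_0<\inf_n\lambda_n$; Proposition~\ref{prop9} guarantees $\lambda_0\in\mathcal{L}$, and by part (a) $\overline{u}_{\lambda_0}\le\overline{u}_{\lambda_n}$ for every $n$, so $\tilde u\ge\overline{u}_{\lambda_0}\in D_+$. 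Hence $\tilde u\in S_\lambda$, giving $\overline{u}_\lambda\le\tilde u$, while $\tilde u\le\overline{u}_\lambda$ already holds by passing to the limit in $\overline{u}_{\lambda_n}\le\overline{u}_\lambda$. Therefore $\tilde u=\overline{u}_\lambda$; since the limit of every convergent subsequence is the same, the whole sequence $\overline{u}_{\lambda_n}\to\overline{u}_\lambda$ in $C^1(\overline\Omega)$.

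\textbf{Main obstacle.} The delicate step is the strong comparison in part~(a): one must package the equations in a common form with a nonnegative potential $k(z)$, and simultaneously exploit the local monotonicity of $x\mapsto f(z,x)+\hat{\xi}_\rho x^{p-1}$ to absorb the sign-indefinite terms so that $h_2-h_1$ is uniformly bounded away from zero. Getting the same $M$ to serve both purposes (making $\xi+M\ge0$ and $\vartheta+M\ge\hat{\xi}_\rho$) is the point where the hypotheses $H(\xi)$ and $H(f)_1(iv)$ interact critically, and it is essential that $\overline{u}_\lambda\in D_+$ so that $\min_{\overline\Omega}\overline{u}_\lambda>0$ provides the strict positivity $\tilde\gamma>0$ required by Proposition~\ref{prop5}.
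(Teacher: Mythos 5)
Your proposal is correct and follows essentially the same route as the paper: in part (a) you reduce to the weak ordering $\overline{u}_\vartheta\le\overline{u}_\lambda$ via the truncation from Proposition~\ref{prop9} and then apply the strong comparison principle (Proposition~\ref{prop5}) with a shifted potential $\xi(z)+M$, using $H(f)_1(iv)$ to make the bracketed difference nonnegative and $\overline{u}_\lambda\in D_+$ to get the uniform gap $\tilde\gamma>0$ (the paper bounds below by $(\lambda-\vartheta)\min_{\overline\Omega}\overline{u}_\vartheta^{p-1}$ rather than $\min_{\overline\Omega}\overline{u}_\lambda^{p-1}$, an immaterial difference); in part (b) you use the same chain of $L^\infty$ bound, Lieberman $C^{1,\alpha}$ estimates, compact embedding into $C^1(\overline\Omega)$, and identification of the limit with $\overline{u}_\lambda$. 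Your explicit exclusion of the zero limit via a lower solution $\overline{u}_{\lambda_0}$ and your two-sided inequality $\overline{u}_\lambda\le\tilde u\le\overline{u}_\lambda$ merely fill in details the paper leaves implicit (it instead derives a pointwise contradiction with monotonicity), so no substantive divergence.
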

\begin{proof}
	$(a)$ Let $\vartheta<\lambda\in\mathcal{L}$. Let $\bar{u}_{\lambda_n}\in S_{\lambda}\subseteq D_+$ be the minimal solution of \eqref{eqp} (see Proposition \ref{prop12}). From Proposition \ref{prop9} and its proof we know that $\vartheta\in\mathcal{L}$ and we can find $u_{\vartheta}\in S_{\vartheta}\subseteq D_+$ such that $u_{\vartheta}\leq\bar{u}_{\lambda}$ (see (\ref{eq16})). Therefore $\bar{u}_{\vartheta}\leq\bar{u}_{\lambda}$.

Let $\rho=||\bar{u}_{\lambda}||_{\infty}$ and let $\hat{\xi}_{\rho}>0$ be as postulated by hypothesis $H(f)_1(iv)$. We can always take $\hat{\xi}_{\rho}>||\xi||_{\infty}$. Then
	\begin{eqnarray*}
		&&-{\rm div}\,a(D\bar{u}_{\vartheta})+[\xi(z)+\hat{\xi}_{\rho}]\bar{u}_{\vartheta}^{p-1}\\
		&=&\vartheta\bar{u}_{\vartheta}^{p-1}+f(z,\bar{u}_{\vartheta})+\hat{\xi}_{\rho}\bar{u}_{\vartheta}^{p-1}\\
		&=&\lambda\bar{u}_{\vartheta}^{p-1}+f(z,\bar{u}_{\vartheta})+\hat{\xi}_{\rho}\bar{u}_{\vartheta}^{p-1}-(\lambda-\vartheta)\bar{u}_{\vartheta}^{p-1}\\
		&\leq&\lambda\bar{u}_{\vartheta}^{p-1}+f(z,\bar{u}_{\vartheta})+\hat{\xi}_{\rho}\bar{u}_{\vartheta}^{p-1}-(\lambda-\vartheta)m^{p-1}_{\vartheta}\ \mbox{with}\ m_{\vartheta}=\min\limits_{\overline{\Omega}}\bar{u}_{\vartheta}>0\\
		&&(\mbox{recall that}\ \bar{u}_{\vartheta}\in D_+)\\
		&<&\lambda\bar{u}_{\lambda}^{p-1}+f(z,\bar{u}_{\lambda})+\hat{\xi}_{\rho}\bar{u}_{\lambda}^{p-1}\ (\mbox{since}\ \bar{u}_{\vartheta}\leq\bar{u}_{\lambda})\\
		&=&-{\rm div}\,a(D\bar{u}_{\lambda})+[\xi(z)+\hat{\xi}_{\rho}]\bar{u}_{\lambda}^{p-1}\ \mbox{for almost all}\ z\in\Omega.
	\end{eqnarray*}
	
	Let $$h_1(z)=\vartheta\bar{u}_{\lambda}^{p-1}+f(z,\bar{u}_{\vartheta})+\hat{\xi}_{\rho}\bar{u}_{\vartheta}^{p-1}$$
	and $$h_2(z)=\lambda\bar{u}_{\lambda}^{p-1}+f(z,\bar{\lambda})+\hat{\xi}_{\rho}\bar{u}_{\lambda}^{p-1}.$$
	
	Evidently, $h_1,h_2\in L^{\infty}(\Omega)$ (see hypothesis $H(f)_1(i)$) and
	$$0<(\lambda-\vartheta)m^{p-1}_{\vartheta}\leq h_2(z)-h_1(z)\ \mbox{for almost all}\ z\in\Omega.$$
	
	So, we can apply Proposition \ref{prop5} and conclude that $\bar{u}_{\lambda}-\bar{u}_{\vartheta}\in {\rm int}\, C_+$. This proves that the mapping $\lambda\mapsto\bar{u}_{\lambda}$ is strictly increasing.
	
	$(b)$ Let $\lambda_n\rightarrow \lambda^-$ with $\lambda\in\mathcal{L}$. We set $\bar{u}_n=\bar{u}_{\lambda_n}\in S_{\lambda_n}\subseteq D_+$ for all $n\in\NN$. Evidently, $\{\bar{u}_n\}_{n\geq 1}\subseteq W^{1,p}(\Omega)$ is bounded. So, we may assume that
	\begin{equation}\label{eq32}
		\bar{u}_n\stackrel{w}{\rightarrow}u_{\lambda}\ \mbox{in}\ W^{1,p}(\Omega)\ \mbox{and}\ \bar{u}_n\rightarrow u_{\lambda}\ \mbox{in}\ L^p(\Omega)\ \mbox{and in}\ L^p(\partial\Omega).
	\end{equation}
	
	We have
	\begin{eqnarray}\label{eq33}
		&&\left\langle A(\bar{u}_n),h\right\rangle+\int_{\Omega}\xi(z)\bar{u}_n^{p-1}hdz+\int_{\partial\Omega}\beta(z)\bar{u}_n^{p-1}hd\sigma=\int_{\Omega}
[\lambda_n\bar{u}_n^{p-1}+f(z,\bar{u}_n)]hdz\\
		&&\mbox{for all}\ h\in W^{1,p}(\Omega),\  n\in\NN\nonumber.
	\end{eqnarray}
	
	In (\ref{eq33}) we choose $h=\bar{u}_n-u_{\lambda}\in W^{1,p}(\Omega)$, pass to the limit as $n\rightarrow\infty$ and use (\ref{eq32}). Then
	\begin{eqnarray}\label{eq34}
		&&\lim\limits_{n\rightarrow\infty}\left\langle A(\bar{u}_n),\bar{u}_n-u_{\lambda}\right\rangle=0,\nonumber\\
		&\Rightarrow&\bar{u}_n\rightarrow u_{\lambda}\ \mbox{in}\ W^{1,p}(\Omega)\ (\mbox{see Proposition \ref{prop4}}).
	\end{eqnarray}
	
	So, if in (\ref{eq32}) we pass to the limit as $n\rightarrow\infty$ and use (\ref{eq34}), then we can infer that $u_{\lambda}\in S_{\lambda}\subseteq D_+$. On account of (\ref{eq34}) and Proposition 7 of Papageorgiou \& R\u{a}dulescu \cite{20}, we can find $c_{11}>0$ such that
	$$||\bar{u}_n||_{\infty}\leq c_{11}\ \mbox{for all}\ n\in\NN.$$
	
	Then the nonlinear regularity theory of Lieberman \cite{12} implies that there exist $\tau\in(0,1)$ and $c_{12}>0$ such that
	$$\bar{u}_n\in C^{1,\tau}(\overline{\Omega})\ \mbox{and}\ ||\bar{u}_n||_{C^{1,\tau}(\overline{\Omega})}\leq c_{12}\ \mbox{for all}\ n\in\NN.$$
	
	The existence of a compact embedding of $C^{1,\tau}(\overline{\Omega})$ into $C^1(\overline{\Omega})$ and (\ref{eq34}), imply that
	\begin{equation}\label{eq35}
		\bar{u}_n\rightarrow u_{\lambda}\ \mbox{in}\ C^1(\overline{\Omega})\ \mbox{as}\ n\rightarrow\infty.
	\end{equation}
	
	We show that $u_{\lambda}=\bar{u}_{\lambda}$. Arguing by contradiction, suppose that $u_{\lambda}\neq\bar{u}_{\lambda}$. Then we can find $z_0\in\overline{\Omega}$ such that
	\begin{eqnarray*}
		&&\bar{u}_{\lambda}(z_0)<u_{\lambda}(z_0),\\
		&\Rightarrow&\bar{u}_{\lambda}(z_0)<\bar{u}_n(z_0)\ \mbox{for all}\ n\geq n_0\ (\mbox{see (\ref{eq35})}),
	\end{eqnarray*}
	which contradicts (a). Therefore the mapping $\lambda\mapsto\bar{u}_{\lambda}$ is left continuous.
\end{proof}

Now we ready to show the non-admissibility of $\lambda^*$.
\begin{prop}\label{prop15}
	If hypotheses $H(a),H(\xi),H(\beta),H(f)_1$ hold, then $\lambda^*\notin\mathcal{L}$.
\end{prop}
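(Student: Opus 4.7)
Argue by contradiction: suppose $\lambda^{*}\in\mathcal{L}$ and let $v:=\bar{u}_{\lambda^{*}}\in D_{+}$ be the minimal positive solution furnished by Proposition~\ref{prop13}. The strategy is to exhibit some $\lambda>\lambda^{*}$ with $\lambda\in\mathcal{L}$, contradicting $\lambda^{*}=\sup\mathcal{L}$.

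For any $\lambda>\lambda^{*}$, the function $v$ is a \emph{strict} subsolution of \eqref{eqp} at parameter $\lambda$: since $v\in D_{+}$ we have $m:=\min_{\overline{\Omega}}v>0$, and
$$-{\rm div}\,a(Dv)+\xi v^{p-1}=\lambda^{*}v^{p-1}+f(z,v)\leq \lambda v^{p-1}+f(z,v)-(\lambda-\lambda^{*})m^{p-1}.$$
I aim to build, for $\lambda$ close to $\lambda^{*}$, a positive solution $u_{\lambda}\geq v$ of \eqref{eqp} by truncating the reaction \emph{from below} at $v$. With $\rho=\|v\|_{\infty}$ and $\hat{\xi}_{\rho}>0$ from $H(f)_{1}(iv)$ (enlarged so that $\hat{\xi}_{\rho}>\|\xi\|_{\infty}$), define
$$\hat{k}_{\lambda}(z,x)=\begin{cases}0, & x\leq 0,\\ (\lambda+\hat{\xi}_{\rho})v(z)^{p-1}+f(z,v(z)), & 0<x\leq v(z),\\ (\lambda+\hat{\xi}_{\rho})x^{p-1}+f(z,x), & x>v(z),\end{cases}$$
set $\hat{K}_{\lambda}(z,x)=\int_{0}^{x}\hat{k}_{\lambda}(z,s)\,ds$, and form the $C^{1}$-functional
$$\hat{\varphi}_{\lambda}(u)=\frac{1}{p}\mu(u)+\frac{\hat{\xi}_{\rho}}{p}\|u\|_{p}^{p}-\int_{\Omega}\hat{K}_{\lambda}(z,u)\,dz.$$
In the spirit of Propositions~\ref{prop8}, \ref{prop9}, \ref{prop12}, and using Corollary~\ref{cor3}, hypothesis $H(f)_{1}(ii)$ and the size of $\hat{\xi}_{\rho}$, one verifies that $\hat{\varphi}_{\lambda}$ is coercive and sequentially weakly lower semicontinuous; the Weierstrass--Tonelli theorem supplies a minimizer $u_{\lambda}\in W^{1,p}(\Omega)$. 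A test with $t\hat{u}_{1}$ for $t>0$ small, invoking $H(f)_{1}(iii)$ exactly as in Proposition~\ref{prop8}, yields $\hat{\varphi}_{\lambda}(u_{\lambda})<0$, so $u_{\lambda}\neq 0$; testing the Euler--Lagrange equation with $-u_{\lambda}^{-}$ forces $u_{\lambda}\geq 0$.

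The decisive step is the comparison $u_{\lambda}\geq v$, obtained by testing the Euler--Lagrange equation with $h=(v-u_{\lambda})^{+}$. On $\{u_{\lambda}<v\}$ the truncation is active, so $\hat{k}_{\lambda}(z,u_{\lambda})=(\lambda+\hat{\xi}_{\rho})v^{p-1}+f(z,v)$; subtracting the weak form of \eqref{eqp} at $\lambda^{*}$ for $v$ (both sides shifted by $\hat{\xi}_{\rho}|u|^{p-2}u$) produces
$$\begin{aligned}
&\langle A(v)-A(u_{\lambda}),h\rangle+\int_{\Omega}(\xi+\hat{\xi}_{\rho})(v^{p-1}-u_{\lambda}^{p-1})h\,dz+\int_{\partial\Omega}\beta(v^{p-1}-u_{\lambda}^{p-1})h\,d\sigma\\
&\qquad=-(\lambda-\lambda^{*})\int_{\Omega}v^{p-1}h\,dz.
\end{aligned}$$
The left-hand side is $\geq 0$ by the strict monotonicity of $a$ (Proposition~\ref{prop4}), the strict monotonicity of $t\mapsto t^{p-1}$, and $\xi+\hat{\xi}_{\rho},\beta\geq 0$; the right-hand side is $\leq 0$ since $\lambda>\lambda^{*}$ and $v,h\geq 0$. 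Both sides must therefore vanish, and $v>0$ forces $h\equiv 0$, i.e., $u_{\lambda}\geq v$. On $\{u_{\lambda}\geq v\}$ the truncation is inactive, whence $u_{\lambda}$ solves \eqref{eqp} at parameter $\lambda$. Thus $\lambda\in\mathcal{L}$ with $\lambda>\lambda^{*}$, contradicting $\lambda^{*}=\sup\mathcal{L}$.

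The principal expected obstacle lies in the coercivity of $\hat{\varphi}_{\lambda}$ for $\lambda>\lambda^{*}$: truncating from below leaves the reaction superlinear past $v$, so that the competition between $\lambda\int|u|^{p}$ arising from the reaction and the leading term of $\mu(u)$ is delicate. This is what forces the choice of $\lambda$ close to $\lambda^{*}$ and necessitates an essential appeal to the sublinearity hypothesis $H(f)_{1}(ii)$, together with the regularizing role of $\hat{\xi}_{\rho}$ on the negative part.
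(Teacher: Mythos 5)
Your overall strategy --- assume $\lambda^*\in\mathcal{L}$, truncate the reaction from below at the minimal solution $v=\bar{u}_{\lambda^*}$, minimize the truncated functional for some $\lambda>\lambda^*$, and use the comparison $u_\lambda\geq v$ to conclude $\lambda\in\mathcal{L}$ --- is exactly the paper's (see (\ref{eq36})--(\ref{eq37})). However, two steps in your execution do not work as written. First, your truncation $\hat{k}_\lambda$ equals $0$ for $x\leq 0$ and jumps to $(\lambda+\hat{\xi}_\rho)v(z)^{p-1}+f(z,v(z))>0$ for $0<x\leq v(z)$, so it is \emph{not} a Carath\'eodory function: $\hat{K}_\lambda(z,\cdot)$ fails to be differentiable at $0$ and $\hat{\varphi}_\lambda$ is not $C^1$, which undercuts the appeal to the Euler--Lagrange equation. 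The same defect contaminates the comparison step: on the set $\{u_\lambda\leq 0\}\cap\{u_\lambda<v\}$ one has $\hat{k}_\lambda(z,u_\lambda)=0$, so the right-hand side of your displayed identity is not $-(\lambda-\lambda^*)\int_\Omega v^{p-1}h\,dz$ there but carries the term $[(\lambda^*+\hat{\xi}_\rho)v^{p-1}+f(z,v)]h$, which has the wrong sign. The paper's truncation (\ref{eq36}) avoids both problems by freezing the reaction at the value $(\lambda+\eta)\bar{u}_*(z)^{p-1}+f(z,\bar{u}_*(z))$ for \emph{all} $x\leq\bar{u}_*(z)$, not just on $(0,\bar{u}_*(z)]$; with that choice your test with $h=(\bar{u}_*-u_\lambda)^+$ goes through cleanly and the separate step $u_\lambda\geq 0$ becomes unnecessary.

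Second, and more seriously, the coercivity of $\hat{\varphi}_\lambda$: you correctly identify it as the crux, but your proposed resolution (take $\lambda$ close to $\lambda^*$) does not address it. Past $v$ the reaction is not superlinear; by $H(f)_1(ii)$ it is asymptotically $(p-1)$-linear with leading coefficient $\lambda$, so the best available lower bound has the form $\int_\Omega G(Du)\,dz+\frac{1}{p}\int_\Omega(\xi(z)-\lambda-\epsilon)|u|^p\,dz+\frac{1}{p}\int_{\partial\Omega}\beta(z)|u|^p\,d\sigma-C$, and whether this is coercive is governed by the position of $\lambda$ itself relative to a principal-eigenvalue-type threshold, not by the size of $\lambda-\lambda^*$. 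In the model case of the $p$-Laplacian with $f>0$ one has $\lambda^*=\hat{\lambda}_1(p,\xi,\beta)$ and $\hat{\varphi}_\lambda(t\hat{u}_1)\rightarrow-\infty$ as $t\rightarrow+\infty$ for every $\lambda>\lambda^*$, so the truncated functional is genuinely unbounded below and no global minimizer exists. (The paper asserts coercivity at the corresponding point by referring back to Proposition \ref{prop8}, where it was obtained only for $\lambda$ sufficiently negative, so this step is delicate there as well; a more robust route is the one the paper itself uses for the superlinear analogue, Proposition \ref{prop25}, namely producing the critical point of the truncated functional via the mountain pass theorem rather than by global minimization.) As written, your argument does not actually produce the solution $u_\lambda$ on which the contradiction rests.
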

\begin{proof}
	Arguing by contradiction, suppose that $\lambda^*\in\mathcal{L}$. According to Proposition \ref{prop13}, problem \eqref{eqp} admits a smallest positive solution $\bar{u}_*=\bar{u}_{\lambda^*}\in D_+$. Let $\lambda>\lambda^*,\eta>||\xi||_{\infty}$ and consider the Carath\'eodory function
	\begin{eqnarray}\label{eq36}
		\gamma_{\lambda}(z,x)=\left\{\begin{array}{lll}
			&(\lambda+\eta)\bar{u}_*(z)^{p-1}+f(z,\bar{u}_*(z))& \mbox{if}\ x\leq\bar{u}_*(z)\\
			&(\lambda+\eta)x^{p-1}+f(z,x)&\mbox{if} \ \bar{u}_*(z)<x.
		\end{array}\right.
	\end{eqnarray}
	
	We set $\Gamma_{\lambda}(z,x)=\int^x_0\gamma_{\lambda}(z,s)ds$ and consider the $C^1$-functional $\tilde{\varphi}_{\lambda}:W^{1,p}(\Omega)\rightarrow\RR$ defined by
	$$\tilde{\varphi}_{\lambda}(u)=\frac{1}{p}\mu(u)+\frac{\eta}{p}||u||^p_p-\int_{\Omega}\Gamma_{\lambda}(z,u)dz\ \mbox{for all}\ u\in W^{1,p}(\Omega).$$
	
	As in the proof of Proposition \ref{prop8}, using hypothesis $H(f)_1(ii)$, we show that $\tilde{\varphi}_{\lambda}(\cdot)$ is coercive. Moreover, $\tilde{\varphi}_{\lambda}(\cdot)$ is sequentially weakly lower semicontinuous. So, we can find $u_{\lambda}\in W^{1,p}(\Omega)$ such that
	\begin{eqnarray}\label{eq37}
		&&\tilde{\varphi}_{\lambda}(u_{\lambda})=\inf\{\tilde{\varphi}_{\lambda}(u):u\in W^{1,p}(\Omega)\},\nonumber\\
		&\Rightarrow&\tilde{\varphi}'_{\lambda}(u_{\lambda})=0,\nonumber\\
		&\Rightarrow&\left\langle A(u_{\lambda}),h\right\rangle+\int_{\Omega}[\xi(z)+\eta]|u_{\lambda}|^{p-2}u_{\lambda}hdz+\int_{\partial\Omega}\beta(z)|u_{\lambda}|^{p-2}u_{\lambda}hd\sigma=\int_{\Omega}\gamma_{\lambda}(z,u_{\lambda})hdz\\
		&&\mbox{for all}\ h\in W^{1,p}(\Omega).\nonumber
	\end{eqnarray}
	
	In (\ref{eq37}) we choose $h=(\bar{u}_*-u_{\lambda})^+\in W^{1,p}(\Omega)$. Then
	\begin{eqnarray*}
		&&\left\langle A(u_{\lambda}),(\bar{u}_*-u_{\lambda})^+\right\rangle+\int_{\Omega}[\xi(z)+\eta]|u_{\lambda}|^{p-2}u_{\lambda}(\bar{u}_*-u_{\lambda})^+dz\\
		&&\hspace{1cm}+\int_{\partial\Omega}\beta(z)|u_{\lambda}|^{p-2}u_{\lambda}(\bar{u}_*-u_{\lambda})^+d\sigma\\
		&=&\int_{\Omega}[(\lambda+\eta)\bar{u}_*^{p-1}+f(z,\bar{u}_*)](\bar{u}_*-u_{\lambda})^+dz\ (\mbox{see (\ref{eq36})})\\
		&\geq&\int_{\Omega}[(\lambda^*+\eta)\bar{u}_*^{p-1}+f(z,\bar{u}_*)](\bar{u}_*-u_{\lambda})^+dz\ (\mbox{since}\ \lambda>\lambda^*)\\
		&=&\left\langle A(\bar{u}_*),(\bar{u}_*-u_{\lambda})^+\right\rangle+\int_{\Omega}[\xi(z)+\eta]\bar{u}_*^{p-1}(\bar{u}_*-u_{\lambda})^+dz+\int_{\partial\Omega}\beta(z)\bar{u}_*^{p-1}(\bar{u}_*-u_{\lambda})^+d\sigma\\
		&&(\mbox{since}\ \bar{u}_*\in S_{\lambda^*}),\\
		&\Rightarrow&\bar{u}_*\leq u_{\lambda}.
	\end{eqnarray*}
	
	Then from (\ref{eq36}) and (\ref{eq37}) it follows that $\bar{u}_{\lambda}\in S_{\lambda}$ and so $\lambda\in\mathcal{L}$, a contradiction. This proves that $\lambda^*\notin\mathcal{L}$.
\end{proof}

So, summarizing the situation for problem \eqref{eqp} when the perturbation $f(z,\cdot)$ is $(p-1)$-sublinear, we can state the following theorem.
\begin{theorem}\label{th16}
	If hypotheses $H(a), H(\xi),H(\beta),H(f)_1$ hold, then there exists $\lambda^*<+\infty$ such that
	\begin{itemize}
		\item[(a)] for every $\lambda\geq\lambda^*$, problem \eqref{eqp} has no positive solutions;
		\item[(b)] for every $\lambda<\lambda^*$, problem \eqref{eqp} has at least one positive solution $u_{\lambda}\in D_+$;
		\item[(c)] for every $\lambda<\lambda^*$, problem \eqref{eqp} has a smallest positive solution $\bar{u}_{\lambda}\in D_+$ and the map $\lambda\mapsto\bar{u}_{\lambda}$ from $(-\infty,\lambda^*)$ into $C^1(\overline{\Omega})$ is
		
			 $\bullet$ strictly increasing, that is, if $\vartheta<\lambda<\lambda^*$, then
			$$\bar{u}_{\lambda}-\bar{u}_{\vartheta}\in {\rm int}\, C_+;$$
			
			 $\bullet$ left continuous, that is, if $\lambda_n\rightarrow\lambda^-$ and $\lambda<\lambda^*$, then $\bar{u}_{\lambda_n}\rightarrow\bar{u}_{\lambda}$ in $C^1(\overline{\Omega})$.
	\end{itemize}
\end{theorem}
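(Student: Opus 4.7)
The plan is to derive Theorem~\ref{th16} as a direct assembly of Propositions~\ref{prop8}--\ref{prop15}, since the statement is essentially a consolidation of the detailed results established for the $(p-1)$-sublinear case. My first move will be to pin down the structure of the admissibility set $\mathcal{L}$: Proposition~\ref{prop8} gives $\mathcal{L}\neq\emptyset$ and $S_\lambda\subseteq D_+$ for every $\lambda\in\mathcal{L}$; Proposition~\ref{prop9} provides downward propagation (if $\vartheta<\lambda\in\mathcal{L}$ then $\vartheta\in\mathcal{L}$); Proposition~\ref{prop10} ensures $\lambda^*:=\sup\mathcal{L}<+\infty$; and Proposition~\ref{prop15} excludes the endpoint, $\lambda^*\notin\mathcal{L}$. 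Stringing these four facts together, I would conclude $\mathcal{L}=(-\infty,\lambda^*)$ exactly.

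With this structural identification in hand, part~(a) is immediate: for $\lambda\geq\lambda^*$ we have $\lambda\notin\mathcal{L}$, so by the very definition of $\mathcal{L}$ problem~\eqref{eqp} admits no positive solution. For part~(b), I would simply observe that any $\lambda<\lambda^*$ lies in $\mathcal{L}$ by the interval structure just obtained, hence $S_\lambda\neq\emptyset$; the refinement $u_\lambda\in D_+$ is the second conclusion of Proposition~\ref{prop8}, which applies to every $\lambda\in\mathcal{L}$.

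Part~(c) is then a transcription of Propositions~\ref{prop13} and~\ref{prop14}: the existence of the smallest element $\overline{u}_\lambda\in D_+$ of $S_\lambda$ for $\lambda<\lambda^*$ is precisely the content of Proposition~\ref{prop13}, while the two bullet points, namely strict monotonicity in the sense $\overline{u}_\lambda-\overline{u}_\vartheta\in\operatorname{int}C_+$ when $\vartheta<\lambda<\lambda^*$, and left continuity of $\lambda\mapsto\overline{u}_\lambda$ into $C^1(\overline{\Omega})$, are exactly the two assertions of Proposition~\ref{prop14}. No further argument is required.

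The hard part has already been carried out in Proposition~\ref{prop15}, where the non-admissibility of $\lambda^*$ was established via a truncation of the reaction above $\overline{u}_{\lambda^*}$ and a comparison argument producing a positive solution at a level $\lambda>\lambda^*$. Without that step one could only guarantee $\mathcal{L}\supseteq(-\infty,\lambda^*)$ and would be unable to rule out $\mathcal{L}=(-\infty,\lambda^*]$, which would force a rewording of (a). Since that obstacle is already cleared, the remainder of the proof of Theorem~\ref{th16} is structural bookkeeping that I would record in a few lines by citing the relevant propositions in the order above.
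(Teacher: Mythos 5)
Your proposal is correct and coincides with the paper's treatment: Theorem~\ref{th16} is presented there as a summary of Propositions~\ref{prop8}--\ref{prop15} with no additional argument, and your assembly (nonemptiness and regularity from Proposition~\ref{prop8}, downward propagation from Proposition~\ref{prop9}, finiteness of $\lambda^*$ from Proposition~\ref{prop10}, exclusion of the endpoint from Proposition~\ref{prop15}, hence $\mathcal{L}=(-\infty,\lambda^*)$, with part~(c) read off from Propositions~\ref{prop13} and~\ref{prop14}) is exactly the intended reasoning. Your remark that Proposition~\ref{prop15} is the step that upgrades $\mathcal{L}\supseteq(-\infty,\lambda^*)$ to the exact interval, and hence makes part~(a) hold at $\lambda=\lambda^*$ itself, is accurate.
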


In the special case of the $p$-Laplacian (that is, $a(y)=|y|^{p-2}y$ for all $y\in\RR^N$ with $1<p<\infty$), we can identify $\lambda^*$ as $\hat{\lambda}_1(p,\xi,\beta)$, when $f(z,x)>0$ for almost all $z\in\Omega$, and for all $x>0$.

So, we consider the following nonlinear Robin problem:
\begin{equation}\tag{$PL_{\lambda}$}\label{eql}
	\left\{\begin{array}{l}
		-\Delta_pu(z)+\xi(z)u(z)^{p-1}=\lambda u(z)^{p-1}+f(z,u(z))\ \mbox{in}\ \Omega,\\
		\frac{\partial u}{\partial n_p}+\beta(z)u^{p-1}=0\ \mbox{on}\ \partial\Omega,u>0,\lambda\in\RR,1<p<\infty.
	\end{array}\right\}
\end{equation}
\begin{prop}\label{prop17}
	Assume that hypotheses $H(\xi),H(\beta)$ hold and let $f:\Omega\times\RR\rightarrow\RR$ be a Carath\'eodory function such that
	\begin{itemize}
		\item for almost all $z\in\Omega$, $f(z,0)=0$ and $f(z,x)>0$ for all $x>0$;
		\item $f(z,x)\leq a(z)(1+x^{p^*-1})$ for almost all $z\in\Omega$, and for all $x\geq 0$, with $a\in L^{\infty}(\Omega)$.
	\end{itemize}
	Then for all $\lambda\geq\hat{\lambda}_1(p,\xi,\beta)$, $S_{\lambda}=\emptyset$.
\end{prop}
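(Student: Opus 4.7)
My plan is to argue by contradiction using Picone's identity for the $p$-Laplacian against the principal eigenfunction. Suppose there exists $u \in S_\lambda$ for some $\lambda \geq \hat{\lambda}_1 := \hat{\lambda}_1(p,\xi,\beta)$. First I would show that $u \in D_+$. From the growth condition on $f$ and Moser iteration (Papageorgiou--R\u{a}dulescu \cite{20}), $u \in L^\infty(\Omega)$, hence $u \in C^{1,\alpha}(\overline\Omega)$ by the Lieberman regularity theory. Setting $c := \|\xi\|_\infty + |\lambda|$, one rewrites the equation as $-\Delta_p u + c u^{p-1} = (c+\lambda-\xi(z))u^{p-1} + f(z,u) \geq 0$, so the nonlinear maximum principle of Pucci--Serrin \cite{24} combined with the Robin boundary condition and the Hopf lemma forces $u \in D_+$.

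Let $\hat{u}_1 = \hat{u}_1(p,\xi,\beta) \in D_+$ be the positive $L^p$-normalized principal eigenfunction from Proposition \ref{prop7}. Since $u, \hat{u}_1 \in D_+$, the function $\phi := \hat{u}_1^p/u^{p-1}$ belongs to $C^1(\overline\Omega) \subseteq W^{1,p}(\Omega)$ and is a legitimate test function. Testing the equation $(PL_\lambda)$ against $\phi$ and using Green's identity together with the Robin condition $|Du|^{p-2}\partial u/\partial n = -\beta u^{p-1}$ yields
$$
\int_\Omega |Du|^{p-2}(Du, D\phi)_{\RR^N}\,dz + \int_{\partial\Omega}\beta \hat{u}_1^p\, d\sigma = \lambda\int_\Omega \hat{u}_1^p\, dz + \int_\Omega f(z,u)\frac{\hat{u}_1^p}{u^{p-1}}\, dz - \int_\Omega \xi(z)\hat{u}_1^p\, dz.
$$

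The crux is Picone's identity: for $u,v > 0$ smooth one has, pointwise,
$$
|Dv|^p - |Du|^{p-2}\bigl(Du, D(v^p/u^{p-1})\bigr)_{\RR^N} \geq 0,
$$
which follows from the elementary inequality $|\xi|^p + (p-1)t^p|\eta|^p \geq p t^{p-1} |\eta|^{p-2}(\eta,\xi)_{\RR^N}$ (AM--GM applied after Cauchy--Schwarz). Applying this with $v = \hat{u}_1$ and integrating gives
$$
\int_\Omega |Du|^{p-2}(Du, D\phi)_{\RR^N}\, dz \leq \int_\Omega |D\hat{u}_1|^p\, dz.
$$
Substituting back into the identity above and rearranging produces
$$
\lambda \int_\Omega \hat{u}_1^p\, dz + \int_\Omega f(z,u)\frac{\hat{u}_1^p}{u^{p-1}}\, dz \leq \|D\hat{u}_1\|_p^p + \int_\Omega \xi \hat{u}_1^p\, dz + \int_{\partial\Omega}\beta \hat{u}_1^p\, d\sigma = \mu_p(\hat{u}_1) = \hat{\lambda}_1,
$$
where we used $\|\hat{u}_1\|_p = 1$ and the variational characterization of Proposition \ref{prop7}(c), which is attained at $\hat{u}_1$. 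Since $\int_\Omega \hat{u}_1^p\, dz = 1$, this rearranges to
$$
(\lambda - \hat{\lambda}_1) + \int_\Omega f(z,u)\frac{\hat{u}_1^p}{u^{p-1}}\, dz \leq 0.
$$
But $\lambda - \hat{\lambda}_1 \geq 0$ by assumption, $\hat{u}_1 > 0$ and $u > 0$ throughout $\overline\Omega$, and $f(z,u(z)) > 0$ for almost all $z \in \Omega$. Hence the left-hand side is strictly positive, a contradiction. Therefore $S_\lambda = \emptyset$ for all $\lambda \geq \hat{\lambda}_1$.

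I expect the main technical obstacle to lie in verifying the integrability and the rigorous integration-by-parts of the Picone inequality up to the boundary; this is where membership $u \in D_+$ (not merely $u > 0$ in $\Omega$) is essential, since it guarantees $\phi = \hat{u}_1^p/u^{p-1}$ and its gradient are uniformly bounded on $\overline\Omega$, legitimizing the computation with the Robin trace.
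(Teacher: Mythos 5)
Your proposal is correct and follows essentially the same route as the paper: both argue by contradiction via the nonlinear Picone identity of Allegretto--Huang applied to $\hat{u}_1$ and $u\in D_+$, combined with the nonlinear Green identity, the Robin boundary condition, the variational characterization $\mu_p(\hat u_1)=\hat\lambda_1$, and the strict positivity of $f(z,\cdot)$ on $(0,\infty)$ to force $0\le\hat\lambda_1-\lambda-\int_\Omega f(z,u)\hat u_1^p u^{1-p}\,dz<0$. The only cosmetic difference is that you test the equation directly with $\phi=\hat u_1^p/u^{p-1}$ and sketch a proof of Picone's inequality, whereas the paper integrates the pointwise inequality $R(\hat u_1,u)\ge0$ and cites the identity.
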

\begin{proof}
	Arguing by contradiction, suppose that $S_{\lambda}\neq\emptyset$ and let $u\in S_{\lambda}$. The nonlinear regularity theory implies that $u\in D_+$. Let $\hat{u}_1=\hat{u}_1(p,\xi,\beta)\in D_+$ (see Proposition \ref{prop7}). We consider the function
	$$R(\hat{u}_1,u)(z)=|D\hat{u}_1(z)|^p-|Du(z)|^{p-2}(Du(z),D\left(\frac{\hat{u}_1^p}{u^{p-1}}\right)(z))_{\RR^N}.$$
	
	The nonlinear Picone identity of Allegretto \& Huang \cite{2} implies that
	\begin{eqnarray*}
		&&0\leq R(\hat{u}_1,u)(z)\ \mbox{for almost all}\ z\in\Omega,\\
		&\Rightarrow&0\leq\int_{\Omega}R(\hat{u}_1,u)dz\\
		&&=||D\hat{u}_1||^p_p-\int_{\Omega}|Du|^{p-2}\left(Du,D\left(\frac{\hat{u}_1^p}{u^{p-1}}\right)\right)_{\RR^N}dz\\
		&&=||D\hat{u}_1||^p_p-\int_{\Omega}(-\Delta_pu)\frac{\hat{u}_1^p}{u^{p-1}}dz+\int_{\partial\Omega}\beta(z)u^{p-1}\frac{\hat{u}_1^p}{u^{p-1}}d\sigma
	\end{eqnarray*}
	(using the nonlinear Green identity, see Gasinski \& Papageorgiou \cite[p. 211]{8})
	\begin{eqnarray*}
		&&=||D\hat{u}_1||^p_p+\int_{\Omega}\xi(z)\hat{u}^{p-1}_1dz+\int_{\partial\Omega}\beta(z)\hat{u}_1^pd\sigma-\lambda-\int_{\Omega}f(z,u)\frac{\hat{u}_1^p}{u^{p-1}}\\
		&&(\mbox{see \eqref{eql} and recall that}\ ||\hat{u}_1||_p=1)\\
		&&<\mu(\hat{u}_1)-\lambda\ (\mbox{recall that}\ f(z,x)>0\ \mbox{for almost all}\ z\in\Omega,\ \mbox{and for all}\ x>0)\\
		&&=\hat{\lambda}_1-\lambda<0,
	\end{eqnarray*}
	a contradiction. Therefore $S_{\lambda}=\emptyset$ and so $\lambda\notin\mathcal{L}$ for all $\lambda\geq\hat{\lambda}_1(p,\xi,\beta)$.
\end{proof}

Moreover, reasoning as in the proof of Proposition \ref{prop8}, via the direct method of the calculus of variations, we obtain the following result. Note that now in hypothesis $H(a)(iv)$ we take  $q=p$.
\begin{prop}\label{prop18}
	If hypotheses $H(\xi),H(\beta),H(f)_1$ hold and $\lambda<\hat{\lambda}_1=\hat{\lambda}_1(p,\xi,\beta)$, then $\lambda\in\mathcal{L}$.
\end{prop}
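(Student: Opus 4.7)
\smallskip
\noindent\textbf{Proof proposal.} The plan is to mimic the proof of Proposition \ref{prop8} via direct minimization, with only one new ingredient: a sharp coercivity estimate that exploits the variational characterization of $\hat\lambda_1$. Fix $\eta>\|\xi\|_\infty$, keep the Carath\'eodory truncation $e_\lambda(z,x)$ of \eqref{eq5}, and consider the associated $C^1$-functional $\varphi_\lambda$ on $W^{1,p}(\Omega)$. For the $p$-Laplacian one has $pG(y)=|y|^p$ and $\mu(u)=\mu_p(u)$, so splitting $u=u^+-u^-$ yields
$$
\varphi_\lambda(u)=\frac{1}{p}\bigl[\mu_p(u^+)-\lambda\|u^+\|_p^p\bigr]+\frac{1}{p}\bigl[\mu_p(u^-)+\eta\|u^-\|_p^p\bigr]-\int_\Omega F(z,u^+)\,dz.
$$

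The main (and essentially only) technical point is to upgrade the rough estimate used in Proposition \ref{prop8} to a full $W^{1,p}$-coercivity valid on the whole range $\lambda<\hat\lambda_1$. I would establish
$$
\mu_p(v)-\lambda\|v\|_p^p\;\geq\;c_0\|v\|^p\qquad\text{for all }v\in W^{1,p}(\Omega),
$$
with some $c_0=c_0(\lambda)>0$, by contradiction: a normalized sequence $\{v_n\}$ with $\|v_n\|=1$ violating this would satisfy $\mu_p(v_n)-\lambda\|v_n\|_p^p\to 0$, and since this quantity dominates $(\hat\lambda_1-\lambda)\|v_n\|_p^p$ via \eqref{eq4}, one gets $\|v_n\|_p\to 0$; the compactness of the trace map and of the embedding $W^{1,p}(\Omega)\hookrightarrow L^p(\Omega)$ then force the $\xi$- and $\beta$-integrals to vanish, whence $\|Dv_n\|_p\to 0$, contradicting $\|v_n\|=1$. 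Applying this inequality to $v=u^+$, noting that $\mu_p(u^-)+\eta\|u^-\|_p^p\geq c_1\|u^-\|^p$ (since $\eta>\|\xi\|_\infty$ and $\beta\geq 0$), and dominating $F(z,u^+)\leq\frac{\epsilon}{p}(u^+)^p+c_\epsilon$ from $H(f)_1(ii)$ with $\epsilon<c_0$, one obtains $\varphi_\lambda(u)\geq c\|u\|^p-c'$, hence coercivity.

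Sequential weak lower semicontinuity (from Sobolev and trace compactness) and Weierstrass--Tonelli then produce a minimizer $u_\lambda\in W^{1,p}(\Omega)$. To see $u_\lambda\neq 0$, evaluate $\varphi_\lambda$ at $t\hat u_1$, where $\hat u_1=\hat u_1(p,\xi,\beta)\in D_+$ and $\|\hat u_1\|_p=1$: since $q=p$ in this setting, hypothesis $H(f)_1(iii)$ yields $F(z,x)\geq\frac{M}{p}x^p$ on a neighborhood of $0$ with $M$ prescribed arbitrarily large, so for sufficiently small $t>0$
$$
\varphi_\lambda(t\hat u_1)\;\leq\;\frac{t^p}{p}\bigl(\hat\lambda_1-\lambda-M\bigr)
$$
(using $\mu_p(\hat u_1)=\hat\lambda_1$), which is strictly negative once $M>\hat\lambda_1-\lambda$. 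Testing $\varphi_\lambda'(u_\lambda)=0$ with $-u_\lambda^-\in W^{1,p}(\Omega)$ gives $u_\lambda\geq 0$, and Lieberman's regularity \cite{12} followed by the nonlinear maximum principle of Pucci--Serrin \cite{24} (applied with the constant $\hat\xi_\rho$ from $H(f)_1(iv)$, $\rho=\|u_\lambda\|_\infty$) place $u_\lambda$ in $D_+$. Therefore $u_\lambda\in S_\lambda$, so $\lambda\in\mathcal{L}$.
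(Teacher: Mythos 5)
Your proposal is correct and follows exactly the route the paper intends: the paper gives no written proof, merely stating that one reasons ``as in the proof of Proposition~\ref{prop8}, via the direct method of the calculus of variations,'' with $q=p$ in $H(a)(iv)$. The one genuinely new ingredient you supply --- the coercivity estimate $\mu_p(v)-\lambda\|v\|_p^p\geq c_0\|v\|^p$ for all $\lambda<\hat{\lambda}_1$, obtained from the variational characterization \eqref{eq4} by the standard normalization-and-compactness contradiction --- is precisely what is needed to extend the coercivity of $\varphi_\lambda$ from the very negative $\lambda$ of Proposition~\ref{prop8} to the full range $\lambda<\hat{\lambda}_1$, and your verification of $\varphi_\lambda(t\hat{u}_1)<0$ using $\mu_p(\hat{u}_1)=\hat{\lambda}_1$ and $H(f)_1(iii)$ with $q=p$ is sound.
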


We introduce the following stronger version of hypotheses $H(f)_1$.

\smallskip
$H(f)'_1:$ $f:\Omega\times\RR\rightarrow\RR$ is a Carath\'eodory function such that for almost all $z\in\Omega$, $f(z,0)=0$, $f(z,x)>0$ for all $x>0$ and hypotheses $H(f)'_1(i),\, (ii),\, (iii),\, (iv)$ are the same as the corresponding hypotheses $H(f)_1(i),\, (ii),\, (iii),\,(iv)$.

\smallskip
Using this stronger version of $H(f)_1$ and combining Propositions \ref{prop17} and \ref{prop18} we have the following theorem concerning the positive solutions of \eqref{eql} as the parameter $\lambda\in\RR$ varies.
\begin{theorem}\label{th19}
	If hypotheses $H(\xi),H(\beta),H(f)'_1$ hold, then
	\begin{itemize}
		\item[(a)] for every $\lambda\geq\hat{\lambda}_1=\hat{\lambda}_1(p,\xi,\beta)$, problem \eqref{eql} has no positive solutions;
		\item[(b)] for every $\lambda<\hat{\lambda}$, problem \eqref{eql} has at least one positive solution $u_{\lambda}\in D_+$;
		\item[(c)] for every $\lambda<\hat{\lambda}$, problem \eqref{eql} has a smallest positive solution $\bar{u}_{\lambda}\in D_+$ and the map $\lambda\mapsto\bar{u}_{\lambda}$ from $(-\infty,\hat{\lambda}_1)$ into $C^1(\overline{\Omega})$ is
		\begin{itemize}
			\item[$\bullet$] strictly increasing (that is, $\vartheta<\lambda<\hat{\lambda}_1\Rightarrow\bar{u}_{\lambda}-\bar{u}_{\vartheta}\in {\rm int}\, C_+$);
			\item[$\bullet$] left continuous.
		\end{itemize}
	\end{itemize}
\end{theorem}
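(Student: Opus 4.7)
The plan is to observe that Theorem \ref{th19} is essentially a packaging of earlier results specialized to the $p$-Laplacian case $a(y)=|y|^{p-2}y$, once one identifies the critical threshold $\lambda^*$ with $\hat\lambda_1=\hat\lambda_1(p,\xi,\beta)$. First I would verify that for $a(y)=|y|^{p-2}y$ hypotheses $H(a)$ hold with $q=p$ and $\tilde c=1/(p-1)$ (this is routine; it is also implicitly acknowledged in the remark preceding Proposition \ref{prop18}), so that Propositions \ref{prop8}--\ref{prop16} all apply to problem \eqref{eql}. Since $H(f)'_1$ is strictly stronger than $H(f)_1$, the entire machinery developed in Section 3 is available.

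For part (a), I would simply invoke Proposition \ref{prop17}: the positivity condition $f(z,x)>0$ for $x>0$ built into $H(f)'_1$ is precisely what is needed in that Picone-identity argument, which yields $S_\lambda=\emptyset$ whenever $\lambda\geq\hat\lambda_1$. Hence $\mathcal{L}\subseteq(-\infty,\hat\lambda_1)$, so $\lambda^*\leq\hat\lambda_1$. For part (b), Proposition \ref{prop18} directly asserts $\lambda\in\mathcal{L}$ for every $\lambda<\hat\lambda_1$, giving $\lambda^*\geq\hat\lambda_1$. Combining these two inclusions yields $\lambda^*=\hat\lambda_1$, and the regularity statement $u_\lambda\in D_+$ follows from Proposition \ref{prop8} (whose proof goes through unchanged under $H(f)'_1$ since only $H(f)_1$ is used).

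For part (c), with the identification $\lambda^*=\hat\lambda_1$ in hand, everything reduces to transferring Propositions \ref{prop13} and \ref{prop14} to the current setting. Proposition \ref{prop13} supplies the smallest element $\bar u_\lambda\in D_+$ of $S_\lambda$ for each $\lambda<\hat\lambda_1$ (its proof relied only on $H(a),H(\xi),H(\beta),H(f)_1$ and the downward-directedness of $S_\lambda$, none of which are affected by the strengthening to $H(f)'_1$). The strict monotonicity and left continuity of $\lambda\mapsto\bar u_\lambda$ from $(-\infty,\hat\lambda_1)$ into $C^1(\overline\Omega)$ follow verbatim from parts (a) and (b) of Proposition \ref{prop14}, where the strong comparison principle Proposition \ref{prop5} is again applicable because $a(y)=|y|^{p-2}y$ satisfies $H(a)$.

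No genuine obstacle should arise: the only point that requires a moment of care is the equality $\lambda^*=\hat\lambda_1$, which is a two-sided inequality obtained by pairing Propositions \ref{prop17} and \ref{prop18}. Once this is noted, Theorem \ref{th19} is an immediate corollary of Theorem \ref{th16} with the abstract $\lambda^*$ replaced by the explicit spectral value $\hat\lambda_1(p,\xi,\beta)$.
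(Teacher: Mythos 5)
Your proposal matches the paper's own route exactly: the paper derives Theorem \ref{th19} precisely by combining Proposition \ref{prop17} (nonexistence for $\lambda\geq\hat{\lambda}_1$ via the Picone identity, using $f(z,x)>0$ for $x>0$) with Proposition \ref{prop18} (existence for $\lambda<\hat{\lambda}_1$, taking $q=p$ in $H(a)(iv)$), which forces $\lambda^*=\hat{\lambda}_1$, and then imports the minimal-solution and monotonicity/continuity statements from Propositions \ref{prop13} and \ref{prop14} as packaged in Theorem \ref{th16}. The only slip is cosmetic: for $a(y)=|y|^{p-2}y$ one has $G_0(t)=t^p/p$ and hence $\tilde{c}=1$ rather than $1/(p-1)$, but this is immaterial since the argument only requires $\tilde{c}>0$.
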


If we  further restrict the conditions on the perturbation $f(z,x)$, we can have uniqueness for the positive solution.

The new hypotheses on $f(z,x)$ are the following:

\smallskip
$H(f)_1{''}:$ $f:\Omega\times\RR\rightarrow\RR$ is a Carath\'eodory function such that for almost all $z\in\Omega$, $f(z,0)=0, f(z,x)>0$ for all $x>0$, hypotheses $H(f)_1{''}(i),\,(ii),\,(iii),\,(iv)$ are the same as the corresponding hypotheses $H(f)_1(i),\,(ii),\,(iii),\,(iv)$ and
\begin{itemize}
	\item[(v)] if $x-y\geq m>0$, then $\frac{f(z,y)}{y^{p-1}}-\frac{f(z,x)}{x^{p-1}}\geq c_m>0$ for almost all $z\in\Omega$.
\end{itemize}
\begin{prop}\label{prop20}
	If hypotheses $H(\xi),H(\beta),H(f)_1''$ hold and $\lambda<\hat{\lambda}_1$, then problem \eqref{eqp} admits a unique solution $\bar{u}_{\lambda}\in D_+$.
\end{prop}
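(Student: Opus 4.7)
My plan is to invoke Theorem \ref{th16} (or Theorem \ref{th19} in the $p$-Laplacian setting) for existence of a positive solution $\bar u_\lambda \in D_+$, since $H(f)_1''$ strengthens $H(f)_1$; the substance lies in proving uniqueness. Suppose $u_1, u_2 \in S_\lambda$. By Proposition \ref{prop8} both belong to $D_+$. The key observation is that $H(f)_1''(v)$ forces $x \mapsto f(z,x)/x^{p-1}$ to be strictly decreasing on $(0,\infty)$ for a.a.\ $z \in \Omega$, with a uniform rate whenever the gap between the arguments is bounded below. This is the standard structural assumption for Brezis--Oswald / D\'iaz--Saa uniqueness, and the natural device is the convex functional $l$ introduced in the proof of Proposition \ref{prop11}.

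Concretely, I would fix $\eta > \|\xi\|_\infty$ and, taking $q = p$ in $H(a)(iv)$ (per the convention adopted before Proposition \ref{prop18}), introduce
$$\tilde l(u) = \int_\Omega G(Du^{1/p})\,dz + \frac{1}{p}\int_\Omega [\xi(z)+\eta]\,u\,dz + \frac{1}{p}\int_{\partial\Omega}\beta(z)\,u\,d\sigma$$
on $\{u \in L^1(\Omega): u \geq 0,\ u^{1/p} \in W^{1,p}(\Omega)\}$, and $+\infty$ otherwise. The argument of Proposition \ref{prop11} carries over verbatim (with $\xi + \eta \geq 0$ replacing $|\xi|$) to show that $\tilde l$ is convex and lower semicontinuous. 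Since each $u_i \in D_+$ is bounded away from $0$ and lies in $C^{1,\alpha}(\overline\Omega)$, both $u_i^p$ belong to ${\rm dom}\,\tilde l$, and for $h \in C^1(\overline\Omega)$ the chain rule together with the nonlinear Green identity yields
$$\tilde l'(u_i^p)(h) = \frac{1}{p}\int_\Omega \frac{-{\rm div}\,a(Du_i) + [\xi(z)+\eta]u_i^{p-1}}{u_i^{p-1}}\,h\,dz = \frac{1}{p}\int_\Omega\left[\lambda + \eta + \frac{f(z,u_i)}{u_i^{p-1}}\right]h\,dz,$$
where the second equality uses that $u_i \in S_\lambda$. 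Convexity of $\tilde l$ gives monotonicity of $\tilde l'$, so testing on $h = u_1^p - u_2^p \in C^1(\overline\Omega)$ the $(\lambda+\eta)$ terms cancel and I obtain
$$0 \leq \tilde l'(u_1^p)(u_1^p - u_2^p) - \tilde l'(u_2^p)(u_1^p - u_2^p) = \frac{1}{p}\int_\Omega \left[\frac{f(z,u_1)}{u_1^{p-1}} - \frac{f(z,u_2)}{u_2^{p-1}}\right](u_1^p - u_2^p)\,dz.$$

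By the strict monotonicity of $x \mapsto f(z,x)/x^{p-1}$ coming from $H(f)_1''(v)$, the integrand is pointwise $\leq 0$ on $\Omega$ and $< 0$ wherever $u_1 \neq u_2$. If $u_1 \not\equiv u_2$, then by continuity (both are in $D_+ \subset C(\overline\Omega)$) there exists a nonempty open set $U \subseteq \Omega$ on which $|u_1 - u_2| \geq m > 0$; condition (v) supplies a uniform lower bound $c_m > 0$ on the modulus of the bracketed quotient on $U$, making the integrand strictly negative there, so the full integral is $< 0$, contradicting the displayed inequality. Hence $u_1 \equiv u_2$, which is the desired uniqueness. The hard part will be justifying the Gateaux-derivative computation for $\tilde l$, in particular differentiating through $u \mapsto u^{1/p}$, pairing with $-{\rm div}\,a(D\cdot)$, and carrying the Robin boundary term through the Green identity; however, since $u_i \in C^{1,\alpha}(\overline\Omega)$ with $\min_{\overline\Omega} u_i > 0$, every quotient is well-defined and smooth enough to reduce this to the computation already carried out in Proposition \ref{prop11}.
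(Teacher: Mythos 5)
Your proof is correct, but it takes a genuinely different route from the paper's. The paper argues by contradiction with the \emph{largest scaling constant}: given two solutions $\bar u_\lambda,\bar v_\lambda\in D_+$, it takes the largest $t>0$ with $t\bar v_\lambda\le\bar u_\lambda$ (via Proposition 2.1 of Marano \& Papageorgiou), uses the $(p-1)$-homogeneity of $-\Delta_p$ together with $H(f)_1''(iv)$--$(v)$ to show that $t<1$ would force $\bar u_\lambda-t\bar v_\lambda\in{\rm int}\,C_+$ by the strong comparison principle (Proposition \ref{prop5}), contradicting maximality; hence $t\ge1$, so $\bar v_\lambda\le\bar u_\lambda$, and symmetry finishes. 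You instead recycle the D\'iaz--Saa convexity machinery from Proposition \ref{prop11} with $q=p$: the monotonicity of $\tilde l'$ reduces everything to the sign of $\int_\Omega\bigl[\tfrac{f(z,u_1)}{u_1^{p-1}}-\tfrac{f(z,u_2)}{u_2^{p-1}}\bigr](u_1^p-u_2^p)\,dz$, and $H(f)_1''(v)$ (which indeed yields strict decrease of $x\mapsto f(z,x)/x^{p-1}$, since for any $x>y>0$ one may take $m=x-y$) makes the integrand nonpositive and strictly negative where $u_1\neq u_2$. Both arguments are, in the end, tied to the $p$-Laplacian: yours because the cancellation of the $(\lambda+\eta)$ terms and the sign analysis require $q=p$ in the functional $l$, the paper's because it needs the exact $(p-1)$-homogeneity of the operator under the scaling $v\mapsto tv$. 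Your route has the merit of reusing a tool already built in the paper and avoiding the strong comparison principle altogether; the paper's route yields the intermediate ordering $\bar v_\lambda\le\bar u_\lambda$ and is the same device it uses again in Theorem \ref{th22}. The only points you should make explicit are (a) the standard upgrade of hypothesis $(v)$ from ``for each fixed pair $(x,y)$, for a.a.\ $z$'' to ``for a.a.\ $z$, for all pairs simultaneously'' (countable density plus continuity of $f(z,\cdot)$), which is needed to evaluate the integrand at $x=u_1(z)$, $y=u_2(z)$ --- though the paper's own proof relies on the same uniformization --- and (b) that the test direction $h=u_1^p-u_2^p$ lies in $C^1(\overline\Omega)$, which holds because $u_1,u_2\in D_+$ are $C^1$ and bounded away from zero.
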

\begin{proof}
	By Theorem \ref{th19} we already have a positive solution $\bar{u}_{\lambda}\in D_+$. Suppose that $\bar{u}_{\lambda}$ is another positive solution of \eqref{eqp}. Again we have that $\bar{v}_{\lambda}\in D_+$. By Proposition 2.1 of Marano \& Papageorgiou \cite{15}, we can find $t>0$ such that
	\begin{equation}\label{eq38}
		t\bar{v}_{\lambda}\leq\bar{u}_{\lambda}.
	\end{equation}
	
	Let $t>0$ be the biggest real for which (\ref{eq38}) holds. Suppose that $t<1$. Also, let $\rho=||\bar{u}_{\lambda}||_{\infty}$ and let $\hat{\xi}_{\rho}>0$ be as postulated by hypothesis $H(f)_1^{''}(iv)$. We can always assume that $\hat{\xi}_{\rho}>||\xi||_{\infty}$. Also let $\bar{m}_{\lambda}=\min\limits_{\overline{\Omega}}\bar{v}_{\lambda}>0$. We have
	\begin{eqnarray*}
		&&-\Delta_p(t\bar{v}_{\lambda})+[\xi(z)+\hat{\xi}_{\rho}](t\bar{v}_{\lambda})^{p-1}\\
		&=&t^{p-1}(-\Delta_p\bar{v}_{\lambda}+[\xi(z)+\hat{\xi}_{\rho}]\bar{v}_{\lambda}^{p-1})\\
		&=&t^{p-1}(\lambda\bar{v}_{\lambda}^{p-1}+f(z,\bar{v}_{\lambda})+\hat{\xi}_{\rho}\bar{v}_{\lambda}^{p-1})\ (\mbox{since}\ \bar{v}_{\lambda}\in S_{\lambda})\\
		&\leq&\lambda(t\bar{v}_{\lambda})^{p-1}+f(z,t\bar{v}_{\lambda})+\hat{\xi}_{\rho}(t\bar{v}_{\lambda})^{p-1}-(1-t)\bar{m}_{\lambda}^{p-1}\\
		&&(\mbox{see hypothesis}\ H(f)_1^{''}(v)\ \mbox{and recall that}\ t<1)\\
		&<&\lambda\bar{u}_{\lambda}^{p-1}+f(z,\bar{u}_{\lambda})+\hat{\xi}_{\rho}\bar{u}_{\lambda}^{p-1}\ (\mbox{see (\ref{eq38}) and hypothesis}\ H(f)_1^{''}(iv))\\
		&=&-\Delta_p\bar{u}_{\lambda}+[\xi(z)+\hat{\xi}_{\rho}]\bar{u}_{\lambda}^{p-1}\ \mbox{for almost all}\ z\in\Omega\ (\mbox{since}\ \bar{u}_{\lambda}\in S_{\lambda}),\\
		\Rightarrow&&\bar{u}_{\lambda}-t\bar{v}_{\lambda}\in {\rm int}\, C_+\ (\mbox{see Proposition \ref{prop5}}),
	\end{eqnarray*}
	which contradicts the maximality of $t>0$. Therefore $t\geq 1$ and we have
	$$\bar{v}_{\lambda}\leq\bar{u}_{\lambda}\ (\mbox{see (\ref{eq38})}).$$
	
	Interchanging the roles of $\bar{u}_{\lambda}$ and $\bar{v}_{\lambda}$ in the above argument, we obtain
	\begin{eqnarray*}
		&&\bar{u}_{\lambda}\leq\bar{v}_{\lambda},\\
		&\Rightarrow&\bar{u}_{\lambda}=\bar{v}_{\lambda}.
	\end{eqnarray*}
	
	This proves the uniqueness of the positive solution of problem \eqref{eql}.
\end{proof}

So, we can state the following existence and uniqueness theorem for problem \eqref{eql}.
\begin{theorem}\label{th21}
	If hypotheses $H(\xi),H(\beta),H(f)_1^{''}$ hold, then
	\begin{itemize}
		\item[(a)] for every $\lambda\geq\hat{\lambda}_1=\hat{\lambda}_1(p,\xi,\beta)$, problem \eqref{eql} has no positive solutions;
		\item[(b)] for every $\lambda<\hat{\lambda}_1$, problem \eqref{eql} has a unique positive solution $\bar{u}_{\lambda}\in D_+$ and the map $\lambda\mapsto\bar{u}_{\lambda}$ from $(-\infty,\hat{\lambda}_1)$ into $C^1(\overline{\Omega})$ is
		\begin{itemize}
			\item[$\bullet$] strictly increasing (that is, $\vartheta<\lambda<\hat{\lambda}_1\Rightarrow\bar{u}_{\lambda}-\bar{u}_{\vartheta}\in {\rm int}\, C_+$);
			\item[$\bullet$] left continuous.
		\end{itemize}
	\end{itemize}
\end{theorem}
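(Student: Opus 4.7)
The plan is to assemble Theorem~\ref{th21} from results already in place rather than reproving anything from scratch; each of the three assertions corresponds to an earlier proposition under the strengthened hypothesis $H(f)_1''$.

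\smallskip
\textbf{Part (a).} For $\lambda\geq\hat\lambda_1(p,\xi,\beta)$ I invoke Proposition~\ref{prop17} directly. The hypotheses $H(f)_1''$ include $f(z,0)=0$, positivity $f(z,\cdot)>0$ on $(0,\infty)$, and the subcritical growth bound (implicit in $H(f)_1''(i)\text{--}(ii)$, since $H(f)_1(i)$ gives an $L^\infty$ bound on bounded intervals and $H(f)_1(ii)$ is the $(p-1)$-sublinearity at $+\infty$, which together are far stronger than the $p^{*}-1$ growth needed in Proposition~\ref{prop17}). Hence $S_\lambda=\emptyset$.

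\smallskip
\textbf{Part (b), existence and uniqueness.} For $\lambda<\hat\lambda_1$, Proposition~\ref{prop18} yields $\lambda\in\mathcal{L}$, so $S_\lambda\neq\emptyset$, and by Proposition~\ref{prop8} every element of $S_\lambda$ belongs to $D_+$. Proposition~\ref{prop20}, whose hypotheses are exactly $H(f)_1''$ (the extra condition $(v)$ being the key ingredient for uniqueness via the Picone-type tangency argument with the largest admissible scalar $t>0$), then forces $S_\lambda$ to be a singleton $\{\bar u_\lambda\}\subseteq D_+$. In particular the smallest positive solution given by Proposition~\ref{prop13} coincides with the unique positive solution.

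\smallskip
\textbf{Part (b), strict monotonicity and left continuity.} Since the $p$-Laplacian corresponds to $a(y)=|y|^{p-2}y$, hypotheses $H(a)$ are satisfied, so Proposition~\ref{prop14} applies verbatim to problem~\eqref{eql} with $\lambda^*=\hat\lambda_1$. That proposition is stated for the smallest positive solution, but by the uniqueness just established $\bar u_\lambda$ \emph{is} the smallest (and only) positive solution, so
\[
\vartheta<\lambda<\hat\lambda_1 \;\Longrightarrow\; \bar u_\lambda-\bar u_\vartheta\in \mathrm{int}\,C_+,
\]
and $\lambda_n\to\lambda^-$ with $\lambda<\hat\lambda_1$ yields $\bar u_{\lambda_n}\to \bar u_\lambda$ in $C^1(\overline\Omega)$. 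Both follow by the same strong comparison argument (Proposition~\ref{prop5}) and the Lieberman $C^{1,\tau}$-bounds with the compact embedding $C^{1,\tau}(\overline\Omega)\hookrightarrow C^1(\overline\Omega)$ used in the proof of Proposition~\ref{prop14}.

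\smallskip
There is no genuine obstacle here; the theorem is a packaging of Propositions~\ref{prop17},~\ref{prop18},~\ref{prop20} and~\ref{prop14}. The only point requiring a line of care is verifying that the abstract monotonicity/continuity statement of Proposition~\ref{prop14}, which was proved for the minimal solution in the nonhomogeneous setting, transfers intact to the unique positive solution of the $p$-Laplacian problem \eqref{eql}; this is immediate once uniqueness is in hand.
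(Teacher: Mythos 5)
Your proposal is correct and follows essentially the same route as the paper, which presents Theorem~\ref{th21} as a direct packaging of Proposition~\ref{prop17} (nonexistence for $\lambda\geq\hat\lambda_1$), Proposition~\ref{prop18} (existence for $\lambda<\hat\lambda_1$), Proposition~\ref{prop20} (uniqueness under $H(f)_1''$), and the monotonicity/left-continuity already established in Proposition~\ref{prop14} and Theorem~\ref{th19}. Your added remark that uniqueness makes the unique solution coincide with the minimal one, so that Proposition~\ref{prop14} transfers verbatim, is exactly the (implicit) glue the paper relies on.
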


For the general nonhomogeneous problem, to have uniqueness, we need to set $\xi\equiv 0$. So, we consider the problem:
\begin{equation}\tag{$P_{\lambda}'$}\label{eqp'}
	\left\{\begin{array}{l}
		-{\rm div}\,a(Du(z))=\lambda u(z)^{p-1}+f(z,u(z))\ \mbox{in}\ \Omega,\\
		\frac{\partial u}{\partial n_a}+\beta(z)u^{p-1}=0\ \mbox{on}\ \partial\Omega,u>0,\lambda\in\RR,1<p<\infty.
	\end{array}\right\}
\end{equation}

Then reasoning as in the proof of Proposition \ref{prop11} (see also Fragnelli, Mugnai \& Papageorgiou \cite[Theorem 7]{7}), we have uniqueness of the positive solution and we can formulate the following theorem.
\begin{theorem}\label{th22}
	If hypotheses $H(a),H(\beta),H(f)_1^{''}$ hold, then there exists $\lambda^*\in\RR$ such that
	\begin{itemize}
		\item[(a)] for every $\lambda\geq\lambda^*$, problem \eqref{eqp'} has no positive solutions;
		\item[(b)] for every $\lambda<\lambda^*$, problem \eqref{eqp'} has a unique positive solution $\bar{u}_{\lambda}\in D_+$ and the map $\lambda\mapsto\bar{u}_{\lambda}$ from $(-\infty,\lambda^*)$ into $C^1(\overline{\Omega})$ is
		\begin{itemize}
			\item[$\bullet$] strictly increasing (that is, $\vartheta<\lambda<\lambda^*\Rightarrow\bar{u}_{\lambda}-\bar{u}_{\vartheta}\in {\rm int}\, C_+$);
			\item[$\bullet$] left continuous;
		\end{itemize}
	\end{itemize}
\end{theorem}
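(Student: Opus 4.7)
My plan is to reduce Theorem \ref{th22} to Theorem \ref{th16} for everything except uniqueness, and then to prove uniqueness by a Diaz--Saa type convexity argument modeled on Proposition \ref{prop11}.

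First I would observe that with $\xi\equiv 0$ hypothesis $H(\xi)$ is trivially satisfied, and $H(f)_1''$ clearly strengthens $H(f)_1$. So Theorem \ref{th16} applied directly to \eqref{eqp'} yields the critical value $\lambda^*\in\RR$, nonexistence for $\lambda\geq\lambda^*$, existence of a positive solution $\bar u_\lambda\in D_+$ for each $\lambda<\lambda^*$, and the strict monotonicity and left continuity of the smallest solution map. All that remains is to show that for every $\lambda<\lambda^*$ this positive solution is in fact unique.

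For the uniqueness step, let $\bar u_\lambda,\bar v_\lambda\in D_+$ be two positive solutions of \eqref{eqp'}. Mimicking Proposition \ref{prop11} (with the $\xi$-term gone), I would consider the functional $l:L^1(\Omega)\to\RR\cup\{+\infty\}$, with $q\in(1,p]$ as in $H(a)(iv)$, given by
\[
l(u)=\int_\Omega G(Du^{1/q})\,dz+\frac{1}{p}\int_{\partial\Omega}\beta(z)u^{p/q}\,d\sigma
\]
on $\{u\geq 0:u^{1/q}\in W^{1,p}(\Omega)\}$ and $+\infty$ otherwise. Convexity of $t\mapsto G_0(t^{1/q})$ together with $q\leq p$ and $\beta\geq 0$ make $l$ convex, and Fatou yields lower semicontinuity. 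G\^ateaux-differentiating $l$ at $\bar u_\lambda^q$ along $h\in C^1(\overline\Omega)$ and using the nonlinear Green identity together with the Robin boundary condition $\partial u/\partial n_a+\beta u^{p-1}=0$ (which forces the boundary contribution from the integration by parts to cancel exactly the variation of the $\beta$-boundary term in $l$), I obtain
\[
l'(\bar u_\lambda^q)(h)=\frac{1}{q}\int_\Omega\frac{-{\rm div}\,a(D\bar u_\lambda)}{\bar u_\lambda^{q-1}}\,h\,dz=\frac{1}{q}\int_\Omega\frac{\lambda\bar u_\lambda^{p-1}+f(z,\bar u_\lambda)}{\bar u_\lambda^{q-1}}\,h\,dz,
\]
and the analogous formula at $\bar v_\lambda^q$. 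Monotonicity of $l'$ with the choice $h=\bar u_\lambda^q-\bar v_\lambda^q$ then gives
\[
0\leq\int_\Omega\left[\frac{\lambda\bar u_\lambda^{p-1}+f(z,\bar u_\lambda)}{\bar u_\lambda^{q-1}}-\frac{\lambda\bar v_\lambda^{p-1}+f(z,\bar v_\lambda)}{\bar v_\lambda^{q-1}}\right](\bar u_\lambda^q-\bar v_\lambda^q)\,dz,
\]
and I would invoke the quantitative strict decay of $u\mapsto f(z,u)/u^{p-1}$ from $H(f)_1''(v)$ to show that the integrand is pointwise nonpositive and strictly negative wherever $\bar u_\lambda\neq\bar v_\lambda$, so that $\bar u_\lambda=\bar v_\lambda$.

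The main obstacle is the $\lambda u^{p-1}$ term in the reaction, which after the division contributes $\lambda(\bar u_\lambda^{p-q}-\bar v_\lambda^{p-q})$ to the integrand; this has the ``wrong'' sign when $q<p$ and $\lambda>0$. For the $p$-Laplacian, hypothesis $H(a)(iv)$ forces $q=p$, the linear terms cancel exactly, and only the purely monotone quotient $f(z,\cdot)/\cdot^{p-1}$ survives, closing the argument. For the general operator this is precisely the step where one invokes the refined analysis of Fragnelli, Mugnai and Papageorgiou \cite[Theorem 7]{7}, which exploits the quantitative form of $H(f)_1''(v)$ to absorb the parameter contribution and recover the sign of the integrand; that is the step I expect to be the most delicate.
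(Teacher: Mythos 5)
Your overall strategy coincides with the paper's: the paper's entire argument for Theorem \ref{th22} is the single sentence ``reasoning as in the proof of Proposition \ref{prop11} (see also \cite[Theorem 7]{7}), we have uniqueness of the positive solution'', with everything else imported from Theorem \ref{th16}. Your reduction of part (a), of existence, and of the monotonicity and left-continuity assertions to Theorem \ref{th16} with $\xi\equiv 0$ is exactly what is intended, and your computation of $l'$ --- in particular the cancellation between the boundary term produced by the nonlinear Green identity under the Robin condition and the variation of the $\beta$-term of $l$ --- is correct and is the same mechanism as in Proposition \ref{prop11}.

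The step you leave open is, however, a genuine gap, and it is not filled by the appeal to \cite[Theorem 7]{7}. The convexity in $H(a)(iv)$ is available only for $t\mapsto G_0(t^{1/q})$, so the Diaz--Saa monotonicity inequality involves the quotient $\frac{\lambda x^{p-1}+f(z,x)}{x^{q-1}}=x^{p-q}\left(\lambda+\frac{f(z,x)}{x^{p-1}}\right)$, whereas $H(f)_1''(v)$ only makes the bracket $\lambda+\frac{f(z,x)}{x^{p-1}}$ strictly decreasing. Since $f(z,x)>0$ for $x>0$, that bracket is positive (at least for $\lambda\geq 0$), and multiplying a positive decreasing function by the strictly increasing factor $x^{p-q}$ destroys the sign of the integrand whenever $q<p$; the Brezis--Oswald uniqueness theorem of \cite{7} likewise requires monotonicity of the reaction quotient with the exponent $q-1$ dictated by $H(a)(iv)$, which is not what $H(f)_1''(v)$ provides. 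Consequently your argument closes only when $q=p$ in $H(a)(iv)$ --- e.g., for the $p$-Laplacian, which is already Theorem \ref{th21} --- or if one strengthens $H(f)_1''(v)$ to a decay condition on $x\mapsto f(z,x)/x^{q-1}$ strong enough to absorb $\lambda x^{p-q}$ on the compact range $[\min(\bar{u}_\lambda\wedge\bar{v}_\lambda),\max(\|\bar{u}_\lambda\|_\infty,\|\bar{v}_\lambda\|_\infty)]$. You were right to single this out as the delicate point, but as written it is not merely delicate: it is unresolved, in your proposal and in the paper alike.
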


\section{$(p-1)$-superlinear perturbation}

In this section we examine what happens in problem \eqref{eqp} when the perturbation $f(z,\cdot)$ is $(p-1)$-superlinear. We do not assume that $f(z,\cdot)$ satisfies the usual (for ``superlinear'' problems) ``Ambrosetti-Rabinowitz condition''  (the ``AR-condition'' for short). Instead, we employ a less restrictive condition involving the function
$$d(z,x)=f(z,x)x-pF(z,x)\ \mbox{for all}\ (z,x)\in\Omega\times\RR.$$

In this way we incorporate in our framework $(p-1)$-superlinear functions with ``slower'' growth near $+\infty$, which fail to satisfy the AR-condition.

So, we introduce the following condition on the perturbation $f(z,x)$.

\smallskip
$H(f)_2:$ $f:\Omega\times\RR\rightarrow\RR$ is a Carath\'eodory function such that $f(z,0)=0$ for almost all $z\in\Omega$ and
\begin{itemize}
	\item[(i)] $|f(z,x)|\leq a(z)(1+x^{r-1})$ for almost all $z\in\Omega$, and for all $x\geq 0$, with $a\in L^{\infty}(\Omega)$, $p<r<p^*$;
	\item[(ii)] if $F(z,x)=\int^x_0f(z,s)ds$, then $\lim\limits_{x\rightarrow+\infty}\frac{F(z,x)}{x^p}=+\infty$ uniformly for almost all $z\in\Omega$;
	\item[(iii)] if $d(z,x)=f(z,x)x-pF(z,x)$, then $d(z,x)\leq d(z,y)+\nu(z)$ for almost all $z\in\Omega$, and for all $0\leq x\leq y$ with $\nu(\cdot)\in L^1(\Omega)$;
	\item[(iv)] $\lim\limits_{x\rightarrow 0^+}\frac{f(z,x)}{x^{\tau-1}}=0$ uniformly for almost all $z\in\Omega$, with $1<\tau<q$, $q\leq p$ as in $H(a)(iv)$ and there exist $s\in(\tau,q)$, $\delta_0>0$ such that $\tilde{c}_0x^{s-1}\leq f(z,x)$ for almost all $z\in\Omega$, $x\in[0,\delta_0]$ with $\tilde{c}_0>0$;
	\item[(v)] for every $\rho>0$, there exists $\hat{\xi}_{\rho}>0$ such that for almost all $z\in\Omega$ the mapping $x\mapsto f(z,x)+\hat{\xi}_{\rho}x^{p-1}$ is nondecreasing on $[0,\rho]$.
\end{itemize}
\begin{remark}
	Since we are looking for positive solutions and the above hypotheses concern the positive semiaxis $\RR_+=\left[0,+\infty\right)$, we may assume without any loss of generality, as we did in the sublinear case,  that $f(z,x)=0$ for almost all $z\in\Omega$, all $x\leq 0$. Hypotheses $H(f)_2(ii),(iii)$ imply that
	\begin{equation}\label{eq39}
		\lim\limits_{x\rightarrow+\infty}\frac{f(z,x)}{x^{p-1}}=+\infty\ \mbox{uniformly for almost all}\ z\in\Omega.
	\end{equation}
\end{remark}
	
	So, the perturbation $f(z,\cdot)$ is $(p-1)$-superlinear. Usually for such problems, the superlinerity is expressed through the AR-condition, which says that there exist $\tau>p$ and $M>0$ such that
	\begin{equation}\label{eq40}
		0<\tau F(z,x)\leq f(z,x)x\ \mbox{for almost all}\ z\in\Omega,\ \mbox{and for all}\ x\geq M\ \mbox{where}\ {\rm ess}\inf\limits_{\Omega}F(\cdot,M)>0.
	\end{equation}

	Here we have a unilateral version of the AR-condition, since $f(z,\cdot)_{\left(-\infty,0\right]}=0$. Integrating (\ref{eq39}) we obtain the more general condition
	\begin{equation}\label{eq41}
		c_{13}x^{\tau}\leq F(z,x)\ \mbox{for almost all}\ z\in\Omega,\ \mbox{for all}\ x\geq M,\ \mbox{and for some}\ c_{13}>0.
	\end{equation}

Evidently, (\ref{eq40}) and (\ref{eq41}) imply that (\ref{eq39}) holds. Using the AR-condition (\ref{eq40}) we can easily verify the C-condition for the energy functional. However, from (\ref{eq41}) we see that the AR-condition is rather restrictive. It excludes from consideration superlinear functions with slower growth near $+\infty$ (see the examples below). We have replaced the AR-condition by hypotheses $H(f)_2(ii),(iii)$, which incorporate in our framework such functions. Hypothesis $H(f)_2(iii)$ is a quasi-monotonicity condition on $d(z,\cdot)$ on $\RR_+$. This hypothesis is a slightly more general version of a condition used by Li \& Yang \cite{13}, who compared this condition with other superlinearity conditions that can be found in the literature.

\smallskip
{\it Examples.}
	The following functions satisfy hypotheses $H(f)_2$. For the sake of simplicity we drop the $z$-dependence.
	$$f_1(x)=\left\{\begin{array}{ll}
		0&\mbox{if}\ x<0\\
		x^{\tau-1}-2x^{\vartheta-1}&\mbox{if}\ 0\leq x\leq 1\\
		x^{r-1}-2x^{p-1}&\mbox{if}\ 1<x
	\end{array}\right.$$
	with $1<\tau<\vartheta<p<r$
	$$f_2(x)=\left\{\begin{array}{ll}
		0&\mbox{if}\ x<0\\
		x^{\tau-1}-2x^{\vartheta-1}&\mbox{if}\ 0\leq x\leq 1\\
		x^{p-1}(\ln x-1)&\mbox{if}\ 1<x
	\end{array}\right.$$
	with $1<\tau<\vartheta<p$. Note that $f_1$ satisfies the AR-condition, whereas $f_2$ does not. Also, both functions may be sign-changing.
	
	As before, we denote
	\begin{center}
		$\mathcal{L}=\{\lambda>0:\mbox{problem \eqref{eqp} has a positive solution}\}$,\\
		$S_{\lambda}=$ the set of all positive solutions of problem \eqref{eqp}.
	\end{center}

\begin{prop}\label{prop23}
	If hypotheses $H(a),H(\xi),H(\beta),H(f)_2$ hold, then $\mathcal{L}\neq\emptyset$ and $S_{\lambda}\subseteq D_+$.
\end{prop}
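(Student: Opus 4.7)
Plan: The argument closely mirrors Proposition 8, with one essential modification needed to tame the $(p-1)$-superlinear growth of $f$ at infinity, namely an upper truncation of the perturbation. Fix $\eta > \|\xi\|_\infty$ and, for a $\rho > 0$ to be specified later, define the Carath\'eodory function
$$
e_\lambda(z,x) = \begin{cases}
0 & \text{if } x \leq 0, \\
(\lambda + \eta)x^{p-1} + f(z,x) & \text{if } 0 < x \leq \rho, \\
(\lambda + \eta)\rho^{p-1} + f(z,\rho) & \text{if } x > \rho,
\end{cases}
$$
which replaces \eqref{eq5}. Because $|f(z,\cdot)| \le a(z)(1 + \rho^{r-1})$ on $[0,\rho]$ by $H(f)_2(i)$, the map $e_\lambda$ grows at most like $(\lambda+\eta)x^{p-1}$ plus an $L^\infty$-term on $\{x \le \rho\}$ and is bounded above on $\{x > \rho\}$. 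So the functional
$$
\varphi_\lambda(u) = \frac{1}{p}\mu(u) + \frac{\eta}{p}\|u\|_p^p - \int_\Omega E_\lambda(z,u)\,dz
$$
is coercive via Corollary \ref{cor3} and $\eta > \|\xi\|_\infty$, and sequentially weakly lower semicontinuous by the Sobolev embedding and compactness of the trace map. The Weierstrass--Tonelli theorem yields a global minimizer $u_\lambda \in W^{1,p}(\Omega)$.

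To show $u_\lambda \neq 0$, I would reuse verbatim the concave-near-zero computation from the proof of Proposition \ref{prop8}: the lower bound $f(z,x) \geq \tilde{c}_0 x^{s-1}$ on $[0,\delta_0]$ from $H(f)_2(iv)$ (with $s < q \leq p$), combined with $H(a)(iv)$, gives $\varphi_\lambda(t\hat{u}_1) < 0$ for the test element $t\hat{u}_1(q,\xi_0,\beta_0)$ at sufficiently small $t > 0$, where $\xi_0 = \xi/\tilde{c}_0$ and $\beta_0 = \beta/\tilde{c}_0$. Hence $\varphi_\lambda(u_\lambda) < 0 = \varphi_\lambda(0)$. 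Testing the Euler--Lagrange equation with $-u_\lambda^-$ (Lemma \ref{lem2} and $H(\beta)$) gives $u_\lambda \geq 0$. The nonlinear regularity of Lieberman \cite{12} and Papageorgiou--R\u{a}dulescu \cite{20} upgrade this to $u_\lambda \in C_+ \setminus \{0\}$, and hypothesis $H(f)_2(v)$ with the Pucci--Serrin strong maximum principle gives $u_\lambda \in D_+$. The same regularity-plus-maximum-principle chain applied to any $u \in S_\lambda$ simultaneously establishes $S_\lambda \subseteq D_+$.

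The remaining and principal step is to verify that $u_\lambda(z) \leq \rho$ on all of $\overline{\Omega}$, so that $e_\lambda(z,u_\lambda(z)) = (\lambda+\eta)u_\lambda(z)^{p-1} + f(z,u_\lambda(z))$ pointwise and $u_\lambda$ solves the original problem \eqref{eqp} rather than its truncated version. I would achieve this by first choosing $\lambda$ sufficiently negative and then choosing $\rho$ larger than the a priori $L^\infty$-bound for the truncated minimizer produced by the Moser-type iteration of Papageorgiou--R\u{a}dulescu \cite{20}, whose constants depend only on $\|u_\lambda\|_{W^{1,p}}$ (controlled via coercivity) and the truncation level. Once consistency $u_\lambda \leq \rho$ is secured, one obtains $\lambda \in \mathcal{L}$ and therefore $\mathcal{L} \neq \emptyset$.

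The main obstacle is precisely this last coordination: coupling the choices of $\rho$ and $\lambda$ so that the capped minimizer remains below the cap. Unlike the sublinear framework of Proposition \ref{prop8}, where $(p-1)$-sublinearity of $f$ delivers coercivity without any truncation from above, here the superlinear behavior forces the truncation, and the non-trivial task is to close the loop by showing that the truncation is ultimately inactive.
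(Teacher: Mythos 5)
Your overall architecture (upper truncation of $f$ at a level $\rho$, then the direct method as in Proposition \ref{prop8}) is a genuinely different route from the paper, which never truncates: the paper works with the full functional $\psi_\lambda(u)=\frac{1}{p}\mu(u)+\frac{\eta}{p}\|u^-\|^p_p-\frac{\lambda}{p}\|u^+\|^p_p-\int_\Omega F(z,u^+)dz$, establishes mountain pass geometry for $\lambda<0$ with $|\lambda|>\|\xi\|_\infty$ (using $H(f)_2(iv)$ near zero and $H(f)_2(ii)$ at infinity), and spends the bulk of the proof verifying the Cerami condition via the quasimonotonicity hypothesis $H(f)_2(iii)$ on $d(z,x)=f(z,x)x-pF(z,x)$. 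That Cerami verification is not dispensable bookkeeping: it is reused verbatim in Propositions \ref{prop25} and \ref{prop28}, so even a correct minimization proof of Proposition \ref{prop23} would leave the later results unsupported.

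More importantly, your proof has a genuine gap exactly where you flag it, and the mechanism you propose to close it does not work. You want to choose $\lambda$ first and then take $\rho$ larger than the $L^\infty$-bound of the truncated minimizer coming from the Moser-type iteration of \cite{20}. But, as you yourself note, that bound depends on the truncation level $\rho$ (through the growth constants of $e_\lambda$ and through the coercivity constant controlling $\|u_\lambda\|$, both of which degrade as $\rho$ grows, since $f$ is $(p-1)$-superlinear). Asking for $\rho$ to exceed a constant $C(\rho)$ that grows with $\rho$ is circular, and nothing in the hypotheses guarantees $C(\rho)\le\rho$ for some $\rho$. The loop can be closed, but only by reversing the order of quantifiers and abandoning the a priori bound: fix $\rho>\delta_0$ first, then choose $\lambda<0$ so negative that $(\lambda+\|\xi\|_\infty)\rho^{p-1}+f(z,\rho)\le 0$ for almost all $z\in\Omega$ (possible for fixed $\rho$ by $H(f)_2(i)$), and test the truncated Euler--Lagrange equation with $h=(u_\lambda-\rho)^+$; since then $(\lambda+\eta)\rho^{p-1}+f(z,\rho)\le(\xi(z)+\eta)\rho^{p-1}$, the monotonicity of $x\mapsto x^{p-1}$ forces $|\{u_\lambda>\rho\}|_N=0$. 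As written, your proof establishes existence of a minimizer of the truncated problem but not a solution of \eqref{eqp}, so the claim $\mathcal{L}\neq\emptyset$ is not yet proved. (The remaining ingredients --- coercivity of the truncated functional, nontriviality via $H(f)_2(iv)$ with $s<q$, nonnegativity, regularity, and $S_\lambda\subseteq D_+$ --- are fine.)
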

\begin{proof}
	Let $\eta>||\xi||_{\infty}$ and consider the functional $\psi_{\lambda}:W^{1,p}(\Omega)\rightarrow\RR$ defined by
	$$\psi_{\lambda}(u)=\frac{1}{p}\mu(u)+\frac{\eta}{p}||u^-||^p_p-\frac{\lambda}{p}||u^+||^p_p-\int_{\Omega}F(z,u^+)dz\ \mbox{for all}\ u\in W^{1,p}(\Omega).$$
	
	Hypotheses $H(f)_2(i),(iv)$ imply that given $\epsilon>0$, we can find $c_{14}=c_{14}(\epsilon)>0$ such that
	\begin{equation}\label{eq42}
		F(z,x)\leq\frac{\epsilon}{p}x^{\tau}+c_{14}x^r\ \mbox{for almost all}\ z\in\Omega,\ \mbox{and for all}\ x\geq 0.
	\end{equation}
	
	Then for $\lambda<0$, with $|\lambda|>||\xi||_{\infty}$, we have
	\begin{eqnarray}\label{eq43}
		\psi_{\lambda}(u)&\geq&\frac{1}{p}\mu(u^-)+\frac{\eta}{p}||u^-||^p_p+\frac{1}{p}\mu(u^+)+\frac{|\lambda|}{p}||u^+||^p_p-\epsilon c_{15}||u||^{\tau}-c_{16}||u||^{\tau}\nonumber\\
		&&\mbox{with}\ c_{15},c_{16}>0\ (\mbox{see (\ref{eq42})})\nonumber\\
		&\geq&\left[c_{17}-(\epsilon c_{15}||u||^{\tau-p}+c_{16}||u||^{r-p})\right]||u||^p\ \mbox{for some}\ c_{17}>0.
	\end{eqnarray}
	
	Let $k_0(t)=\epsilon c_{15}t^{\tau-p}+c_{16}t^{r-p}$ $t\geq 0$. Since $1<\tau<p<r$, we see that $k_0(t)\rightarrow+\infty$ as $t\rightarrow 0^+$ and as $t\rightarrow+\infty$. So, we can find $t_0>0$ such that $k_0(t_0)=\min\limits_{t>0}k_0$. We have
	\begin{eqnarray*}
		&&k'_0(t_0)=0,\\
		&\Rightarrow&t_0=\left[\frac{\epsilon c_{15}(p-\tau)}{c_{16}(r-p)}\right]^{\frac{1}{r-\tau}}\,.
	\end{eqnarray*}
	
	Then $k_0(t_0)\rightarrow 0^+$ as $\epsilon\rightarrow 0^+$. So,  it follows from (\ref{eq43}) that we can find small enough $\rho\in(0,1)$  such that
	\begin{equation}\label{eq44}
		0<\inf\{\psi_{\lambda}(u):||u||=\rho\}=m^{\lambda}_{\rho}.
	\end{equation}
	
	Hypothesis $H(f)_2(ii)$ implies that if $u\in D_+$, then
	\begin{equation}\label{eq45}
		\psi_{\lambda}(tu)\rightarrow-\infty\ \mbox{as}\ t\rightarrow+\infty.
	\end{equation}
	\begin{claim}
		For every $\lambda\in\RR,\psi_{\lambda}(\cdot)$ satisfies the C-condition.
	\end{claim}
	
	Consider a sequence $\{u_n\}_{n\geq 1}\subseteq W^{1,p}(\Omega)$ such that
	\begin{eqnarray}
		&&|\psi_{\lambda}(u_n)|\leq M_1\ \mbox{for some}\ M_1>0,\ \mbox{and for all}\ n\in\NN,\label{eq46}\\
		&&(1+||u_n||)\psi'_{\lambda}(u_n)\rightarrow 0\ \mbox{in}\ W^{1,p}(\Omega)^*\ \mbox{as}\ n\rightarrow\infty.\label{eq47}
	\end{eqnarray}
	
	From (\ref{eq47}) we have
	\begin{eqnarray}\label{eq48}
		&&|\left\langle A(u_n),h\right\rangle+\int_{\Omega}\xi(z)|u_n|^{p-2}u_nhdz+\int_{\partial\Omega}\beta(z)|u_n|^{p-2}hd\sigma-\eta\int_{\Omega}(u_n^-)^{p-1}hdz\nonumber\\
		&&\hspace{1cm}-\int_{\Omega}\lambda(u_n^+)^{p-1}hdz-\int_{\Omega}f(z,u_n^+)hdz|\leq\frac{\epsilon_n||h||}{1+||u_n||}\\
		&&\mbox{for all}\ h\in W^{1,p}(\Omega),\ \mbox{with}\ \epsilon_n\rightarrow 0^+\nonumber.
	\end{eqnarray}
	
	In (\ref{eq48}) we choose $h=-u_n^-\in W^{1,p}(\Omega)$. Then
	\begin{eqnarray}\label{eq49}
			&&\mu(u_n^-)+\eta||u_n^-||^p_p\leq\epsilon_n\ \mbox{for all}\ n\in\NN,\nonumber\\
			&\Rightarrow&c_{18}||u_n^-||^p\leq \epsilon_n\ \mbox{for some}\ c_{18}>0,\ \mbox{and for all}\ n\in\NN\ (\mbox{recall that}\ \eta>||\xi||_{\infty}),\nonumber\\
			&\Rightarrow&u_n^-\rightarrow 0\ \mbox{in}\ W^{1,p}(\Omega)\ \mbox{as}\ n\rightarrow\infty.
	\end{eqnarray}
	
	Next, in (\ref{eq48}) we choose $h=u_n^+\in W^{1,p}(\Omega)$. Then
	\begin{equation}\label{eq50}
		-\mu(u_n^+)+\lambda||u_n^+||^p_p+\int_{\Omega}f(z,u_n^+)u_n^+dz\leq\epsilon_n\ \mbox{for all}\ n\in\NN.
	\end{equation}
	
	From (\ref{eq46}) and (\ref{eq49}) we have
	\begin{equation}\label{eq51}
		\mu(u_n^+)-\lambda||u_n^+||^p_p-\int_{\Omega}pF(z,u_n^+)dz\leq M_2\ \mbox{for some}\ M_2>0,\ \mbox{and for all}\ n\in\NN.
	\end{equation}
	
	We add (\ref{eq50}), (\ref{eq51}) and obtain
	\begin{equation}\label{eq52}
		\int_{\Omega}d(z,u_n^+)dz\leq M_3\ \mbox{for some}\ M_3>0,\ \mbox{and for all}\ n\in\NN.
	\end{equation}
	
	We will use (\ref{eq52}) to show that $\{u_n^+\}_{n\geq 1}\subseteq W^{1,p}(\Omega)$ is bounded. Arguing by contradiction, suppose that $||u_n^+||\rightarrow\infty$. We set $y_n=\frac{u_n^+}{||u_n^+||}$ for all $n\in\NN$. We have $||y_n||=1$ for all $n\in\NN$ and so we may assume that
	\begin{equation}\label{eq53}
		y_n\stackrel{w}{\rightarrow}y\ \mbox{in}\ W^{1,p}(\Omega)\ \mbox{and}\ y_n\rightarrow y\ \mbox{in}\ L^p(\Omega)\ \mbox{and in}\ L^p(\partial\Omega)\ \mbox{as}\ n\rightarrow\infty.
	\end{equation}
	
	First, we assume that $y\neq 0$. Let $\Omega_0=\{z\in\Omega:y(z)=0\}$. Then $|\Omega\backslash\Omega_0|_N>0$ (by $|\cdot|_N$ we denote the Lebesgue measure on $\RR^N$) and $u_n^+(z)\rightarrow+\infty$ for almost all $z\in\Omega\backslash\Omega_0$ as $n\rightarrow\infty$. Hence hypothesis $H(f)_2(ii)$ implies that
	\begin{eqnarray}\label{eq54}
		&&\frac{F(z,u_n^+(z))}{||u_n^+||^p}=\frac{F(z,u_n^+(z))}{u_n^+(z)^p}y_n(z)^p\rightarrow+\infty\ \mbox{for almost all}\ z\in\Omega\backslash\Omega_0\ \mbox{as}\ n\rightarrow\infty,\nonumber\\
		&\Rightarrow&\int_{\Omega}\frac{F(z,u_n^+)}{||u_n^+||^p}dz\rightarrow+\infty\ \mbox{as}\ n\rightarrow+\infty\ (\mbox{by Fatou's lemma}).
	\end{eqnarray}
	
	Corollary \ref{cor3} and hypothesis $H(a)(iv)$ imply that
	\begin{equation}\label{eq55}
		G(y)\leq c_{19}(|y|^q+|y|^p)\ \mbox{for some}\ c_{19}>0,\ \mbox{and for all}\ y\in\RR^N.
	\end{equation}
	
	From (\ref{eq46}) and (\ref{eq49}), we have
	\begin{eqnarray}\label{eq56}
		&&-\int_{\Omega}G(Du_n^+)dz-\frac{1}{p}\int_{\Omega}\xi(z)(u_n^+)^pdz-\frac{1}{p}\int_{\partial\Omega}\beta(z)(u_n^+)^pd\sigma+\frac{\lambda}{p}||u_n^+||^p_p+\nonumber\\
		&&\int_{\Omega}F(z,u_n^+)dz\leq M_4\ \mbox{for some}\ M_4>0,\ \mbox{and for all}\ n\in\NN\nonumber\\
		&\Rightarrow&\int_{\Omega}\frac{F(z,u_n^+)}{||u_n^+||^p}dz\leq M_5\ \mbox{for some}\ M_5>0,\ \mbox{and for all}\ n\in\NN\\
		&&(\mbox{see (\ref{eq53}), (\ref{eq55}) and hypotheses}\ H(\xi),H(\beta)).\nonumber
	\end{eqnarray}
	
	Comparing (\ref{eq54}) and (\ref{eq56}), we get a contradiction.
	
	So, we assume that $y=0$. We consider the $C^1$-functional $\hat{\psi}_{\lambda}:W^{1,p}(\Omega)\rightarrow\RR$ defined by
	\begin{eqnarray*}
		&&\hat{\psi}_{\lambda}(u)=\frac{c_1}{p(p-1)}||Du||^p_p+\frac{1}{p}\int_{\Omega}\xi(z)|u|^pdz+\frac{1}{p}\int_{\partial\Omega}\beta(z)|u|^pd\sigma+\frac{\eta}{p}||u^-||^p_p\\
		&&-\frac{\lambda}{p}||u^+||^p_p-\int_{\Omega}F(z,u^+)dz\ \mbox{for all}\ u\in W^{1,p}(\Omega).
	\end{eqnarray*}
	
	Evidently, $\hat{\psi}_{\lambda}\leq\psi_{\lambda}$ (see Corollary \ref{cor3}).
	
	We define $\vartheta_n(t)=\hat{\psi}_{\lambda}(tu_n^+)$ for all $t\in[0,1]$, and for all $n\in\NN$. Let $t_n\in[0,1]$ be such that
	\begin{equation}\label{eq57}
		\vartheta_n(t_n)=\max\limits_{0\leq t\leq 1}\vartheta_n(t)=\max\limits_{0\leq t\leq 1}\hat{\psi}_{\lambda}(t u_n^+)\ \mbox{for all}\ n\in\NN.
	\end{equation}
	
	For $\gamma>0$, let $v_n=(2\gamma)^{1/p}y_n\in W^{1,p}(\Omega)$. Evidently, $v_n\rightarrow 0$ in $L^r(\Omega)$ (see (\ref{eq53}) and recall that we have assumed that $y=0$). Then
	\begin{equation}\label{eq58}
		\int_{\Omega}F(z,v_n)dz\rightarrow 0\ \mbox{as}\ n\rightarrow\infty.
	\end{equation}
	
	Since $||u_n^+||\rightarrow\infty$, we can find $n_0\in\NN$ such that
	\begin{equation}\label{eq59}
		(2\gamma)^{1/p}\frac{1}{||u_n^+||}\in(0,1)\ \mbox{for all}\ n\geq n_0.
	\end{equation}
	
	Then (\ref{eq57}) and (\ref{eq59}) imply that
	\begin{eqnarray}\label{eq60}
		\vartheta_n(t_n)&\geq&\vartheta_n\left(\frac{(2\gamma)^{1/p}}{||u_n^+||}\right)\ \mbox{for all}\ n\geq n_0\nonumber\\
		\Rightarrow\hat{\psi}_{\lambda}(t_nu_n^+)&\geq&\hat{\psi}_{\lambda}((2\gamma)^{1/p}y_n)=\hat{\psi}_{\lambda}(v_n)\ \mbox{for all}\ n\geq n_0\nonumber\\
		&\geq&\frac{2\gamma c_1}{p(p-1)}\left(||Dy_n||^p_p+\frac{p-1}{c_1}\int_{\Omega}[\xi(z)+\eta-\lambda]y_n^pdz\right)\nonumber\\
		&&-\left(\int_{\Omega}F(z,v_n)dz+\frac{\eta}{p}||v_n||^p_p\right)\nonumber\\
		&\geq&\frac{2\gamma c_{20}}{p(p-1)}-\left(\int_{\Omega}F(z,v_n)dz+\frac{\eta}{p}||v_n||^p_p\right)\ \mbox{for some}\ c_{20}>0,\ \mbox{and for all}\ n\geq n_0.
	\end{eqnarray}
	
	Recall that $v_n\rightarrow 0$ in $L^p(\Omega)$. Using this fact and (\ref{eq58}) in (\ref{eq60}), we see that
	$$\hat{\psi}_{\lambda}(t_nu_n^+)\geq\frac{\gamma c_{20}}{p(p-1)}\ \mbox{for some}\ n\geq n_1\geq n_0.$$
	
	However,
	recall that $\gamma>0$ is arbitrary. So, it follows that
	\begin{equation}\label{eq61}
		\hat{\psi}_{\lambda}(t_nu_n^+)\rightarrow+\infty\ \mbox{as}\ n\rightarrow\infty.
	\end{equation}
	
	We have $0\leq t_nu_n^+\leq u_n^+$ for all $n\in\NN$. So, on account of hypothesis $H(f)_2(iii)$, we have
	\begin{eqnarray}\label{eq62}
		&&\int_{\Omega}d(z,t_nu_n^+)dz\leq\int_{\Omega}d(z,u_n^+)dz+||\nu||_1\leq M_6\\
		&&\mbox{for some}\ M_6>0,\ \mbox{and for all}\ n\in\NN\ (\mbox{see (\ref{eq52})}).\nonumber
	\end{eqnarray}
	
	We know that
	\begin{eqnarray}\label{eq63}
		&&\hat{\psi}_{\lambda}(0)=0\ \mbox{and}\ \hat{\psi}_{\lambda}(u_n^+)\leq M_7\ \mbox{for some}\ M_7>0\\
		&&(\mbox{see (\ref{eq46}), (\ref{eq52}) and recall that}\ \hat{\psi}_\lambda\leq\psi_\lambda).\nonumber
	\end{eqnarray}
	
	From (\ref{eq61}) and (\ref{eq63}) we infer that $t_n\in(0,1)$ for all $n\geq n_2$. Hence we have
	\begin{equation}\label{eq64}
		0=t_n\frac{d}{dt}\hat{\psi}_\lambda(tu_n^+)|_{t=t_n}=\left\langle \psi'_{\lambda}(t_nu_n^+),t_nu_n^+\right\rangle\ \mbox{for all}\ n\geq n_2\ (\mbox{see (\ref{eq57})}).
	\end{equation}
	
	Combining (\ref{eq62}) and (\ref{eq64}) we see that
	\begin{equation}\label{eq65}
		p\hat{\psi}_\lambda(t_nu_n^+)\leq M_6\ \mbox{for all}\ n\geq n_2.
	\end{equation}
	
	Comparing (\ref{eq61}) and (\ref{eq65}) we have a contradiction. Therefore
	\begin{eqnarray*}
		&&\{u_n^+\}_{n\geq 1}\subseteq W^{1,p}(\Omega)\ \mbox{is bounded},\\
		&\Rightarrow&\{u_n\}_{n\geq 1}\subseteq W^{1,p}(\Omega)\ \mbox{is bounded (see (\ref{eq49}))}.
	\end{eqnarray*}
	
	So, we may assume that
	\begin{equation}\label{eq66}
		u_n\stackrel{w}{\rightarrow}u\ \mbox{in}\ W^{1,p}(\Omega)\ \mbox{and}\ u_n\rightarrow u\ \mbox{in}\ L^r(\Omega)\ \mbox{and in}\ L^p(\partial\Omega).
	\end{equation}
	
	We return to (\ref{eq48}) and choose $h=u_n-u\in W^{1,p}(\Omega)$, pass to the limit as $n\rightarrow\infty$ and use (\ref{eq66}). Then
	\begin{eqnarray*}
		&&\lim\limits_{n\rightarrow\infty}\left\langle A(u_n),u_n-u\right\rangle=0,\\
		&\Rightarrow&u_n\rightarrow u\ \mbox{in}\ W^{1,p}(\Omega)\ (\mbox{see Proposition \ref{prop4}}).
	\end{eqnarray*}
	
	Therefore $\psi_\lambda$ satisfies the C-condition and this proves the claim.
	
	Then (\ref{eq44}), (\ref{eq45}) and the claim, permit the use of Theorem \ref{th1} (the mountain pass theorem). So, we can find $u_\lambda\in W^{1,p}(\Omega)$ $\left(\lambda<0,|\lambda|>||\xi||_{\infty}\right)$ such that
	\begin{equation}\label{eq67}
		u_\lambda\in K_{\psi_\lambda}\ \mbox{and}\ m^{\lambda}_{\rho}\leq\psi_\lambda(u_\lambda).
	\end{equation}
	
It follows	from (\ref{eq67})  that $u_\lambda\neq 0$ (see (\ref{eq44})) and
	\begin{eqnarray}\label{eq68}
		&&\left\langle A(u_\lambda),h\right\rangle+\int_{\Omega}\xi(z)|u_\lambda|^{p-2}u_\lambda hdz+\int_{\partial\Omega}\beta(z)|u_\lambda|^{p-2}u_\lambda hd\sigma\nonumber\\
		&&-\eta\int_{\Omega}(u^-_{\lambda})^{p-1}hdz=\lambda\int_{\Omega}(u^+_{\lambda})^{p-1}hdz+\int_{\Omega}f(z,u_\lambda^+)hdz\ \mbox{for all}\ h\in W^{1,p}(\Omega).
	\end{eqnarray}
	
	In (\ref{eq68}) we choose $h=-u_{\lambda}^-\in W^{1,p}(\Omega)$. Then
	\begin{eqnarray*}
		&&\frac{c_1}{p-1}||Du^-_{\lambda}||^p_p+\int_{\Omega}[\xi(z)+\eta]|u_\lambda^-|^pdz\leq 0\ (\mbox{see Lemma \ref{lem2}}),\\
		&\Rightarrow&u_\lambda\geq 0,\ u_\lambda\neq 0.
	\end{eqnarray*}
	
	It follows from (\ref{eq68}) that $u_\lambda$ is a positive solution of \eqref{eqp}, hence $\lambda\in\mathcal{L}$ and so $\mathcal{L}\neq\emptyset$. Moreover, from the nonlinear regularity theory (see \cite{12}) and the nonlinear maximum principle (see \cite{24}), we can deduce that $S_\lambda\subseteq D_+$.
\end{proof}

In the present setting, on account of hypotheses $H(f)_2(i),(iv)$, we have that
\begin{equation}\label{eq69}
	\lambda x^{p-1}+f(z,x)\geq \tilde{c}_0x^{s-1}-c_{21}x^{r-1}\ \mbox{for almost all}\ z\in\Omega,\ \mbox{and for all}\ x\geq 0,
\end{equation}
for some big enough $c_{21}=c_{21}(\lambda)>0$. An inspection of the proofs of Propositions \ref{prop9}$-$\ref{prop14} reveals that their conclusions remain valid in the present setting. Now, instead of (\ref{eq19}) we use (\ref{eq69}). So, we can state the following proposition summarizing these conclusions.
\begin{prop}\label{prop24}
	If hypotheses $H(a),H(\xi),H(\beta),H(f)_2$ hold, then
	\begin{itemize}
		\item[(a)] if $\lambda\in\mathcal{L}$ and $\vartheta<\lambda$, then $\vartheta\in\mathcal{L}$;
		\item[(b)] $\lambda^*=\sup\mathcal{L}<+\infty$;
		\item[(c)] for every $\lambda\in\mathcal{L}$, problem \eqref{eqp} admits a smallest element $\bar{u}_\lambda\in D_+$ and the map $\lambda\mapsto\bar{u}_\lambda$ from $\mathcal{L}$ into $C^1(\overline{\Omega})$ is
		\begin{itemize}
			\item[$\bullet$] strictly increasing (that is, $\vartheta<\lambda\in\mathcal{L}\Rightarrow\bar{u}_\lambda-\bar{u}_\vartheta\in {\rm int}\, C_+$);
			\item[$\bullet$] left continuous.
		\end{itemize}
	\end{itemize}
\end{prop}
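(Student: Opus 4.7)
The plan is to follow the blueprint established for Propositions \ref{prop9}--\ref{prop14} almost verbatim, with the only structural change being that the lower one-sided bound on the reaction is now provided by \eqref{eq69} (coming from $H(f)_2(i),(iv)$) instead of \eqref{eq19}. So I would first fix once and for all the concave-convex estimate $\lambda x^{p-1}+f(z,x)\geq \tilde c_0 x^{s-1}-c_{21}x^{r-1}$ valid for $x\geq 0$, with $1<s<q\leq p<r<p^{*}$, and check that every place where \eqref{eq19} was invoked in Section~3 still works after this substitution, because what was actually used there were the inequalities $s\leq p<r$ and the existence of a concave term near zero.

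For part (a), given $u_\lambda\in S_\lambda\subseteq D_+$ and $\vartheta<\lambda$, I would mimic Proposition \ref{prop9}: truncate $f$ at $u_\lambda$ and perturb by $\eta>\|\xi\|_\infty$ exactly as in \eqref{eq13} to define a coercive, sequentially weakly lower semicontinuous $C^1$-functional $\hat\varphi_\vartheta$; its global minimizer $u_\vartheta$ is nontrivial because $\hat\varphi_\vartheta(t\hat u_1)<0$ for small $t$ (here I exploit $H(f)_2(iv)$ in the form $f(z,x)\geq \tilde c_0 x^{s-1}$ near $0^+$ with $s<q$, together with $H(a)(iv)$, following the $t\hat u_1$ test function argument from Proposition \ref{prop8}); the truncation ensures $0\leq u_\vartheta\leq u_\lambda$ and hence $u_\vartheta\in S_\vartheta\subseteq D_+$, giving $\vartheta\in\mathcal L$. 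For part (b), I would run the argument of Proposition \ref{prop10} unchanged: by $H(f)_2$ (which in particular yields $f(z,x)\geq 0$ near $+\infty$, via \eqref{eq39}), there exists $\tilde\lambda$ so that $(\tilde\lambda-\xi(z))x^{p-1}+f(z,x)\geq x^{p-1}$ for all $x\geq 0$; then if $\lambda>\tilde\lambda$ were in $\mathcal L$ with $u\in S_\lambda\subseteq D_+$, taking $m=\min_{\overline\Omega}u>0$ and applying the strong comparison principle (Proposition \ref{prop5}) to $m_\delta=m+\delta$ versus $u$ would force $u-m_\delta\in{\rm int}\,C_+$ for small $\delta$, contradicting the definition of $m$. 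Hence $\lambda^*\leq \tilde\lambda<+\infty$.

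For part (c), the minimal solution is produced in three steps. First, I replace the auxiliary problem \eqref{eq20} by
\begin{equation*}
-{\rm div}\,a(Du)+|\xi(z)|u^{p-1}=\tilde c_0 u^{s-1}-c_{21}u^{r-1}\ \text{in}\ \Omega,\quad \tfrac{\partial u}{\partial n_a}+\beta(z)u^{p-1}=0\ \text{on}\ \partial\Omega,
\end{equation*}
and establish existence and uniqueness of a positive solution $u_*^\lambda\in D_+$ exactly as in Proposition \ref{prop11}: the associated functional is coercive (since $r>p$), is negative at $t\hat u_1$ for small $t$ (using $s<q\leq p$ and the concavity condition on $G_0$ from $H(a)(iv)$), and the Díaz--Saa type convexity argument for the functional $l$ goes through because $s\leq q\leq p<r$, so the map $x^q-y^q\mapsto c_{21}(x^{r-q}-y^{r-q})$ remains monotone. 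Second, mirroring Proposition \ref{prop12}, for $\lambda\in\mathcal L$ and $u\in S_\lambda$ I truncate between $0$ and $u$ the reaction $\tilde c_0 x^{s-1}-c_{21}x^{r-1}+\eta x^{p-1}$, minimize the resulting perturbed functional, and verify the minimizer equals $u_*^\lambda$, proving $u_*^\lambda\leq u$. Third, since $S_\lambda$ is downward directed (same argument as in \cite{21}), Lemma~3.10 of \cite{11} produces a decreasing sequence $\{u_n\}\subseteq S_\lambda$ with $\inf_n u_n=\inf S_\lambda$; boundedness in $W^{1,p}(\Omega)$ follows from $u_*^\lambda\leq u_n\leq u_1$, the $(S)_+$ property of $A$ (Proposition \ref{prop4}) gives strong convergence along a subsequence, and the limit $\bar u_\lambda\in S_\lambda\cap D_+$ is the minimum. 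Finally, strict monotonicity and left continuity of $\lambda\mapsto \bar u_\lambda$ are obtained exactly as in Proposition \ref{prop14}: strict monotonicity from the strong comparison principle Proposition \ref{prop5} applied with $m_\vartheta=\min_{\overline\Omega}\bar u_\vartheta>0$, and left continuity via the Lieberman $C^{1,\tau}$ bound together with the compact embedding $C^{1,\tau}(\overline\Omega)\hookrightarrow C^1(\overline\Omega)$, where the identification $u_\lambda=\bar u_\lambda$ of the limit uses the already-established strict monotonicity.

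The main obstacle I expect is conceptual rather than computational: ensuring that the minimizer $\tilde u^\lambda_*$ of the truncated functional in step two is genuinely nontrivial, because the superlinear term is truncated out and what remains near $0^+$ is the concave lower bound $\tilde c_0 x^{s-1}$ with $s<q$; this is precisely where the refined lower bound in $H(f)_2(iv)$ (stronger than the mere $H(f)_1(iii)$) is essential, and one must verify that choosing $t\hat u_1$ small still produces negative functional values uniformly in $\lambda$ bounded below. Beyond that, the proof is a systematic transcription of the sublinear arguments with the substitution \eqref{eq19}$\rightsquigarrow$\eqref{eq69}.
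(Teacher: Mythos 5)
Your proposal is correct and follows exactly the route the paper takes: the paper's entire justification of Proposition \ref{prop24} is the remark preceding it, namely that the proofs of Propositions \ref{prop9}--\ref{prop14} go through verbatim once \eqref{eq19} is replaced by \eqref{eq69}, and your text is a faithful (and more detailed) expansion of that inspection, including the correct identification of where $H(f)_2(iv)$ substitutes for $H(f)_1(iii)$ near the origin and why the D\'iaz--Saa monotonicity step still works with $s<q\leq p<r$.
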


Again we show that the critical parameter $\lambda^*$ is not admissible, hence
$$\mathcal{L}=(-\infty,\lambda^*).$$
\begin{prop}\label{prop25}
	If hypotheses $H(a),H(\xi),H(\beta),H(f)_2$ hold, then $\lambda^*\notin\mathcal{L}$.
\end{prop}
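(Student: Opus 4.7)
\begin{proof-sketch}
I would argue by contradiction, closely following the strategy of Proposition~\ref{prop15}. Assume $\lambda^*\in\mathcal{L}$; then Proposition~\ref{prop24}(c) furnishes a minimal positive solution $\bar{u}_*=\bar{u}_{\lambda^*}\in D_+$. Fix $\lambda>\lambda^*$ and $\eta>\|\xi\|_\infty$, and introduce the truncated Carath\'eodory function
\[
\gamma_\lambda(z,x)=\begin{cases}(\lambda+\eta)\bar{u}_*(z)^{p-1}+f(z,\bar{u}_*(z)),&x\leq\bar{u}_*(z),\\[2pt](\lambda+\eta)x^{p-1}+f(z,x),&x>\bar{u}_*(z),\end{cases}
\]
its primitive $\Gamma_\lambda(z,x)=\int_0^x\gamma_\lambda(z,s)\,ds$, and the associated $C^1$-functional $\tilde{\varphi}_\lambda$ of the same form as in Proposition~\ref{prop15}. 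The game plan is to produce a critical point $u_\lambda$ of $\tilde{\varphi}_\lambda$: once obtained, the test-function calculation $h=(\bar{u}_*-u_\lambda)^+$ used at the end of the proof of Proposition~\ref{prop15} (together with the fact that, since $\lambda>\lambda^*$, $\bar{u}_*$ is a strict subsolution at level $\lambda$) forces $u_\lambda\geq\bar{u}_*$; consequently $\gamma_\lambda(z,u_\lambda)=(\lambda+\eta)u_\lambda^{p-1}+f(z,u_\lambda)$ and $u_\lambda\in S_\lambda$, which contradicts $\lambda>\lambda^*=\sup\mathcal{L}$.

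In the sublinear setting of Proposition~\ref{prop15} the existence of $u_\lambda$ was free, since $\tilde{\varphi}_\lambda$ was coercive. Here hypothesis $H(f)_2(ii)$ makes $\tilde{\varphi}_\lambda(\bar{u}_*+t\hat{e})\to-\infty$ for every $\hat{e}\in D_+$, so the direct method is no longer available. I would instead apply the mountain pass theorem (Theorem~\ref{th1}). The C-condition for $\tilde{\varphi}_\lambda$ follows by reproducing the Claim inside the proof of Proposition~\ref{prop23}: the truncation $\gamma_\lambda$ only alters the original reaction on the bounded set $\{x\leq\bar{u}_*(z)\}$, so the ingredients $H(f)_2(ii)$ and $H(f)_2(iii)$ that drive the Cerami-sequence analysis remain in force above the threshold.

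For the mountain pass geometry I would use the $s$-concave bound $\tilde{c}_0x^{s-1}\leq f(z,x)$ from $H(f)_2(iv)$ (with $s\in(\tau,q)$ and $q\leq p$). A computation parallel to (\ref{eq8})--(\ref{eq10}) in the proof of Proposition~\ref{prop8}, carried out with the small test function $t_*\hat{u}_1$ where $\hat{u}_1=\hat{u}_1(q,\xi_0,\beta_0)\in D_+$, produces a point $u_0=t_*\hat{u}_1$ with $\tilde{\varphi}_\lambda(u_0)<0$. I would then upgrade $u_0$ to a local $W^{1,p}(\Omega)$-minimizer of $\tilde{\varphi}_\lambda$: on the order interval $[0,\bar{u}_*]$ the truncation is $x$-independent, so the restricted energy is convex and the strong comparison principle (Proposition~\ref{prop5}) combined with the $C^1$- to $W^{1,p}$-upgrade of Proposition~\ref{prop6} yield the separating sphere $\inf\{\tilde{\varphi}_\lambda(u):\|u-u_0\|=\rho\}>\tilde{\varphi}_\lambda(u_0)$. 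On the other side, superlinearity gives $u_1=\bar{u}_*+t_1\mathbf{1}_\Omega$ at distance greater than $\rho$ from $u_0$ with $\tilde{\varphi}_\lambda(u_1)<\tilde{\varphi}_\lambda(u_0)$. Theorem~\ref{th1} then produces the required critical point $u_\lambda$, and the ordering argument of the first paragraph closes the proof.

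The main obstacle I anticipate is the upgrade of $u_0$ to a bona fide local $W^{1,p}$-minimizer with a quantitative mountain. Because $\bar{u}_*$ itself is \emph{not} a critical point of $\tilde{\varphi}_\lambda$ --- direct computation gives $\langle\tilde{\varphi}'_\lambda(\bar{u}_*),h\rangle=(\lambda^*-\lambda)\int_\Omega\bar{u}_*^{p-1}h\,dz\neq0$ --- one cannot use $\bar{u}_*$ as the valley vertex, and the local-minimizer argument for $u_0$ must carefully exploit the piecewise nature of $\gamma_\lambda$ near the threshold $\bar{u}_*$. If this step proved too brittle I would fall back on a secondary truncation from above at a large level $M$ (which renders $\tilde{\varphi}_\lambda^M$ coercive), minimize directly, and pass to the limit $M\to+\infty$ using the a priori $L^\infty$-bound supplied by Proposition~7 of Papageorgiou--R\u adulescu~\cite{20} and the regularity theory of Lieberman~\cite{12}. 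Either route yields $u_\lambda\in K_{\tilde{\varphi}_\lambda}$ and hence the contradiction.
\end{proof-sketch}
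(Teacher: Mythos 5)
Your skeleton coincides with the paper's: argue by contradiction, truncate the reaction from below at the minimal solution $\bar{u}_*=\bar{u}_{\lambda^*}\in D_+$ exactly as in \eqref{eq70}, verify the C-condition by repeating the Claim from Proposition \ref{prop23}, note that every critical point of the truncated functional lies in $\left[\bar{u}_*\right)$ (your test with $h=(\bar{u}_*-u_\lambda)^+$ is precisely the paper's \eqref{eq72}), and run the mountain pass theorem to land in $S_\lambda$, contradicting $\lambda>\lambda^*$. The gap is in the one genuinely delicate step, the mountain pass geometry. First, hypothesis $H(f)_2(iv)$ concerns $f(z,\cdot)$ near $0$, but the truncated functional never sees $f(z,x)$ for $x\leq\bar{u}_*(z)$: below the threshold the primitive $\hat{\Gamma}_\lambda(z,\cdot)$ is \emph{linear}, so the computation of Proposition \ref{prop8} (relations \eqref{eq8}--\eqref{eq10}) does not transfer to $\tilde{\psi}_\lambda$. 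Second, and more seriously, exhibiting a point $u_0=t_*\hat{u}_1$ with $\tilde{\psi}_\lambda(u_0)<0$ is not mountain pass geometry; you need a strict local minimizer or a ring estimate $\inf\{\tilde{\psi}_\lambda(u):\|u-u_0\|=\rho\}>\tilde{\psi}_\lambda(u_0)$, and your proposed upgrade ``via convexity on $[0,\bar{u}_*]$ plus Propositions \ref{prop5} and \ref{prop6}'' does not deliver it: convexity of the restriction of $\tilde{\psi}_\lambda$ to the order interval says nothing about its values on a full $W^{1,p}(\Omega)$-sphere around $u_0$, which contains functions exceeding $\bar{u}_*$ on sets of positive measure, where the superlinear part of the reaction is active. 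Your fallback (truncate from above at level $M$, minimize, let $M\to+\infty$) also fails: by $H(f)_2(ii)$ the infima tend to $-\infty$ and there is no a priori $W^{1,p}(\Omega)$-bound on the minimizers, so there is nothing to pass to the limit with (the $L^\infty$-estimate of \cite{20} presupposes such a bound).

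The paper's own device at this step is to anchor the valley at $u=0$: since $\bar{m}_*=\min_{\overline\Omega}\bar{u}_*>0$, every $u\in C^1(\overline\Omega)$ with $\|u\|_{C^1(\overline\Omega)}\leq\rho_0<\bar{m}_*$ lies entirely in the frozen region $\{x\leq\bar{u}_*(z)\}$; from this the paper concludes that $0$ is a local $C^1(\overline\Omega)$-minimizer of $\tilde{\psi}_\lambda$ (see \eqref{eq71}), upgrades it to a local $W^{1,p}(\Omega)$-minimizer via Proposition \ref{prop6}, and then, assuming $K_{\tilde{\psi}_\lambda}$ finite (otherwise one is done by \eqref{eq72}), obtains the strict ring estimate \eqref{eq73} from \cite{1}. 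Your side computation $\langle\tilde{\psi}'_\lambda(\bar{u}_*),h\rangle=(\lambda^*-\lambda)\int_\Omega\bar{u}_*^{p-1}h\,dz$ is correct and identifies a real subtlety --- $\bar{u}_*$ is only a strict subsolution of the truncated problem, not a critical point, which is exactly why the geometry must be built around $0$ rather than around $\bar{u}_*$ or around an ad hoc point of negative energy --- but the replacement you propose for this step does not close the argument.
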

\begin{proof}
	Arguing by contradiction, suppose that $\lambda^*\in\mathcal{L}$. Then according to Proposition \ref{prop24} problem \eqref{eqp} admits a smallest positive solution $\bar{u}_*=\bar{u}_{\lambda^*}\in D_+$.

Consider $\lambda>\lambda^*$ and, as always, let $\eta>||\xi||_{\infty}$. We introduce the Carath\'eodory function $\hat{\gamma}_\lambda(z,x)$ define by
	\begin{eqnarray}\label{eq70}
		\hat{\gamma}_\lambda(z,x)=\left\{\begin{array}{lll}
			&(\lambda+\eta)\bar{u}_*(z)^{p-1}+f(z,\bar{u}_*(z))&\mbox{if}\ x\leq\bar{u}_*(z)\\
			&(\lambda+\eta)x^{p-1}+f(z,x)&\mbox{if}\ \bar{u}_*(z)<x.
		\end{array}\right.
	\end{eqnarray}
	
	We set $\hat{\Gamma}_\lambda(z,x)=\int^x_0\hat{\gamma}_\lambda(z,s)ds$ and consider the $C^1$-functional $\tilde{\psi}_\lambda:W^{1,p}(\Omega)\rightarrow\RR$ defined by
	$$\tilde{\psi}_\lambda(u)=\frac{1}{p}\mu(u)+\frac{\eta}{p}||u|^p_p-\int_{\Omega}\Gamma_\lambda(z,u)dz\ \mbox{for all}\ u\in W^{1,p}(\Omega).$$
	
	Recall that $\bar{u}_*\in D_+$ hence $\bar{m}_*=\min\limits_{\overline{\Omega}}\bar{u}_*>0$. Let $\rho_0\in(0,\bar{m}_*)$. Then for $u\in C^1(\overline{\Omega})$ with $||u||_{C^1(\overline{\Omega})}\leq\rho_0$, we have
	\begin{eqnarray}\label{eq71}
		&&\tilde{\psi}_\lambda(u)=\tilde{\psi}_\lambda(0)\ (\mbox{see (\ref{eq70})})\nonumber\\
		&\Rightarrow&u=0\ \mbox{is a local}\ C^1(\overline{\Omega})\mbox{-minimizer of}\ \tilde{\psi}_\lambda,\nonumber\\
		&\Rightarrow&u=0\ \mbox{is a local}\ W^{1,p}(\Omega)\mbox{-minimizer of}\ \tilde{\psi}_\lambda\ \mbox{(see Proposition \ref{prop6})}.
	\end{eqnarray}
	
	Using (\ref{eq70}) we can easily verify that
	\begin{equation}\label{eq72}
		K_{\tilde{\psi}_{\lambda}}\subseteq\left[\bar{u}_*\right)\cap C^1(\overline{\Omega}).
	\end{equation}
	
	Without any loss of generality, we assume that $K_{\psi_\lambda}$ is finite (see (\ref{eq70}), (\ref{eq72})). Then on account of (\ref{eq71}) we can find small enough $\rho_1\in(0,1)$  such that
	\begin{equation}\label{eq73}		\inf\{\tilde{\psi}_\lambda(u):||u||=\rho_1\}=\tilde{m}_1>\tilde{\psi}_\lambda(0)=\tilde{\psi}_\lambda(\bar{u}_*)\ (\mbox{see \cite{1}}).
	\end{equation}
	
	Also, hypothesis $H(f)_2(ii)$ implies that if $u\in D_+$, then
	\begin{equation}\label{eq74}
		\tilde{\psi}_\lambda(tu)\rightarrow-\infty\ \mbox{as}\ t\rightarrow+\infty.
	\end{equation}
	
	Moreover, on account of (\ref{eq70}), reasoning as in the proof of Proposition \ref{prop23} (see the claim), we can show that
	\begin{equation}\label{eq75}
		\tilde{\psi}_\lambda(\cdot)\ \mbox{satisfies the C-condition}.
	\end{equation}
	
	Then (\ref{eq73}), (\ref{eq74}), (\ref{eq75}) permit the use of Theorem \ref{th1} (the mountain pass theorem). Hence we can find $u_\lambda\in W^{1,p}(\Omega)$ such that
	$$u_\lambda\in K_{\tilde{\psi}_\lambda}\subseteq\left[\bar{u}_*\right)\cap C^1(\overline{\Omega})\ (\mbox{see (\ref{eq72})}),\ \tilde{\psi}_\lambda(u_\lambda)\geq\tilde{m}_1.$$
	
	It follows that $u_\lambda\in S_\lambda$ and so $\lambda\in\mathcal{L}$, a contradiction. Therefore $\lambda^*\not\in\mathcal{L}$. The proof is now complete.
\end{proof}

{\rm In this case for $\lambda\in\mathcal{L}=(-\infty,\lambda^*)$ we have a multiplicity result for problem \eqref{eqp}. }

\begin{prop}\label{prop26}
	If hypotheses $H(a),H(\xi),H(\beta),H(f)_2$ hold and $\lambda\in\mathcal{L}=(-\infty,\lambda^*)$, then problem \eqref{eqp} admits at least two positive solutions
	$$u_\lambda,\ \hat{u}_\lambda\in D_+,\ u_\lambda\leq\hat{u}_\lambda,\ u_\lambda\neq\hat{u}_\lambda.$$
\end{prop}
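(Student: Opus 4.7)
The plan is to take the minimal positive solution $u_\lambda=\bar{u}_\lambda\in D_+$ supplied by Proposition \ref{prop24}(c) as the first solution, and to produce a second solution $\hat{u}_\lambda\geq\bar{u}_\lambda$ with $\hat{u}_\lambda\neq\bar{u}_\lambda$ by applying the mountain pass theorem to a suitably truncated $C^1$-functional for which $\bar{u}_\lambda$ is a local $W^{1,p}(\Omega)$-minimizer. The scheme mirrors the template used in the proof of Proposition \ref{prop25} but is now carried out at $\lambda$ rather than at $\lambda^*$.

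Fix $\eta>\|\xi\|_{\infty}$ and, in analogy with (\ref{eq70}), introduce the Carath\'eodory truncation
$$
\hat{\gamma}_{\lambda}(z,x)=\begin{cases}(\lambda+\eta)\bar{u}_{\lambda}(z)^{p-1}+f(z,\bar{u}_{\lambda}(z))&\text{if } x\leq\bar{u}_{\lambda}(z),\\(\lambda+\eta)x^{p-1}+f(z,x)&\text{if } \bar{u}_{\lambda}(z)<x,\end{cases}
$$
set $\hat{\Gamma}_{\lambda}(z,x)=\int_{0}^{x}\hat{\gamma}_{\lambda}(z,s)\,ds$, and consider the $C^1$-functional
$$
\tilde{\psi}_{\lambda}(u)=\frac{1}{p}\mu(u)+\frac{\eta}{p}\|u\|_{p}^{p}-\int_{\Omega}\hat{\Gamma}_{\lambda}(z,u)\,dz.
$$
Since $\bar{u}_{\lambda}\in S_{\lambda}$, both branches of $\hat{\gamma}_{\lambda}$ agree at $x=\bar{u}_{\lambda}$ and one checks $\bar{u}_{\lambda}\in K_{\tilde{\psi}_{\lambda}}$. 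For any other $y\in K_{\tilde{\psi}_{\lambda}}$ I would test $\tilde{\psi}_{\lambda}'(y)=0$ with $h=(\bar{u}_{\lambda}-y)^{+}\in W^{1,p}(\Omega)$; subtracting the identity satisfied by $\bar{u}_{\lambda}$ and invoking the monotonicity of $A(\cdot)$ (Proposition \ref{prop4}) forces $y\geq\bar{u}_{\lambda}$, which together with (\ref{eq70})-type identification and the nonlinear regularity theory yields $y\in S_{\lambda}\subseteq D_{+}$.

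The mountain pass geometry around $\bar{u}_{\lambda}$ rests on three ingredients. First, following the pattern of (\ref{eq71}), I would argue that $\bar{u}_{\lambda}$ is a local $C^{1}(\overline{\Omega})$-minimizer of $\tilde{\psi}_{\lambda}$ and therefore, via Proposition \ref{prop6}, a local $W^{1,p}(\Omega)$-minimizer; assuming (as is standard) that $K_{\tilde{\psi}_{\lambda}}$ is finite, one upgrades this to the strict-minimum form $\inf\{\tilde{\psi}_{\lambda}(u):\|u-\bar{u}_{\lambda}\|=\rho_{1}\}=\tilde{m}_{1}>\tilde{\psi}_{\lambda}(\bar{u}_{\lambda})$ for some $\rho_{1}\in(0,1)$, as in \cite{1}. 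Second, hypothesis $H(f)_2(ii)$ yields $\tilde{\psi}_{\lambda}(\bar{u}_{\lambda}+tv)\to-\infty$ as $t\to+\infty$ for any $v\in D_{+}$, so one can fix $t_{0}>0$ with $\|\bar{u}_{\lambda}+t_{0}v-\bar{u}_{\lambda}\|>\rho_{1}$ and $\tilde{\psi}_{\lambda}(\bar{u}_{\lambda}+t_{0}v)<\tilde{\psi}_{\lambda}(\bar{u}_{\lambda})$. Third, $\tilde{\psi}_{\lambda}$ satisfies the C-condition: because above $\bar{u}_{\lambda}$ the reaction in $\tilde{\psi}_{\lambda}$ coincides with $(\lambda+\eta)x^{p-1}+f(z,x)$ and below $\bar{u}_{\lambda}$ it is uniformly bounded, the verification reproduces almost word-for-word the Claim inside the proof of Proposition \ref{prop23}, with the quasi-monotonicity condition $H(f)_2(iii)$ on $d(z,\cdot)$ taking the place of the Ambrosetti-Rabinowitz condition.

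With these three pieces Theorem \ref{th1} produces $\hat{u}_{\lambda}\in K_{\tilde{\psi}_{\lambda}}$ with $\tilde{\psi}_{\lambda}(\hat{u}_{\lambda})\geq\tilde{m}_{1}>\tilde{\psi}_{\lambda}(\bar{u}_{\lambda})$, hence $\hat{u}_{\lambda}\neq\bar{u}_{\lambda}$; by the comparison performed above, $\hat{u}_{\lambda}\geq\bar{u}_{\lambda}$ and $\hat{u}_{\lambda}\in S_{\lambda}\subseteq D_{+}$. The main obstacle is the local-minimizer step: the $C^{1}(\overline{\Omega})$-minimality of $\bar{u}_{\lambda}$ is delicate because on the set $\{u<\bar{u}_{\lambda}\}$ the truncation renders $\hat{\Gamma}_{\lambda}(z,\cdot)$ linear in $x$, so the sign of $\tilde{\psi}_{\lambda}(u)-\tilde{\psi}_{\lambda}(\bar{u}_{\lambda})$ must be controlled separately on $\{u\geq\bar{u}_{\lambda}\}$ and $\{u<\bar{u}_{\lambda}\}$, using the strict positivity $\min_{\overline{\Omega}}\bar{u}_{\lambda}>0$, the strong convexity of $G(\cdot)$ (Corollary \ref{cor3}), and the monotonicity encoded in hypothesis $H(f)_{2}(v)$. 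Once that is in place, the remaining arguments reproduce the patterns already established in Propositions \ref{prop23} and \ref{prop25}.
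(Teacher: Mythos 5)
Your scheme coincides with the paper's: truncate the reaction from below at the minimal solution $\bar{u}_{\lambda}$ (your $\hat{\gamma}_{\lambda}$ is exactly the paper's $k_{\lambda}$ in (\ref{eq76})), show $K_{\tilde{\psi}_{\lambda}}\subseteq\left[\bar{u}_{\lambda}\right)\cap C^{1}(\overline{\Omega})$ by testing with $(\bar{u}_{\lambda}-y)^{+}$, and apply the mountain pass theorem to obtain a second critical point above $\bar{u}_{\lambda}$ at a strictly higher level. The unboundedness along rays from $H(f)_2(ii)$ and the verification of the C-condition via $H(f)_2(iii)$, copied from the Claim in Proposition \ref{prop23}, are sound and match the paper.

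The genuine gap is the step you yourself single out and then leave open: the proof that $\bar{u}_{\lambda}$ is a local $C^{1}(\overline{\Omega})$-minimizer of $\tilde{\psi}_{\lambda}$. The tools you invoke do not close it. Corollary \ref{cor3} gives only two-sided growth bounds for $G$, not strong convexity, and hypothesis $H(f)_{2}(v)$ only says that $x\mapsto f(z,x)+\hat{\xi}_{\rho}x^{p-1}$ is nondecreasing, which yields no comparison of energies; on $\left\{u<\bar{u}_{\lambda}\right\}$ the truncated primitive is the affine function $x\mapsto g(z)x$ with $g(z)=(\lambda+\eta)\bar{u}_{\lambda}(z)^{p-1}+f(z,\bar{u}_{\lambda}(z))$, the first-order term in $\tilde{\psi}_{\lambda}(u)-\tilde{\psi}_{\lambda}(\bar{u}_{\lambda})$ does not vanish pointwise, and no quadratic expansion is available for $p\neq2$. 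What actually closes this step — and is the point where the hypothesis $\lambda<\lambda^{*}$ is genuinely used — is an upper barrier: pick $\mu\in(\lambda,\lambda^{*})$, so that $\bar{u}_{\mu}\in S_{\mu}$ is a supersolution of \eqref{eqp} at the parameter $\lambda$ and $\bar{u}_{\mu}-\bar{u}_{\lambda}\in{\rm int}\,C_{+}$ by Proposition \ref{prop24}(c). Minimizing the functional doubly truncated to $[\bar{u}_{\lambda},\bar{u}_{\mu}]$ either produces a second solution outright or shows that $\bar{u}_{\lambda}$ minimizes $\tilde{\psi}_{\lambda}$ over $\left\{u\geq\bar{u}_{\lambda}\right\}$ in a full $C^{1}$-neighbourhood (which $[\bar{u}_{\lambda},\bar{u}_{\mu}]$ contains, since $\bar{u}_{\mu}-\bar{u}_{\lambda}\in{\rm int}\,C_{+}$); the part $\left\{u<\bar{u}_{\lambda}\right\}$ is then handled by the convexity of $G$ and of $t\mapsto|t|^{p}$ together with the weak equation satisfied by $\bar{u}_{\lambda}$, after which Proposition \ref{prop6} upgrades to a local $W^{1,p}(\Omega)$-minimizer. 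Finally, be careful about deferring to "the pattern of Proposition \ref{prop25}": the paper's own justification there, relation (\ref{eq71}) asserting $\tilde{\psi}_{\lambda}(u)=\tilde{\psi}_{\lambda}(0)$ on a small $C^{1}$-ball, is not correct as written (the truncated primitive contributes a nonvanishing linear term), so reproducing that pattern is not a substitute for the argument above.
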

\begin{proof}
	Since $\lambda\in\mathcal{L}$ we can find $u_\lambda\in S_\lambda\subseteq D_+$ (see Proposition \ref{prop23}). We may assume that $u_\lambda$ is the minimal positive solution of \eqref{eqp} produced in Proposition \ref{prop24} (that is, $u_\lambda=\bar{u}_\lambda$). With $\eta>||\xi||_{\infty}$, we introduce the Carath\'eodory function $k_\lambda(z,x)$ defined by
	\begin{eqnarray}\label{eq76}
		k_\lambda(z,x)=\left\{\begin{array}{lll}
			&(\lambda+\eta)u_\lambda(z)^{p-1}+f(z,u_\lambda(z))&\mbox{if}\ x\leq u_\lambda(z)\\
			&(\lambda+\eta)x^{p-1}+f(z,x)&\mbox{if}\ u_\lambda(z)<x.
		\end{array}\right.
	\end{eqnarray}
	
	Let $K_\lambda(z,x)=\int^x_0k_\lambda(z,s)ds$ and consider the $C^1$-functional $j_\lambda:W^{1,p}(\Omega)\rightarrow\RR$ defined by
	$$j_\lambda(u)=\frac{1}{p}\mu(u)+\frac{\eta}{p}||u||^p_p-\int_{\Omega}K_\lambda(z,u)dz.$$
	
	Working with $j_\lambda(\cdot)$ as in the proof of Proposition \ref{prop25} and using (\ref{eq76}), we produce $\hat{u}_\lambda\in W^{1,p}(\Omega)$ such that
	\begin{equation}\label{eq77}
		\hat{u}_\lambda\in K_{j_\lambda}\subseteq\left[u_\lambda\right)\cap C^1(\overline{\Omega}),\ \hat{u}_\lambda\notin\{0,u_\lambda\}.
	\end{equation}
	
It follows	from (\ref{eq76}) and (\ref{eq77}) that $\hat{u}_\lambda\in D_+$ is the second positive solution of \eqref{eqp}.
\end{proof}

{\rm Summarizing the situation for the ``superlinear'' case, we can state the following result.}
\begin{theorem}\label{th27}
	If hypotheses $H(a),H(\xi),H(\beta),H(f)_2$ hold, then there exists $\lambda^*<+\infty$ such that
	\begin{itemize}
		\item[(a)] for every $\lambda\geq\lambda^*$, problem \eqref{eqp} has no positive solutions;
		\item[(b)] for every $\lambda<\lambda^*$, problem \eqref{eqp} has at least two positive solutions $u_\lambda$, $\hat{u}_\lambda\in D_+$, $u_\lambda\leq\hat{u}_\lambda$, $u_\lambda\neq\hat{u}_\lambda$;
		\item[(c)] for every $\lambda<\lambda^*$, problem \eqref{eqp} has a smallest positive solution $\bar{u}_\lambda\in D_+$ and the map $\lambda\mapsto\bar{u}_\lambda$ from $\mathcal{L}=(-\infty,\hat{\lambda}_1)$ into $C^1(\overline{\Omega})$ is
		\begin{itemize}
			\item[$\bullet$] strictly increasing (that is, $\vartheta<\lambda\in\mathcal{L}\Rightarrow\bar{u}_\lambda-\bar{u}_\vartheta\in int C_+$);
			\item[$\bullet$] left continuous.
		\end{itemize}
	\end{itemize}
\end{theorem}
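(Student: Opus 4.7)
The statement is a summary theorem that consolidates the results already established in Propositions \ref{prop23}--\ref{prop26}, so my plan is mainly to assemble those building blocks in the correct order and check that together they cover every clause of Theorem \ref{th27}. Before writing anything, I would verify that the finiteness of $\lambda^*=\sup\mathcal{L}$ has genuinely been obtained in the superlinear setting; this is asserted in Proposition \ref{prop24}(b), which is justified by the remark that the argument of Proposition \ref{prop10} transfers verbatim once the lower bound \eqref{eq69} replaces \eqref{eq19}. With $\lambda^*<+\infty$ in hand, part (a) follows immediately: by Proposition \ref{prop24}(a), $\mathcal{L}$ is a left half-line, and by Proposition \ref{prop25}, the endpoint $\lambda^*$ is not admissible, so $\mathcal{L}=(-\infty,\lambda^*)$ and in particular $S_\lambda=\emptyset$ for every $\lambda\geq\lambda^*$.

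For part (b) I would invoke Proposition \ref{prop26}: given any $\lambda<\lambda^*$, one takes the minimal positive solution $\bar u_\lambda=u_\lambda\in D_+$ furnished by Proposition \ref{prop24}(c) (combined with Proposition \ref{prop23}, which places it in $D_+$), truncates the reaction at $u_\lambda$ via $k_\lambda(z,x)$ as in \eqref{eq76}, applies the mountain pass theorem to $j_\lambda$ after verifying the $C$-condition and the appropriate geometry exactly as in Proposition \ref{prop25}, and obtains a second critical point $\hat u_\lambda\in[u_\lambda)\cap C^1(\overline\Omega)\setminus\{u_\lambda\}$. Nonlinear regularity and the strong maximum principle of Pucci--Serrin then lift $\hat u_\lambda$ into $D_+$, and the ordering $u_\lambda\leq\hat u_\lambda$ with $u_\lambda\neq\hat u_\lambda$ is built into the construction.

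Part (c) is a direct restatement of Proposition \ref{prop24}(c): existence of the smallest element $\bar u_\lambda\in D_+$ of $S_\lambda$ is obtained by the downward-directedness argument (as in Proposition \ref{prop13}), strict monotonicity follows from the strong comparison Proposition \ref{prop5} applied exactly as in Proposition \ref{prop14}(a) (using hypothesis $H(f)_2(v)$ in place of $H(f)_1(iv)$), and left continuity in the $C^1(\overline\Omega)$-topology follows from the a priori $L^\infty$ bound of Papageorgiou--R\u adulescu \cite{20}, the $C^{1,\tau}$ regularity of Lieberman \cite{12}, and the compact embedding $C^{1,\tau}(\overline\Omega)\hookrightarrow C^1(\overline\Omega)$, exactly as in Proposition \ref{prop14}(b).

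The only genuinely nontrivial step that one might need to double-check while assembling the proof is the verification that the $C$-condition for $\tilde\psi_\lambda$ (and hence for $j_\lambda$) really does follow from the claim inside the proof of Proposition \ref{prop23}; the truncation at $\bar u_*$ (respectively $u_\lambda$) changes the reaction only on a set where the perturbed functional coincides with a coercive modification, so the superlinear tail behavior that drives the $C$-condition argument is preserved. Once this observation is made, the remainder of the proof is purely a matter of citing Propositions \ref{prop23}--\ref{prop26} and concluding $\mathcal{L}=(-\infty,\lambda^*)$.
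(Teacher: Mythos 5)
Your proposal is correct and follows essentially the same route as the paper, which presents Theorem \ref{th27} precisely as a summary of Propositions \ref{prop23}--\ref{prop26}: part (a) from Proposition \ref{prop24}(a),(b) together with Proposition \ref{prop25} giving $\mathcal{L}=(-\infty,\lambda^*)$, part (b) from Proposition \ref{prop26}, and part (c) from Proposition \ref{prop24}(c). Your added checks (that the finiteness of $\lambda^*$ and the C-condition transfer to the superlinear setting via \eqref{eq69}) match the paper's own remarks preceding Proposition \ref{prop24}.
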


{\rm Again, in the special case of the $p$-Laplacian, see problem \eqref{eql} ($a(y)=|y|^{p-2}y$ for all $y\in\RR^N$), we can identify $\lambda^*$ as $\hat{\lambda}_1=\hat{\lambda}_1(p,\xi,\beta)$, provided that $f(z,x)>0$ for almost all $z\in\Omega$, and for all $x>0$ and we restrict the condition near zero (that is, $H(f)_2(iv)$).

So, the new conditions on the perturbation $f(z,x)$ are the following:

\smallskip
$H(f)'_2:$ $f:\Omega\times\RR\rightarrow\RR$ is a Carath\'eodory function such that for almost all $z\in\Omega$, $f(z,0)=0$, $f(z,x)>0$ for all $x>0$, hypotheses $H(f)'_2(i),(ii),(iii),(v)$ are the same as the corresponding hypotheses $H(f)'_2(i),(ii),(iii),(v)$ and
\begin{itemize}
	\item[(iv)] $\lim\limits_{x\rightarrow 0^+}\frac{f(z,x)}{x^{p-1}}=0$ uniformly for almost all $z\in\Omega$.
\end{itemize}

From Proposition \ref{prop17}, we already know that for $\lambda\geq\hat{\lambda}_1=\hat{\lambda}_1(p,\xi,\beta)$ problem \eqref{eql} has no positive solutions.}

\begin{prop}\label{prop28}
	If hypotheses $H(\xi),H(\beta),H(f)'_2$ hold and $\lambda<\hat{\lambda}_1$, then $\lambda\in\mathcal{L}$.
\end{prop}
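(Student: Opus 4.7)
The plan is to run the variational mountain-pass argument of Proposition \ref{prop23}, but replacing the crude threshold $-\|\xi\|_{\infty}$ by the sharp spectral threshold $\hat{\lambda}_1$. Fix $\eta>\|\xi\|_{\infty}$ and introduce the $C^1$-functional
$$\psi_{\lambda}(u)=\frac{1}{p}\mu_p(u)+\frac{\eta}{p}\|u^-\|_p^p-\frac{\lambda}{p}\|u^+\|_p^p-\int_{\Omega}F(z,u^+)\,dz,\quad u\in W^{1,p}(\Omega),$$
so that any nontrivial nonnegative critical point of $\psi_{\lambda}$ will, after Lieberman's regularity theorem \cite{12} and the Pucci--Serrin maximum principle \cite{24} (the latter applied via $H(f)'_2(v)$), furnish a positive solution of \eqref{eql} in $D_+$, i.e.\ $\lambda\in\mathcal{L}$.

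The local geometry near the origin is where the weakened hypothesis $H(f)'_2(iv)$ must cooperate with $\lambda<\hat{\lambda}_1$. From $H(f)'_2(i),(iv)$ one has, for every $\epsilon>0$, a constant $c_\epsilon>0$ with $F(z,x)\leq\frac{\epsilon}{p}x^p+c_\epsilon x^r$ for a.a.\ $z\in\Omega$ and all $x\geq 0$. The key new ingredient is the weighted Poincar\'e-type inequality
$$\mu_p(v)-\lambda\|v\|_p^p\geq c_*\|v\|^p\quad\mbox{for all}\ v\in W^{1,p}(\Omega),$$
for some $c_*=c_*(\lambda)>0$, which I would derive from Proposition \ref{prop7} by a standard weak-compactness contradiction: a unit-norm sequence $v_n$ violating the inequality must, up to subsequence, converge weakly to some $v$; if $v\neq 0$ then the characterization (\ref{eq4}) is contradicted by $\lambda<\hat{\lambda}_1$, while if $v=0$ one extracts $\|Dv_n\|_p\to 0$ and concludes $\|v_n\|\to 0$. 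The analogous estimate $\mu_p(v)+\eta\|v\|_p^p\geq c_{**}\|v\|^p$ holds because $\eta>\|\xi\|_{\infty}$. Applying the first inequality to $u^+$, the second to $u^-$, and bounding the $x^r$ term via Sobolev embedding, one obtains $\psi_{\lambda}(u)\geq c_{22}\|u\|^p-c_{23}\|u\|^r$, whence $m_{\rho}^{\lambda}:=\inf\{\psi_{\lambda}(u):\|u\|=\rho\}>0=\psi_{\lambda}(0)$ for all sufficiently small $\rho>0$. On the other hand, $H(f)'_2(ii)$ with Fatou's lemma gives $\psi_{\lambda}(t\hat{u}_1)\to-\infty$ as $t\to+\infty$, where $\hat{u}_1=\hat{u}_1(p,\xi,\beta)\in D_+$.

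The Cerami condition for $\psi_{\lambda}$ is verified verbatim as in the claim embedded in the proof of Proposition \ref{prop23}, since that argument relied only on $H(f)_2(ii),(iii)$, which are preserved in $H(f)'_2$. Theorem \ref{th1} then produces $u_{\lambda}\in W^{1,p}(\Omega)$ with $\psi_{\lambda}'(u_{\lambda})=0$ and $\psi_{\lambda}(u_{\lambda})\geq m_{\rho}^{\lambda}>0$, hence $u_{\lambda}\neq 0$. Testing $\psi_{\lambda}'(u_{\lambda})=0$ against $-u_{\lambda}^-\in W^{1,p}(\Omega)$ (as in the proof of Proposition \ref{prop23}) forces $u_{\lambda}^-=0$, so that $u_{\lambda}\geq 0$ solves \eqref{eql} and the regularity and maximum-principle upgrade gives $u_{\lambda}\in D_+$. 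The main obstacle is the weighted Poincar\'e inequality displayed above: it is the only place where the strict gap $\lambda<\hat{\lambda}_1$ is actually used, and its proof requires some care because the sign-indefinite potential $\xi$ and the Robin boundary term involving $\beta$ prevent $\mu_p$ from being a norm on $W^{1,p}(\Omega)$ in the usual sense.
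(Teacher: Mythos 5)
Your proposal is correct and follows essentially the same route as the paper: the same truncated mountain-pass functional, the same splitting $F(z,x)\leq\frac{\epsilon}{p}x^p+c_\epsilon x^r$ with $\epsilon\in(0,\hat{\lambda}_1-\lambda)$ for the geometry near the origin, the superlinearity $H(f)'_2(ii)$ for the behaviour along $t\hat{u}_1$, and the Cerami condition imported from the claim in Proposition \ref{prop23}. The only difference is that you make explicit (and sketch a proof of) the inequality $\mu_p(v)-\lambda\|v\|_p^p\geq c_*\|v\|^p$, which the paper uses implicitly when passing to the estimate $w_\lambda(u)\geq c_{23}\|u\|^p-c_{24}\|u\|^r$; this is a welcome clarification rather than a deviation.
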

\begin{proof}
	Let $\lambda\in(-\infty,\hat{\lambda}_1)$ and consider the Carath\'eodory function $\hat{\vartheta}_\lambda(z,x)$ defined by
	\begin{eqnarray}\label{eq78}
		\hat{\vartheta}_\lambda(z,x)=\left\{\begin{array}{lll}
			&0&\mbox{if}\ x\leq 0\\
			&\lambda x^{p-1}+f(z,x)&\mbox{if}\ 0<x.
		\end{array}\right.
	\end{eqnarray}
	
	We set $\hat{\Theta}_\lambda(z,x)=\int^x_0\hat{\vartheta}_\lambda(z,s)ds$ and with $\eta>||\xi||_{\infty}$, we consider the $C^1$-functional $w_\lambda:W^{1,p}(\Omega)\rightarrow\RR$ defined by
	$$w_\lambda(u)=\frac{1}{p}\mu(u)+\frac{\eta}{p}||u^-||^p_p-\int_{\Omega}\hat{\Theta}_\lambda(z,u)dz\ \mbox{for all}\ u\in W^{1,p}(\Omega).$$
	
	Hypotheses $H(f)'_2(i),(iv)$ imply that given $\epsilon>0$, we can find $c_{22}>0$ such that
	\begin{equation}\label{eq79}
		F(z,x)\leq\frac{\epsilon}{p}x^p+c_{22}x^r\ \mbox{for almost all}\ z\in\Omega,\ \mbox{and for all}\ x\geq 0.
	\end{equation}
	
	Then from (\ref{eq78}) and (\ref{eq79}), we have
	$$w_\lambda(u)\geq\frac{1}{p}[\mu(u^-)+\eta||u^-||^p_p]+\frac{1}{p}[\mu(u^+)-(\lambda+\epsilon)||u^+||^p_p]-c_{22}||u^+||^r_r\ (\mbox{see (\ref{eq79})}).$$
	
	Choosing $\epsilon\in(0,\hat{\lambda}_1-\lambda)$, we have
	\begin{eqnarray*}
		&&w_\lambda(u)\geq c_{23}||u||^p-c_{24}||u||^r\ \mbox{for some}\ c_{23},c_{24}>0,\\
		&\Rightarrow&u=0\ \mbox{local minimizer of}\ w_\lambda(\cdot)\ (\mbox{recall that}\ r>p).
	\end{eqnarray*}
	
	So, we can find $\rho\in(0,1)$ small such that
	\begin{equation}\label{eq80}
		w_\lambda(0)=0<\inf\{w_\lambda(u):||u||=\rho\}=m_\lambda
	\end{equation}
	(see Aizicovici, Papageorgiou \& Staicu \cite{1}, proof of Proposition 29).
	
	Also, hypothesis $H(f)'_2(ii)$ implies that if $u\in D_+$, then
	\begin{equation}\label{eq81}
		w_\lambda(tu)\rightarrow-\infty\ \mbox{as}\ t\rightarrow+\infty.
	\end{equation}
	
	Finally, from the proof of Proposition \ref{prop23} (see the claim), we know that
	\begin{equation}\label{eq82}
		w_\lambda(\cdot)\ \mbox{satisfies the C-condition}.
	\end{equation}
	
	Then (\ref{eq80}), (\ref{eq81}), (\ref{eq82}) permit the use of Theorem \ref{th1} (the mountain pass theorem) and produce $u_\lambda\in W^{1,p}(\Omega)$ such that
	\begin{eqnarray*}
		&&u_\lambda\in K_{w_\lambda}\subseteq D_+\cup\{0\}\ (\mbox{see the proof of Proposition \ref{prop8}})\\
		&&w_\lambda(0)=0<m_\lambda\leq w_\lambda(u_\lambda).
	\end{eqnarray*}
	
	Therefore $u_\lambda\in D_+$ is a positive solution of \eqref{eql}, hence $\lambda\in\mathcal{L}$.
\end{proof}

{\rm So, for problem \eqref{eql} we can state the following theorem covering the case of a $(p-1)$-superlinear perturbation.}

\begin{theorem}\label{th29}
	If hypotheses $H(\xi),H(\beta),H(f)'_2$ hold, then
	\begin{itemize}
		\item[(a)] for every $\lambda\geq\hat{\lambda}_1=\hat{\lambda}_1(p,\xi,\beta)$, problem \eqref{eql} has no positive solutions;
		\item[(b)] for every $\lambda<\hat{\lambda}_1$ problem \eqref{eql} has at least two positive solutions
		$$u_\lambda,\ \hat{u}_\lambda\in D_+,\ u_\lambda\leq\hat{u}_\lambda,\ u_\lambda\neq\hat{u}_\lambda;$$
		\item[(c)] for every $\lambda<\hat{\lambda}_1$ problem \eqref{eql} has a smallest positive solution $\bar{u}_\lambda\in D_+$ and the map $\lambda\mapsto\bar{u}_\lambda$ from $\mathcal{L}=(-\infty,\hat{\lambda}_1)$ into $C^1(\overline{\Omega})$ is
		\begin{itemize}
			\item[$\bullet$] strictly increasing (that is, $\vartheta<\lambda<\hat{\lambda}_1\Rightarrow\bar{u}_\lambda-\bar{u}_\vartheta\in {\rm int}\, C_+$);
			\item[$\bullet$] left continuous.
		\end{itemize}
	\end{itemize}
\end{theorem}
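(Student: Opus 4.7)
\begin{proof-sketch}
The plan is to assemble Theorem \ref{th29} as an immediate consequence of three lines of argument already developed in the paper, specialized to $a(y) = |y|^{p-2}y$ and $f$ satisfying $H(f)_2'$. The key point is that in this setting the abstract critical parameter $\lambda^*$ coincides with the principal eigenvalue $\hat\lambda_1 = \hat\lambda_1(p,\xi,\beta)$, so that $\mathcal{L} = (-\infty, \hat\lambda_1)$.

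For part $(a)$, I would simply invoke Proposition \ref{prop17}: the Picone identity argument used there applies verbatim under $H(f)_2'$, since we are only using that $f(z,0)=0$, $f(z,x) > 0$ for $x > 0$, and a subcritical growth bound, all of which are contained in $H(f)_2'(i),(iv)$ together with the positivity assumption. This gives $S_\lambda = \emptyset$ for every $\lambda \geq \hat\lambda_1$, hence $\lambda^* \leq \hat\lambda_1$. For part $(b)$, I would first apply Proposition \ref{prop28} to produce one positive solution $u_\lambda \in D_+$ for every $\lambda < \hat\lambda_1$, showing in particular that $(-\infty,\hat\lambda_1) \subseteq \mathcal{L}$, and so $\lambda^* = \hat\lambda_1$. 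Then to obtain the second positive solution $\hat u_\lambda$, I would take $u_\lambda$ to be the minimal solution (using Proposition \ref{prop24}$(c)$ applied to the $p$-Laplacian) and run the truncation-mountain pass argument of Proposition \ref{prop26}: introduce
$$
k_\lambda(z,x) = \begin{cases} (\lambda+\eta)u_\lambda(z)^{p-1} + f(z,u_\lambda(z)) & \text{if } x \leq u_\lambda(z), \\ (\lambda+\eta)x^{p-1} + f(z,x) & \text{if } u_\lambda(z) < x, \end{cases}
$$
form the corresponding $C^1$-functional $j_\lambda$, observe that $u_\lambda$ becomes a local $C^1$-minimizer (hence a local $W^{1,p}$-minimizer by Proposition \ref{prop6}), verify the mountain pass geometry via $H(f)_2'(ii)$, and verify the C-condition as in the Claim inside the proof of Proposition \ref{prop23}. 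Theorem \ref{th1} then delivers $\hat u_\lambda \in K_{j_\lambda} \subseteq [u_\lambda)\cap C^1(\overline\Omega)$ with $\hat u_\lambda \notin \{u_\lambda\}$, and the truncation together with nonlinear regularity forces $\hat u_\lambda \in D_+$.

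For part $(c)$, I would directly transcribe Proposition \ref{prop24}: the downward-directedness of $S_\lambda$ together with Lemma 3.10 of Hu--Papageorgiou yields a minimal element $\bar u_\lambda \in D_+$ (as in Proposition \ref{prop13}); strict monotonicity $\vartheta < \lambda \Rightarrow \bar u_\lambda - \bar u_\vartheta \in \mathrm{int}\,C_+$ follows from the strong comparison principle (Proposition \ref{prop5}) combined with the monotonicity condition $H(f)_2'(v)$ and the fact that $\bar u_\vartheta \in D_+$ has a positive infimum, exactly as in Proposition \ref{prop14}$(a)$; and left continuity is proved via the standard compactness chain: a priori $L^\infty$ bound from Papageorgiou--R\u adulescu \cite{20}, $C^{1,\tau}$-bound from Lieberman \cite{12}, compact embedding $C^{1,\tau}(\overline\Omega) \hookrightarrow C^1(\overline\Omega)$, passage to the limit, and ruling out $\bar u_\lambda$ being strictly dominated by the limit using part $(a)$ of the monotonicity, as in Proposition \ref{prop14}$(b)$.

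The only genuine step requiring thought is the identification $\lambda^* = \hat\lambda_1$; once Propositions \ref{prop17} and \ref{prop28} are in hand, the rest is bookkeeping combining Propositions \ref{prop24}, \ref{prop25}, \ref{prop26}. The main obstacle is the mountain pass step producing the second solution, where one must carefully check that the C-condition proof of Proposition \ref{prop23} goes through with the truncated reaction $k_\lambda$; but since $k_\lambda(z,\cdot)$ agrees with the original reaction above $u_\lambda$ and is bounded below it, the superlinearity and the quasi-monotonicity hypothesis $H(f)_2(iii)$ (which is part of $H(f)_2'$) drive the argument exactly as before. This completes the plan.
\end{proof-sketch}
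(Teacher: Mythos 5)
Your proposal is correct and follows essentially the same route as the paper, which presents Theorem \ref{th29} as a direct assembly of Proposition \ref{prop17} (nonexistence for $\lambda\geq\hat\lambda_1$, valid under $H(f)'_2$ since $r<p^*$ and $f(z,x)>0$ for $x>0$), Proposition \ref{prop28} (existence for $\lambda<\hat\lambda_1$, giving $\mathcal{L}=(-\infty,\hat\lambda_1)$ and hence $\lambda^*=\hat\lambda_1$), and Propositions \ref{prop24} and \ref{prop26} for the minimal solution, its monotonicity and left continuity, and the second solution via the truncation--mountain pass argument. Your identification of $\lambda^*=\hat\lambda_1$ as the only step requiring new input, with the rest being bookkeeping, matches the paper's presentation exactly.
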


\medskip
{\bf Acknowledgements.} This research was supported by the Slovenian Research Agency Grants
P1-0292, J1-8131, J1-7025, N1-0064, and N1-0083. V.D.~R\u adulescu acknowledges the support through a grant of the Romanian Ministry of Research and Innovation, CNCS--UEFISCDI, project number PN-III-P4-ID-PCE-2016-0130,
within PNCDI III.

\end{document}